\numberwithin{equation}{section}
\def\phi{\varphi}
\def\R{\mathbb R}
\def\N{\mathbb N}
\newcommand{\ch}{\mathcal{H}}
\newcommand{\cm}{\mathcal{M}}
\newcommand{\cd}{\mathcal{D}}
\newcommand{\eps}{\varepsilon}
\newcommand{\ra}{\rightarrow}
\newcommand{\be}{\begin{equation}}
\newcommand{\ee}{\end{equation}}
\newcommand{\p}{\partial}
\newcommand{\res}{\ensuremath{\,\textsf{\small \upshape L}\,}}
\newtheorem{theorem}{Theorem}[section]
\newtheorem{lemma}[theorem]{Lemma}
\newtheorem{corollary}[theorem]{Corollary}
\newtheorem{proposition}[theorem]{Proposition}
\theoremstyle{definition}
\newtheorem{definition}[theorem]{Definition}
\theoremstyle{remark}
\newtheorem{remark}[theorem]{Remark}
\numberwithin{equation}{section}
\def\uclhome{@ucl.ac.uk}
\def\stanfordhome{@stanford.edu}
\begin{document}

\title[Mean Curvature Flow  with triple Edges]{A local regularity theorem for Mean Curvature Flow  with triple Edges}
\author{Felix Schulze}
\address{Felix Schulze: 
  Department of Mathematics, University College London, 25 Gordon St,
  London WC1E 6BT, UK}
\curraddr{}
\email{f.schulze\uclhome}

\author{Brian White}
\thanks{\noindent The research of the first author was partially supported by NSF grant DMS--1440140 while the author was in residence at the Mathematical Sciences Research Institute in Berkeley, California during the Spring 2016 semester. The research of the second author was partially supported by NSF grant DMS--1404282.}
\address{Brian White: Department of Mathematics, Stanford University,
  Stanford, CA 94305, USA}
\curraddr{}
\email{bcwhite\stanfordhome}
\subjclass[2000]{}

\dedicatory{}

\keywords{}

\begin{abstract} Mean curvature flow of clusters of
  $n$-dimensional surfaces in $\R^{n+k}$ that meet in triples at
  equal angles along smooth edges and higher order junctions on lower dimensional faces
  is a natural extension of classical mean curvature
  flow. We call such a flow a mean curvature flow with
  triple edges. We show that if a smooth mean curvature flow with
  triple edges is weakly close to a
  static union of three $n$-dimensional unit density half-planes, then
  it is smoothly close. Extending the regularity result to a class of
  integral Brakke flows, we show that this implies smooth short-time
  existence of the flow starting from an initial surface cluster that has
  triple edges, but no higher order junctions.
\end{abstract}

\maketitle

\section{Introduction}
We consider smooth families $\cm = \cup_{t\in I} M_t \times \{t\}$ of
$n$-dimensional surface clusters in $\R^{n+k}$. Such a cluster has the form
\[
    M_t=  \bigcup_{i=1}^N M^i_t,
\]
where  each $M^i_t$ is the image of a smooth family of embeddings $X^i_t:P^i \rightarrow \R^{n+k}$, where $P^i \subset \R^n$ is a bounded, open, convex polytope. We assume that $X^i_t$ extends to a smooth family of immersions into $\R^{n+k}$ of an open neighbourhood $U^i \subset  \R^n$ of $\overline{P}^i$. We assume further that the $M^i_t$ are disjoint away from their boundaries and that they meet in triples
at equal angles along their $(n-1)$-dimensional faces and along each $l$-dimensional face, for $0\leq l \leq n-2$, modelled on a stationary polyhedral cone with an $l$-dimensional translational symmetry. We call the $(n-1)$-faces where three sheets meet edges and lower dimensional faces where necessarily more than three sheets meet higher order junctions.

We say that $\cm = \cup_{t\in I} M_t \times \{t\}$ solves mean
curvature flow, if, given the parametrisation $X_t =\cup_{i=1}^n X^i_t$ of the moving
cluster, the velocity vector satisfies
\[
\Big(\frac{\partial}{\partial t}X\Big)^\perp = \vec{H}, 
\]
where ${}^\perp$ is the projection onto the normal space along each
sheet and $\vec{H}$ its mean curvature vector. Along the edges and higher order junctions, we
require that this holds for each sheet separately. 

We denote the backwards parabolic cylinder with
radius $r$, centred at a space-time point $X= (x,t) \in \R^{n+k}\times \R$, by
$$C_r(X) =  B_r(x)\times (t-r^2,t)\ .$$
We will write $O$ to denote the origin $(0,0)$ in space-time.

\begin{theorem}
  \label{theorem1}
Let $\cm^j$ be a sequence of smooth, $n$-dimensional mean curvature flows
with triple edges in $\R^{n+k}$ that converge as Brakke flows to
a static union of 3 unit density $n$-dimensional half-planes in $C_2(O)$. Then the convergence is smooth in $C_1(O)$.
\end{theorem}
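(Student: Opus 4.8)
The plan is to run a compactness argument in two stages: first to upgrade the weak (Brakke) convergence to uniform $C^1$-graphical convergence of three sheets over the three model half-planes, \emph{including up to the triple edge}, and then to bootstrap graphicality to smooth convergence by means of the parabolic regularity theory for the linearized coupled system. Write $\Sigma = \R^{n-1}\times\{0\}\subset\R^{n+k}$ for the spine along which the model half-planes $H^1,H^2,H^3$ meet. Huisken's monotonicity formula, together with upper semicontinuity of Gaussian density under Brakke convergence and the hypothesis, gives that for $j$ large the Gaussian density ratios $\Theta(\cm^j,X,r)$ are everywhere at most $3/2+\ve_j$ with $\ve_j\to 0$, for $X$ near $O$ and $r$ small; Brakke's clearing-out lemma then gives Hausdorff convergence of $\supp\cm^j$ to the model in $C_{2-\sigma}(O)$. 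Away from $\Sigma$, White's local regularity theorem applies to each piece and yields smooth graphical convergence of exactly three sheets. Since the Gaussian density of a smooth mean curvature flow with triple edges equals $1$ at a regular point and $3/2$ at an edge point, while at a higher-order junction it exceeds $3/2$ by a fixed amount (by the classification of the admissible stationary models, or a compactness/density-gap argument for them), the bound $\Theta(\cm^j,\cdot,\cdot)\le 3/2+\ve_j$ forces, for $j$ large, that $\cm^j$ has no higher-order junction in $C_{2-\sigma}(O)$ and that every edge point there has density exactly $3/2$.

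It remains to control $\cm^j$ in a \emph{full} neighbourhood of $\Sigma\times(-4,0)$. I would argue by contradiction via a blow-up at the point of worst curvature, measured by a parabolically scale-invariant quantity that degenerates at the parabolic boundary of $C_{3/2}(O)$. If the supremum of this weighted curvature over $C_{3/2}(O)$ did not tend to $0$, rescaling at a near-maximum point produces (using interior estimates for smooth triple-edge flows to pass the curvature bound to the limit and get smooth subconvergence) a non-flat, ancient, smooth mean curvature flow of bounded curvature, either without edges or with a single triple edge, and with Gaussian density $\le 3/2$ everywhere. A monotonicity/Brakke-regularity rigidity argument — the triple-edge analogue of White's rigidity for ancient flows of density $\le 1+\ve$ — then forces it to be a static plane or a static union of three half-planes, contradicting non-flatness. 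One must also rule out that additional sheets or incipient higher-order junctions of $\cm^j$ accumulate along $\Sigma$ (a sheet confined to an arbitrarily thin slab about $\Sigma$ cannot occur by the monotonicity formula and the count of sheets off $\Sigma$). With uniform curvature bounds in place, Hausdorff-closeness upgrades to: for $j$ large, $\supp\cm^j\cap C_{4/3}(O)$ is the union of three sheets, each a normal graph $u^i_j$ over $H^i$ with $\|u^i_j\|_{C^1}\to 0$, the three graphs meeting along a common perturbed edge at angles tending to $120^\circ$.

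For the final bootstrap I would linearize. The graph functions $u^i_j$ satisfy on $H^i$ a quasilinear parabolic equation that is a small perturbation of the heat equation, and along the common perturbed edge they satisfy the nonlinear matching and equal-angle conditions, which linearize to a boundary system for $(u^1,u^2,u^3)$ on $(H^1,H^2,H^3)$. This linear system satisfies the Lopatinski--Shapiro (complementing) condition: concretely, the component symmetric in the three sheets satisfies a Neumann condition and extends by even reflection across $\Sigma$ to a solution of the heat equation on $\R^n$, while the complementary components satisfy Dirichlet-type conditions and extend by odd reflection, so the full Schauder and interior-estimate theory is available. A standard excess-decay iteration then applies: at each scale the suitably rescaled $u^i_j$ are $C^1$-close to a solution of the linear system, whose bounded (or subquadratic) ancient solutions are affine, which yields geometric decay of the excess, hence uniform $C^{1,\alpha}$ and then, by repeated differentiation of the system, uniform $C^\infty$ bounds — now on all of $C_1(O)$. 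Consequently $u^i_j\to 0$ in $C^\infty$, which is the asserted smooth convergence in $C_1(O)$.

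I expect the main obstacle to be the second stage: upgrading weak Brakke closeness to uniform curvature bounds up to the triple edge. There is no off-the-shelf regularity theorem at triple-edge points, the rigidity statement for ancient triple-edge flows of low density must itself be established, and pinning down the location of the perturbed edge while excluding extra sheets or nascent higher-order junctions accumulating at $\Sigma$ is the delicate point; once this is done, the linear theory and the iteration are essentially routine.
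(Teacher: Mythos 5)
Your outline tracks the paper's argument closely in its broad structure: density bounds via monotonicity and upper semicontinuity, White's interior regularity theorem away from the spine, a density-gap exclusion of higher-order junctions, a blow-up at a point of ``worst'' scale-invariant badness to get uniform bounds up to the edge, and finally Schauder-type estimates exploiting the matching and $120^\circ$ conditions along the edge. You also correctly identify the real crux: the regularity-improvement step at the edge, which cannot be pulled off the shelf.

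The differences are in implementation. The paper avoids an excess-decay/Campanato iteration by a caloric-extension trick (Proposition~\ref{mainprop}): write each sheet as $G^i = X^i + u^i$ where $X^i$ solves the linear heat equation with the \emph{same} initial and lateral boundary data as the given parametrisation; standard interior Schauder then forces $u^i\to 0$ in $C^{2,\alpha}$, and the matching/equal-angle relations along the edge are then used to force $X^i\to \operatorname{id}_H$. The contradiction in the blow-up is then obtained not from a rigidity theorem about ancient flows of low density, but by normalising $\tilde K_{2,\alpha}(\cm^j,O)=\delta$ (small, not $1$) and using Corollary~\ref{lem:maincor} to show $\tilde K_{2,\alpha}(\cm^j,O)\to 0$. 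Your version normalises to curvature $1$ and invokes a rigidity statement for ancient triple-edge flows of density $\le 3/2$; this can be made to work, but that rigidity is essentially equivalent in difficulty to the step the paper isolates as Proposition~\ref{mainprop}, so you have not genuinely shortened the route, just moved the weight around. Your description of the reflection structure is also slightly off: in the $x_1x_{n+1}$-plane (the directions along which the three sheets fan out) it is the \emph{symmetric} combination $\sum_i X^i_{n+1}$ that vanishes on the edge (Dirichlet), while the \emph{differences} $X^i_{n+1}-X^j_{n+1}$ satisfy a Neumann condition from the angle constraint; only in the extra normal directions $e_{n+l}$, $l\ge 2$, does the assignment you wrote (symmetric $\Rightarrow$ Neumann, antisymmetric $\Rightarrow$ Dirichlet) hold. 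This does not affect the feasibility of the reflection argument, but it matters for carrying it out. Overall: same skeleton, and you have correctly flagged where the labour is; the paper's choice to encode both curvature and edge regularity in a single $\tilde K_{2,\alpha}$ quantity, combined with the caloric-extension trick, is what makes its version of the blow-up clean.
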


We also consider the class of integral Brakke flows that are
$Y$-regular in the following sense: if $P$ is a space-time point with Gaussian density one, 
or with a tangent  flow consisting of a static union of 3 unit density half-planes,
then $P$ has a space-time neighbourhood in which the flow is smooth. The
above theorem remains true for $Y$-regular flows: see Theorem \ref{Y-regular-theorem}.
We also show that the class of $Y$-regular flows is closed under weak convergence: 
see Corollary \ref{Y-regular-corollary}.

Combining this with Ilmanen's elliptic regularisation scheme, we show:

\begin{theorem}\label{short-time-existence-theorem} Let $M_0$ be a smooth, compact $n$-surface cluster in
$\R^{n+k}$ without higher order junctions, i.e. a finite union of compact manifolds-with-boundary
that meet each other at 120 degree angles along their smooth
boundaries. Then there exists a $T>0$
and a smooth solution to mean curvature flow with triple junctions
$(M_t)_{0<t<T}$ such that $M_t \ra M_0$ in $C^1$, and in $C^\infty$ away
from the triple junctions. 
\end{theorem}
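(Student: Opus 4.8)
The plan is to obtain $(M_t)_{0<t<T}$ as the smooth part, near the initial time, of an integral Brakke flow produced by Ilmanen's elliptic regularisation, and then to promote weak regularity to smooth regularity by means of the $Y$-regular version of Theorem~\ref{theorem1} (Theorem~\ref{Y-regular-theorem}) together with Corollary~\ref{Y-regular-corollary}. Since the sheets of $M_0$ meet in triples, the natural measure-theoretic setting is that of flat chains with coefficients in $\mathbb{Z}_3$. Running Ilmanen's scheme in this class: for each $\eps>0$ one minimises the weighted area $\int e^{-x_{n+k+1}/\eps}\,d\ch^n$ among $\mathbb{Z}_3$-chains in $\R^{n+k}\times\R$ asymptotic to the cylinder $M_0\times\R$, equivalently one produces a translating soliton $\widetilde{M}^\eps\subset\R^{n+k}\times\R$ moving with velocity $-e_{n+k+1}/\eps$. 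The induced family $\cm^\eps=\big(t\mapsto\pi(\widetilde{M}^\eps+\eps^{-1}t\,e_{n+k+1})\big)$, with $\pi$ the projection deleting the last coordinate, is an integral Brakke flow, and by the existence and interior regularity theory for such weighted minimisers it is smooth with triple edges away from a lower-dimensional set of higher order junctions. In particular $\cm^\eps$ is $Y$-regular: where the Gaussian density is one it is smooth by Brakke's theorem; where the tangent flow is a static union of three half-planes it is smooth by the regularity of weighted area-minimising $\mathbb{Z}_3$-chains at flat triple junctions; and at the remaining points the $Y$-regularity requirement is vacuous.

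Letting $\eps\to 0$, Ilmanen's compactness for Brakke flows together with the compactness of flat chains mod $3$ yields (after passing to a subsequence) an integral Brakke flow $\cm=\cup_{t\ge 0}M_t\times\{t\}$ in $\R^{n+k}$ with $M_t\to M_0$ as integral varifolds, of unit density on each sheet, as $t\downarrow 0$; by construction there is no loss of mass at the initial time. By Corollary~\ref{Y-regular-corollary}, $\cm$ is again $Y$-regular.

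Now fix $x\in M_0$. As $M_0$ is smooth with triple edges and no higher order junctions, its tangent cone at $x$ is an $n$-plane if $x$ is a manifold point and a static union of three $n$-dimensional half-planes if $x$ lies on an edge; since $\cm$ attains the initial value $M_0$, the corresponding static flow is the tangent flow of $\cm$ at $(x,0)$ (for manifold points this just records that the Gaussian density there equals one). Hence, by $Y$-regularity, $(x,0)$ has a spacetime neighbourhood in which $\cm$ is a smooth mean curvature flow with triple edges. Covering the compact set $M_0\times\{0\}$ by finitely many such neighbourhoods produces a $T>0$ for which $\cm$ restricted to $t\in(0,T)$ is a smooth mean curvature flow with triple edges; its time slices are the asserted family $(M_t)_{0<t<T}$. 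The quantitative smooth estimates supplied by Theorem~\ref{Y-regular-theorem} near the edges, and by Brakke's theorem at manifold points, show that each sheet of $M_t$ is a normal graph over the corresponding (slightly extended) sheet of $M_0$ whose graph function tends to $0$ in $C^1$ up to the edges and in $C^\infty_{\mathrm{loc}}$ away from them, which is exactly the claimed convergence $M_t\to M_0$.

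The substantive difficulties are concentrated in the first step. One must carry out the elliptic regularisation so that it genuinely respects the triple-junction structure — the correct algebraic framework, the correct cylindrical condition at infinity, existence of the translating solitons, and the verification (following Ilmanen) that the $\eps\to 0$ limit is a true Brakke flow with initial datum exactly $M_0$ — and one must establish the regularity that makes each $\cm^\eps$, and hence the limit, $Y$-regular; in particular the smoothness of weighted area-minimising mod-$3$ chains near flat triple junctions in the relevant dimension and codimension. A second, more technical point is that Theorem~\ref{Y-regular-theorem} is stated at interior spacetime points, so to apply it at the initial time $t=0$ one either needs a version valid up to the initial boundary, or one works instead at small positive times $t_0$ and uses continuity of the flow at $t=0$ to see that $\cm$ is weakly close to a static triple junction on the relevant parabolic cylinders; either way one must make the resulting estimates uniform over the finite cover in order to extract the stated $C^1$ and $C^\infty$ convergence.
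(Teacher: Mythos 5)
Your outline follows the paper's own strategy — elliptic regularisation with flat chains mod $3$, $Y$-regularity of the approximating translators and hence of the limit, then regularity near $t=0$ — but it leaves the most substantive step as an acknowledged gap rather than filling it, and it misses a second issue entirely.

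The central gap is exactly the point you flag at the end: $Y$-regularity of the limit flow is established only for $t>0$, and the notion of ``tangent flow at $(x,0)$'' that you invoke is not the one appearing in the definition of $Y$-regularity, since the flow does not exist for $t<0$ and a Brakke flow is in general not backwards-unique or even backwards-extendable from its initial measure. Saying ``since $\cm$ attains the initial value $M_0$, the corresponding static flow is the tangent flow of $\cm$ at $(x,0)$'' therefore does not follow from, and cannot be fed into, the $Y$-regular property. The paper closes precisely this gap with Proposition~\ref{initial-Y-lemma}: assuming the conclusion fails, one takes spacetime points $(x_i,t_i)$ with $t_i\to 0$ at which $K(\cm,(x_i,t_i))|t_i|^{1/2}$ stays bounded away from $0$, parabolically rescales by $1/\sqrt{t_i}$ to a flow on $[-1,\infty)$ with the initial cluster pulled back to a plane or a $Y$-cone at $t=-1$, shows (using $Y$-regularity to exclude sudden vanishing, and the openness of the set of times with a triple junction together with monotonicity about such a point) that the limit is genuinely static, and then applies Theorem~\ref{Y-regular-theorem} to see that the rescaled flows converge smoothly, contradicting the curvature lower bound. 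Your proposed alternative — ``work at small positive $t_0$ and use continuity at $t=0$'' — is circular: one has no a priori control that $\cm$ at small positive times is $C^2$-close to a plane or $\mathcal{Y}$ on the relevant cylinders, as that is exactly what must be proved. The second omission is orientability: to run elliptic regularisation over $\mathbb{Z}_3$ one must realise $M_0$ as a mod-$3$ \emph{cycle}, which requires choosing orientations of the sheets so that the three sheets along each edge induce the same orientation there; this can fail. The paper handles this by passing to a double cover $\tilde U\to U$ of a tubular neighbourhood, lifting to an orientable cluster $\tilde M_0$ with a mod-$3$ cycle $\tilde T_0$ satisfying $\sigma_\#\tilde T_0=-\tilde T_0$, minimising in the $\sigma$-equivariant class, and projecting the resulting flow back down. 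Without something of this sort, the first step of your construction does not get off the ground for non-orientable clusters.
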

In codimension $k=1$, Theorem \ref{theorem1} implies that the solution exists until 
the supremum of the second fundamental form over the cluster blows up,
or until two triple edges collide: see Corollary
\ref{maximal-existence-corollary}.   For surface clusters with higher order junctions (under a topological restriction), we show that there exists a $Y$-regular Brakke flow starting from such a cluster, where the initial cluster is attained in $C^\infty$ away from the junctions and in $C^1$ at the triple junctions: see Theorem \ref{thm:existence_general}. The existence of a Brakke flow starting from such a cluster in codimension $k=1$ has also recently been established by Kim and Tonegawa \cite{KimTonegawa15}. For the flow of networks in arbitrary codimension, i.e.~ the case $n=1$, the assumption of equal angles initially is not necessary, as long as they are positive: see Theorem \ref{short-time-existence-nonregular-network-theorem}.

The corresponding fundamental regularity theorem for smooth mean
curvature flow was proven by the second author \cite{White05}. Mean
curvature flow with triple edges for curves in codimension one is the
network flow. A similar regularity theorem for smooth network flow was shown by Ilmanen and
Neves together with the first author, \cite[Theorem
1.3]{IlmanenNevesSchulze14}. For Brakke flows the fundamental
regularity theorem is due to Brakke, \cite{Brakke}. More recently,
Tonegawa and Wickramasekera have proven the analogous result for
1-dimensional integral Brakke flows close to a static union of three
half lines in the plane, \cite{TonegawaWickramasekera15}.

Smooth short time existence for the network flow was first established
by Mantegazza, Novaga and Tortorelli \cite{MantegazzaNetworks} using PDE
methods, following Bronsard and Reitich \cite{BronsardReitich93}. Short time existence of mean curvature flow with triple
edges, also in the PDE setting, was considered by Freire
\cite{ Freire10b, Freire10a} in
the case of graphical hypersurfaces and by Depner, Garcke and
Kohsaka in \cite{DepnerGarckeKohsaka14} for special hypersurface
clusters. Both the results of Freire and of Depner, Garcke and
Kohsaka require as well that no higher order junctions are present. Short time existence for the planar network flow, starting from an initial network with multiple points, where more than three curves are allowed to meet without a condition on the angles was shown by Ilmanen and
Neves together with the first author, \cite{IlmanenNevesSchulze14}.

{\bf Outline.} In section \ref{section-setup} we clarify the setup and recall some
notation.\\[0.5ex]
Let $\mathcal{Y}$ denote a flow consisting of three nonmoving
halfplanes that meet at equal angles along their common edge.
In section \ref{section-estimates}, we show that if a smooth mean curvature
flow with triple edges is bounded in $C^{2,\alpha}$ and is $C^2$ close to $\mathcal{Y}$
in a spacetime domain, then it is $C^{2,\alpha}$ close to $\mathcal{Y}$ 
in a smaller spacetime domain. We do this by writing the solution as a perturbation of
an approximating solution of the heat equation. We use standard
Schauder estimates for the heat equation to show first that
the perturbation decays in $C^{2,\alpha}$, and use this information,
together with the 120 degree condition along the triple edge, again
using only standard
Schauder estimates for the heat equation, to show
that also the approximating solutions of the heat equation converge in
$C^{2,\alpha}$. We use this, together with a blow up argument (analogous to the
one used in \cite{White05}) to prove Theorem \ref{theorem1}.\\[0.5ex]
In section \ref{section-extension} we extend  Theorem~\ref{theorem1}
to the class of $Y$-regular Brakke flows.
The main ingredient is showing that  a
static union of three unit density half-planes is, up to rotations,
weakly isolated in the space of self-similarly shrinking integral
Brakke flows. \\[0.5ex]
In section \ref{section-short-time} we prove
Theorem \ref{short-time-existence-theorem}, using flat chains mod $3$ in Ilmanen's elliptic
regularisation scheme to get long-time existence, and using the results
from section \ref{section-extension} to get short-time regularity.
The main ingredient is showing that the
Brakke flow constructed via elliptic regularisation has only unit
density static planes and static unions of three unit density half
spaces as tangent flows for some initial time interval.\\[0.5ex]
 In section \ref{section-general-initial-clusters} we prove Theorem \ref{thm:existence_general} by writing the initial cluster as a flat cycle with coefficients in the $(k-1)$-th homology group of the complement with coefficients in $\mathbb{Z}_2$ and adapting the elliptic regularisation scheme to this setting.\\[0.5ex]
In section \ref{section-nonregular-network} we prove Theorem \ref{short-time-existence-nonregular-network-theorem}, showing that for the network flow in arbitrary codimension the assumption of equal angles at the inital triple junctions is not necessary.

\section{Setup and Notation} \label{section-setup}
\noindent For the definition of Brakke flows and an overview of the
fundamental properties we refer the reader to
\cite{Ilmanen94}. It is important to note that the boundary condition
along the faces of the moving surface clusters implies that a smooth
mean curvature flow with triple edges constitutes a Brakke flow. 

Let $\cm^i$ be a sequence of Brakke flows in an open subset $U$ of
space-time. We say that $\cm^i$ converge weakly to a limiting Brakke flow
$\cm$ in $U$ if 
\begin{equation}\label{eq:weak conv}
\int_{-\infty}^\infty \int_{\R^{n+k}} \varphi \, d\mu^i_t\, dt \ra
\int_{-\infty}^\infty \int_{\R^{n+k}} \varphi \, d\mu_t\, dt 
\end{equation}
for any $\varphi \in C^1_c(U;\R)$, where $\{\mu^i_t\}$ and $\{\mu_t\}$ are the
families of Radon measures corresponding to $\cm^i$ and $\cm$,
respectively. This is equivalent to  $\mu^i_t$ converging weakly to  $\mu_t$
for all but a countable set of $t$.  (Of course, by passing to a subsequence,
one can assume that the $\mu^i_t$ converge weakly for all $t$, though
\eqref{eq:weak conv} does not imply that the limit equals  $\mu_t$ for all $t$.) 
The compactness theorem for integral Brakke flows of
Ilmanen, \cite[Theorem 8.1]{Ilmanen94}, guarantees that any sequence
of integral Brakke flows on an open subset $U$ of space-time, with
locally uniformly bounded mass, has a subsequence converging weakly to a
limiting integral Brakke flow.

We denote parabolic rescaling of space-time with a factor $\lambda >0$
by $\cd_\lambda$, that is
$$\cd_\lambda(X)=\cd_\lambda(x,t) = (\lambda x, \lambda^2t)\, .$$
Note that if $\cm$ is a Brakke flow in $U$, then $\cd_\lambda\cm$ is a
Brakke flow in
$\cd_\lambda U$.

Let the $n$-dimensional half-space $H \subset \R^{n+k}$ be given by
$$H:=\{x\in \R^{n+k}\, |\, x_{n+1}= x_{n+2} = \ldots = x_{n+k} = 0\, ,\  x_1\geq 0\}$$
and let $S$ be the counterclockwise rotation in the $x_1x_{n+1}$-plane by
$2\pi/3$ around $0$, extended to be a rotation of $\R^{n+k}$. The union
$$Y:=\cup_{i=1}^3S^{i-1}H$$
is a static configuration of three half-planes meeting along the
subspace 
$$L = \{x\in \R^{n+1}\, |\,  x_1= x_{n+1}= x_{n+2} = \ldots = x_{n+k}= 0\}\ .$$ 
We see that $Y$ is a static solution to mean curvature flow with
triple edges. We denote its space-time track by $\mathcal{Y}$. 

In the following, we will use parabolic H\"older norms and the associated spaces  $C^{q,\alpha}$. If clear from the context, we will not explicitly mention that we use parabolic norms. For more details on parabolic H\"older norms see \S 7 in \cite{White05}.

\section{Estimates in the smooth case}\label{section-estimates}

Consider a smooth flow $\cm$ with triple edges in $C_4(O)$. 

\begin{definition}\label{def:closeness} We say that $\cm$ is 
$\varepsilon$-close in $C^{2,\alpha}$ to $\mathcal{Y}$ in $C_4(O)$ if 
 we can decompose $M_t$ in $C_4(O)$ into three sheets 
$$M_t\cap B_4(0) =\cup_{i=1}^3M^i_t =\cup_{i=1}^3S^{i-1}T^i_t, $$
(where $M^i_t=S^{i-1}T^i_t$),
such that 
there exist maps 
\[
    F^i(\cdot,t): H\cap B_4(0) \to \R^{n+k}\, ,
\]
parametrising $T^i_t$ for $-16 < t < 0$, with the following properties:
\begin{enumerate}
\item $F^1$, $SF^2$, and $S^2F^3$ agree along the common edge: 
\[
 \text{$F^1(\cdot,t) = SF^2(\cdot,t)=S^2F^3(\cdot,t)$ on  $L\cap B_4(0)$ for $t\in (-16,0)$} ,
\]
\item\label{item:restriction}
 the restriction of the parametrisation to $L\cap B_3(0)$ is perpendicular to $L$:
\begin{equation*}
\pi_L\big(F^i(x,t) - x\big) = 0 
\text{ for all $x \in L\cap B_3(0),\ i=1,2,3$ and $t\in (-9,0)$} ,
\end{equation*}
where $\pi_L$ is the orthogonal projection onto $L$, and 
\item\label{item:inequality}
$$\|F^i-{id}_H\|_{C^{2,\alpha}((H\cap B_4(0))\times (-16,0)} \leq \varepsilon.$$
\end{enumerate}
Similarly, we say that 
$\cm$ is 
$\varepsilon$-close in $C^{2}$ to $\mathcal{Y}$ in $C_4(O)$
provided the above statements hold with $C^{2}$ in place of $C^{2,\alpha}$. In case that we make a statement about $C^{2}$ and $C^{2,\alpha}$ at the same time, then we assume that this is with respect to the same parametrisation $F^i$. 
\end{definition}

 Note that condition~\eqref{item:restriction} in the definition implies that the triple edge of $\cm$ is written as a graph over the triple edge of $\mathcal{Y}$ in $C_3(O)$. 

Let us assume that we have a flow $\cm$ that is $\varepsilon$-close to $\mathcal{Y}$ in $C^2$ and $\delta$-close in $C^{2,\alpha}$ in $C_4(O)$ for sufficiently small $\varepsilon, \delta >0$. We aim to use the given parametrisations $F^i$ to construct a new parametrisation in $C_3(O)$. We take $X^i: (H\cap B_3(0))\times [-9,0) \ra \R^{n+k}$ to be the solution to the linear heat equation that has the same initial values and boundary values as $F^i$ restricted to $H\cap C_3(O)$:
\begin{align*}
&\frac{\partial}{\partial t}X^i = \Delta^\delta X^i\, ,\\
&X^i(\cdot, -9) =F^i(\cdot, -9)\ ,\\
&X^i(\cdot, t)|_{\partial
  B_3(0)\cup(L\cap B_3(0))} = F^i(\cdot, t)\ ,
\end{align*}
where $\Delta^\delta$ is the Laplacian with respect to the Euclidean metric on $H$. Note that the domain $H\cap B_3(0)$ has corners. Nevertheless away from the corners of the domain, we can assume that $X^i$ is a $C^{2,\alpha}$-diffeomorphism 
onto its image, and that $X^i$ is  $C^2$-close to the identity mapping. 
We parametrise $F^i(H\cap B_3(0),t) \subset T^i_t$ by
$$ G^i(\cdot,t) =X^i(\cdot,t)+ u^i(\cdot, t) = X^i(\cdot,t)+
\sum_{l=1}^{k}u^i_l(\cdot, t)\, e_{n+l} $$
where $u^i:(H\cap B_3(0))\times [-9,0)\ra \{0\}^{n}\times \R^k$. Note that since both $F^i_t$ and $X^i_t$ are $C^2$ close to $id_H$, the maps $u^i$ are uniquely determined by requiring that they map to $\{0\}^{n}\times \R^k$, i.e. we write $T^i_t$ as a graph in $e_{n+1},\ldots, e_{n+k}$ direction over $\text{Im}(X^i_t)$. We can again assume that away from the corners of the domian, $u^i$ is
bounded in $C^{2,\alpha}$ and close in $C^2$ to zero, with boundary
values equal to zero. Similarly $G^i$ is bounded in $C^{2,\alpha}$ and close in $C^2$ to $id_H$, away from the corners of the domain.
\begin{proposition}\label{mainprop}
Let $\mathcal{M}^j$ be a sequence of smooth mean curvature flows with
triple edges in $C_4(O)$ that are $\delta$-close in $C^{2,\alpha}$ to $\mathcal{Y}$, for sufficiently small $\delta>0$, and
that converge in $C^2$ to $\mathcal{Y}$. Then they converge on $C_1(O)$ in
$C^{2,\alpha}$, i.e. in the above parametrisation
$$ \|u^{i,j}\|_{C^{2,\alpha}((H\cap B_1(0))\times [-1,0))} \ra 0\  \text{and}\ \|X^{i,j} -
  \text{id}_{H}\|_{C^{2,\alpha}((H\cap B_1(0))\times [-1,0))} \ra 0\, ,$$
as $j\ra \infty$ for $i=1,2,3$.
\end{proposition}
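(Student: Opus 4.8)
The plan is to follow the structure of the perturbation argument outlined in the introduction: write the moving sheets as graphs over solutions of the linear heat equation, show first that the graphical part $u^{i,j}$ decays in $C^{2,\alpha}$, and then bootstrap this to get convergence of the $X^{i,j}$ themselves. First I would derive the evolution equation satisfied by $G^{i,j} = X^{i,j} + u^{i,j}$ from the mean curvature flow equation for $\mathcal{M}^j$. Since each sheet $T^{i,j}_t$ flows by mean curvature (in the normal direction along each sheet separately), and $X^{i,j}$ solves the flat heat equation $\partial_t X^{i,j} = \Delta^\delta X^{i,j}$ with matching initial and lateral boundary data, the difference $u^{i,j}$ satisfies a uniformly parabolic equation of the form $\partial_t u^{i,j} = a^{i,j}_{kl}\, \partial_k\partial_l u^{i,j} + b^{i,j}\cdot \nabla u^{i,j} + (\text{error})$, where the coefficients are $C^{0,\alpha}$-bounded (from the $\delta$-closeness in $C^{2,\alpha}$) and converge in $C^0$ to the constant-coefficient heat operator (from the $C^2$ convergence to $\mathcal{Y}$), and where the inhomogeneous term is quadratic in the second fundamental form and hence small. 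The zero boundary data for $u^{i,j}$ on $\partial B_3(0) \cup (L\cap B_3(0))$ is built into the construction.

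Next I would apply interior and boundary Schauder estimates for this parabolic equation. Away from the corners of the domain $H\cap B_3(0)$, the $C^{2,\alpha}$ bounds on the coefficients and the vanishing boundary data give, via standard Schauder theory, an estimate of the form $\|u^{i,j}\|_{C^{2,\alpha}(K)} \le C(\|u^{i,j}\|_{C^0} + \|\text{error}\|_{C^{0,\alpha}})$ on compact subsets $K$ of the parabolic interior/lateral boundary. Since $u^{i,j} \to 0$ in $C^0$ (because $G^{i,j}$ and $X^{i,j}$ are both $C^2$-close to $\mathrm{id}_H$ and $\mathcal{M}^j \to \mathcal{Y}$) and the error term, being quadratic in derivatives of the $C^2$-small $G^{i,j}$, also tends to zero, we conclude $\|u^{i,j}\|_{C^{2,\alpha}((H\cap B_2(0))\times[-4,0))} \to 0$. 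In particular the second fundamental forms of the sheets converge to zero in $C^{0,\alpha}$.

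With the graphical part controlled, the remaining task is to show $\|X^{i,j} - \mathrm{id}_H\|_{C^{2,\alpha}} \to 0$. The maps $X^{i,j}$ solve the linear heat equation with smooth coefficients, so interior Schauder estimates reduce this to controlling their initial data $F^{i,j}(\cdot,-9)$ and their lateral boundary data on $\partial B_3(0)\cup (L\cap B_3(0))$ in $C^{2,\alpha}$ — but this we do \emph{not} have uniformly, only in $C^2$. Here is where condition~\eqref{item:restriction} and the 120-degree angle condition enter, exactly as flagged in the outline. The point is that on the triple edge $L$, the three sheets meet $\mathcal{M}^j$ at equal angles; combined with the perpendicularity normalisation~\eqref{item:restriction} and the already-established $C^{2,\alpha}$ smallness of the $u^{i,j}$, the 120-degree condition becomes an oblique-derivative-type boundary condition for the $X^{i,j}$ along $L$ that forces the $X^{i,j}$ to converge in $C^{2,\alpha}$ near $L$; away from $L$ the sheets are single smooth solutions of mean curvature flow weakly close to a plane, so White's regularity theorem \cite{White05} gives $C^{2,\alpha}$ convergence directly. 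Patching these gives $C^{2,\alpha}$ convergence on $C_1(O)$.

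\textbf{Main obstacle.} The genuinely delicate step is the last one: upgrading the $C^2$ convergence of the boundary/initial data of $X^{i,j}$ to $C^{2,\alpha}$ convergence using the 120-degree condition. One must turn the geometric angle condition along $L$ — which couples the three sheets and involves their first derivatives — into a linear oblique boundary condition with $C^{0,\alpha}$-controlled, $C^0$-convergent coefficients for the coupled system $(X^{1,j}, X^{2,j}, X^{3,j})$, and then invoke Schauder estimates for parabolic systems with oblique boundary conditions on a domain with corners (where $L$ meets $\partial B_3(0)$), which is why the estimates are stated only on the smaller cylinder $C_1(O)$. The corner of the domain $H\cap B_3(0)$ is the reason for working on nested cylinders and is the main technical nuisance throughout.
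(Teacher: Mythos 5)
Your outline captures the first half of the paper's argument precisely: derive the parabolic equation $\partial_t u^{i,j} - \Delta^\delta u^{i,j} = E_1 + E_2$ for the graphical perturbation (the paper carries all the nonconstant-coefficient terms and the velocity projection into the right-hand side error $E_1+E_2$ rather than into variable coefficients, but the effect is the same), note that the errors go to zero in $C^{0,\alpha}$ because they are products of $C^{1,\alpha}$-bounded quantities with $C^1$-small ones, and conclude by Schauder that $\|u^{i,j}\|_{C^{2,\alpha}} \to 0$. You also correctly identify that the remaining difficulty is upgrading the boundary data of $X^{i,j}$ from $C^2$ to $C^{2,\alpha}$ using condition~\eqref{item:restriction} and the $120^\circ$ condition.

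However, for that last step your proposal stops at naming the obstacle rather than resolving it, and the route you gesture at (a coupled oblique-derivative Schauder estimate for the system $(X^{1},X^{2},X^{3})$, on a domain with corners) is not what the paper does and would require substantially heavier, and near corners more delicate, machinery. The key idea you are missing is an \emph{algebraic decoupling} using the $\mathbb{Z}/3$ symmetry $S$. The paper splits the geometric boundary conditions into scalar problems that each carry either pure Dirichlet or pure Neumann data on $L$: (a) condition~\eqref{item:restriction} gives $X^i_m = x_m$ on $L$ for $m=2,\dots,n$, a Dirichlet condition with data identically $x_m$; (b) the meeting condition $F^1=SF^2=S^2F^3$ on $L$, together with $\sum_i S^{i-1}e_1 = \sum_i S^{i-1}e_{n+1}=0$, gives that the ``symmetric'' combinations $h=\sum_i X^i_1$ and $v=\sum_i X^i_{n+1}$ (and $X^i_{n+l}-X^j_{n+l}$ for $l\ge2$) solve the heat equation with \emph{zero} Dirichlet data on $L$; (c) the $120^\circ$ condition $\sum_i S^{i-1}N^i = 0$, after expanding the conormal and using the already-proved $C^{2,\alpha}$ smallness of $u^{i,j}$, gives that the ``antisymmetric'' combinations $w^{ij} = X^i_{n+1}-X^j_{n+1}$ (and $\sum_i X^i_{n+l}$ for $l\ge2$) solve the heat equation with Neumann data $\partial_1 w^{ij}$ going to zero in $C^{1,\alpha}$. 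Each of (a), (b), (c) is a \emph{scalar} heat equation on the half-space with either zero Dirichlet or small Neumann data, so only standard half-space Schauder estimates (e.g.\ \cite{Simon97}) are needed. The individual $X^i_m$ are then recovered by linear algebra, e.g.\ $3X^1_{n+1}=v+w^{12}+w^{13}$, and $X^i_1$ along $L$ is determined from $X^i_{n+1}$ by the meeting condition. This decoupling is the content of the proof; without it, the $120^\circ$ condition genuinely couples the three sheets and one is stuck with exactly the system estimate you flag as the obstacle. One smaller inaccuracy worth noting: after $\|u^{i,j}\|_{C^{2,\alpha}}\to0$ you claim the second fundamental forms converge to zero in $C^{0,\alpha}$; at that stage $X^{i,j}$ is only known to converge in $C^2$, so $|A|\to0$ holds in $C^0$ but the $C^{0,\alpha}$ control on $A$ is precisely part of what remains to be proved.
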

\begin{proof}
We consider $T^{i,j}_t$ and drop the indices
$i,j$ for the moment. We can assume that the tangent spaces to $T_t$ are
uniformly close to $\R^n\times \{0\}^{k}$. Since $T_t$ moves by mean
curvature flow we have
$$ \Big\langle \Big(\frac{\partial}{\partial t}  G_t\Big)^\perp,
e_{n+l}\Big\rangle = \big\langle \vec{H},e_{n+l}\big\rangle = \Delta x_{n+l}$$
where $l =1, \ldots, k$ and $\Delta$ is the Laplace-Beltrami operator on $T_t$. This is
equivalent to 
$$\Big\langle \frac{\partial}{\partial t}X + \frac{\partial}{\partial
  t}u, \pi^\perp(e_{n+l})\Big\rangle =
\Delta x_{n+l}\ ,$$
where $\pi^\perp$ is the orthogonal projection to the normal space of
$T_t$. Letting $\pi^*$ be orthogonal projection onto $\{0\}^n\times
\R^{k}$, we can write this as
\begin{equation*}
  \begin{split}
    \frac{\partial}{\partial t} X_{n+l} + \frac{\partial}{\partial
  t}u_{l} &= \Delta x_{n+l} + \Big\langle \frac{\partial}{\partial t}X + \frac{\partial}{\partial
  t}u, (\pi^*- \pi^\perp)(e_{n+l})\Big\rangle
\\
&=: \Delta x_{n+l}+E_{1,l}\ .
  \end{split}
\end{equation*}
Let $g_{ij}$ be the metric induced by $G_t$. We have
\begin{equation*}
  \begin{split}
    \Delta x_{n+l} &= \frac{1}{\sqrt{\det(g)}} \frac{\p}{\p
      x_i}\Big( \sqrt{\det(g)}g^{ij} \frac{\p}{\p
      x_j}(X_{n+l}+u_l)\Big)\\
    &=  g^{ij}\frac{\p^2}{\p x_i\p x_j}(X_{n+l}+u_l) +  \frac{1}{\sqrt{\det(g)}} \frac{\p}{\p
      x_i}\big( \sqrt{\det(g)}g^{ij}\big) \frac{\p}{\p
      x_j}(X_{n+l}+u_l)\\
    &= \Delta^\delta(X_{n+l}+u_l) + (g^{ij}-\delta^{ij})\frac{\p^2}{\p
      x_i\p x_j}(X_{n+l}+u_l)\\
    &\ \  + \frac{1}{\sqrt{\det(g)}} \frac{\p}{\p
      x_i}\big( \sqrt{\det(g)}g^{ij}\big) \frac{\p}{\p
      x_j}(X_{n+l}+u_l)\\
    &=: \Delta^\delta(X_{n+l}+u_l) + E_{2,l},
  \end{split}
\end{equation*}
where $\Delta^\delta$ is the standard laplacian on $H$. This yields
$$ \frac{\partial}{\partial t} X_{n+l} + \frac{\partial}{\partial
  t}u_l =  \Delta^\delta(X_{n+l}+u_l)+ E_{1,l} + E_{2,l}\ ,$$
and thus
\begin{equation}\label{eq:est.1}
\frac{\partial}{\partial t}u_l - \Delta^\delta u_l = E_{1,l} + E_{2,l}\ ,
\end{equation}
for $l= 1,\ldots, k$. 
Note we can estimate, where $[\cdot]_\alpha$ and $\|\cdot\|_0$
are the parabolic H\"older half-norm and the sup-norm on $C_{5/2}(O)$,
that

 $$   [E_{1,l}]_\alpha \leq \big[\tfrac{\partial}{\partial t}X + \tfrac{\partial}{\partial
  t}u\big]_\alpha \big\|\pi^*-\pi^\perp \big \|_0
    + \big\|\tfrac{\partial}{\partial t}X + \tfrac{\partial}{\partial
  t}u\big\|_0 \big[\pi^*-\pi^\perp\big]_\alpha $$

 and
\begin{equation*}
  \begin{split}
   [E_{2,l}]_\alpha &\leq \big[g^{ij}-\delta^{ij}\big]_\alpha \big\|\tfrac{\p^2}{\p
     x_i\p x_j}(X_{n+l}+u_l)\big\|_0
    + \big\|g^{ij}-\delta^{ij}\big\|_0 \big[\tfrac{\p^2}{\p
     x_i\p x_j}(X_{n+l}+u_l)\big]_\alpha \\
    &\ \ \ \ + \Big[\tfrac{1}{\sqrt{\det(g)}} \tfrac{\p}{\p
      x_i}\big( \sqrt{\det(g)}g^{ij}\big)\Big]_\alpha \big\|\tfrac{\p}{\p
      x_j}(X_{n+l}+u_l)\big\|_0 \\
&\ \ \ \ + \Big\|\tfrac{1}{\sqrt{\det(g)}} \tfrac{\p}{\p
      x_i}\big( \sqrt{\det(g)}g^{ij}\big)\Big\|_0\big[\tfrac{\p}{\p
      x_j}(X_{n+l}+u_l)\big]_\alpha\ .
  \end{split}
\end{equation*}
Similarly
\begin{equation*}
      \|E_{1,l}\|_0 \leq \big\|\tfrac{\partial}{\partial t}X + \tfrac{\partial}{\partial
  t}u\big\|_0 \big\|\pi^*-\pi^\perp \big \|_0
\end{equation*}
and
\begin{equation*}
 \begin{split}
\|E_{2,l}\|_0 &\leq \big\|g^{ij}-\delta^{ij}\big\|_0 \big\|\tfrac{\p^2}{\p
     x_i\p x_j}(X_{n+l}+u_l)\big\|_0\\    
    &\ \ \ \ + \Big\|\tfrac{1}{\sqrt{\det(g)}} \tfrac{\p}{\p
      x_i}\big( \sqrt{\det(g)}g^{ij}\big)\Big\|_0 \big\|\tfrac{\p}{\p
      x_j}(X_{n+l}+u_l)\big\|_0 \ . 
 \end{split}
\end{equation*}
Reintroducing the suppressed indices $i,j$, this implies that 
$$\|E_{1,l}^{i,j}\|_{0,\alpha}, \|E_{2,l}^{i,j}\|_{0,\alpha} \ra 0$$
on  $C_{5/2}(O)$ as $j\ra \infty$, for $i=1,2,3$ and $l=1,\ldots,k$.  Since $u^{i,j}$ converges to 
zero in $C^2$ on $C_{5/2}(O)$, 
this implies by \eqref{eq:est.1} and parabolic Schauder estimates for
the heat equation that 
\begin{equation}\label{eq:u-good}
     \|u^{i,j}\|_{2,\alpha} \ra 0
\end{equation}
as $j\ra \infty$ on $C_2(O)$. 

As above, we denote with $X^i_m$ the $m$-th coordinate function of $X^i$. 
By~\eqref{item:restriction} in Definition \ref{def:closeness}, we have that
\begin{equation*}
   X^i_m(x,t)= x_m
\end{equation*}
for $(x,t) \in (L\cap B_3(0))\times [-9,0)$ and $m=2,\dots,n$.
Standard 
Schauder estimates then imply that 
\begin{equation}
X^i_m(x,t) \ra x_m  \quad  \text{on}\ C_2(O)\cap H\ \text{in}\ C^{2,\alpha}
\end{equation} 
for $m=2,\ldots,n$.

Since $0= \sum_{i=1}^3S^{i-1}(e_1) = \sum_{i=1}^3S^{i-1}(e_{n+1})$ we
have
$$ 0= \sum_{i=1}^3\langle S^{i-1} F^i(x,t),S^{i-1}(e_1)\rangle =
\sum_{i=1}^3\langle F^i(x,t),e_1\rangle =
\sum_{i=1}^3X^i_1(x,t)$$
for $(x,t) \in (L\cap B_3(0))\times [-9,0)$. Similarly
$$\sum_{i=1}^3X^i_{n+1}(x,t) = 0$$
for $(x,t) \in (L\cap B_3(0))\times [-9,0)$. Let us define on
$C_3(O)\cap H$, 
$$h = \sum_{i=1}^3X^i_1\ \ \ \text{and}\ \ \ v =
\sum_{i=1}^3X^i_{n+1}\ .$$
Both functions solve the heat equation on $C_3(O)\cap H$ with zero
boundary values on $C_3(O)\cap \partial H$. Again standard Schauder
estimates for the heat equation imply that $h,v$ are uniformly bounded
in $C^{3,\alpha}$ on $C_2(O)\cap H$ and also
\begin{equation}\label{eq:sumconvergence}
h,v \ra 0 \ \ \text{on}\ C_2(O)\cap H\ \text{in}\ C^{2,\alpha}\
.
\end{equation}
For $l \in \{2, \ldots ,k \}, \ i,j \in \{1,2,3\}, \ i\neq j$
and $(x,t) \in (L\cap B_3(0)) \times [-9,0)$ we have
$$ 0 = F^i_{n+l}(x,t)-F^j_{n+l}(x,t) =  X^i_{n+l}(x,t) -
X^j_{n+l}(x,t) $$
and thus, as above, 
\begin{equation}\label{eq:boundaryhighercodim}
X^i_{n+l} - X^j_{n+l} \ra 0\ \  \text{on}\ C_2(O)\cap H\ \text{in}\ C^{2,\alpha}\
.
\end{equation}

\newcommand{\RR}{\mathbf{R}}

For $x\in H$ let us write
\begin{equation}\label{eq:defpertubation}
G^{i,j}(x,t) = x + \psi^{i,j}(x,t)\ .
\end{equation}
Our assumptions then imply that $\psi^{i,j}$ is bounded in $C^{2,\alpha}$
on $C_4(O)\cap H$ and converges to zero in $C^2$ as $j\ra \infty$. By the previous
estimates we can assume that 
\begin{equation*}
\|\psi^{i,j}_m\|_{C^{2,\alpha}(C_2(O)\cap H)} \ra 0
\qquad
\text{ for } m=2,\ldots, n.
\end{equation*}
 We suppress the index $j$ of the sequence for the rest of the argument.
\newcommand{\pdf}[2]{\frac{\partial #1}{\partial #2}}
 
For $i=1,2,3$, let $N^i$ denote the unit conormal to $T^i$ along the triple edge.  That is, $N^i$ is the unit vector with the following three properties:
it is a linear
combination of the vectors
\[
     \pdf{G^i}{x_m} = e_m + \pdf{\psi^i}{x_m}\qquad(m=1,\dots,n),
\]
it is perpendicular to 
\[
   \pdf{G^i}{x_m} \quad (m=2,\dots,n),
\]
and its inner product with $e_1$ is negative.  Note that $N^i=N^i(\pdf{\psi}{x_1},\dots,\pdf{\psi}{x_n})$ 
is a real analytic function of the $\pdf{\psi^i}{x_j}$.

Expanding $N^i$ as a power series, we have
\begin{align*}
N^i
&=
-e_1  - \sum_{m>n} \bigg\langle \pdf{\psi^i}{x_1}, e_m\bigg\rangle e_m
 +
\sum_{m=2}^n \bigg\langle \pdf{\psi^i}{x_m}, e_1\bigg\rangle e_m
+ 
Q
\end{align*}
where $Q$ denotes terms that are of degree $\ge 2$.  
Since the $\pdf{\psi}{x_j}$ are bounded in $C^{1,\alpha}$ and tend to $0$ in $C^1$, it follows
that $Q$ tends to $0$ in $C^{1,\alpha}$.  Thus
\begin{align}\label{eq:conormal}
N^i
&=
-e_1  - \sum_{m>n} \bigg\langle \pdf{\psi^i}{x_1}, e_m\bigg\rangle e_m
+
\sum_{m=2}^n \bigg\langle \pdf{\psi^i}{x_m}, e_1\bigg\rangle e_m
+ 
E
\end{align}
where $E$ denotes a term that tends to $0$ in $C^{1,\alpha}$.

Now $\sum_{i=1}^3S^{i-1}N^i=0$ since the three sheets meet at equal angles.
Also, 
\[
\sum_{i=1}^3S^{i-1} e_1=0
\]
and $S e_j=e_j$ for all $j$ other than $1$ and $n+1$.

Thus applying $S^i$ to \eqref{eq:conormal} and 
 summing from $i=1$ to $3$ gives
\begin{equation}\label{eq:conormal.1}
\begin{split}
\sum_{i=1}^3 
  \bigg(
   \bigg\langle e_{n+1}, \pdf{\psi^i}{x_1} \bigg\rangle S^{i-1} e_{n+1}
  + 
  \sum_{l=2}^k  &\bigg\langle e_{n+l}, \pdf{\psi^i}{x_1}  \bigg\rangle e_{n+l} \\
  &- 
  \sum_{m=2}^n \bigg\langle e_1, \pdf{\psi^i}{x_m}  \bigg\rangle e_m  \bigg)
  = E.
 \end{split}
\end{equation}

Now take the component of both sides of \eqref{eq:conormal.1} in the $e_{1}$ direction to get:
\[
\frac{\sqrt3}2 \left(  \frac{\partial \psi^2_{n+1}}{\partial x_1} - \frac{\partial \psi^3_{n+1}}{\partial x_1} \right)
=
 E.
\]

More generally, the same argument shows
\begin{equation}
  \label{eq:bdest1}
  \frac{\partial \psi^i_{n+1}}{\partial x_1} - \frac{\partial
    \psi^j_{n+1}}{\partial x_1} = E
    \qquad (i,j\in \{1,2,3\}).
\end{equation} 

For $l\ge 2$, taking the component of both sides of \eqref{eq:conormal.1} in the $e_{n+l}$ direction gives
\begin{equation}
  \label{eq:bdest2}
  \sum_{i=1}^3\frac{\partial \psi^i_{n+l}}{\partial x_1} = E  \qquad ( l=2,\dots, k)
\end{equation}
 on $C_2(O)\cap \partial H$. 

Recall that we have 
$$\psi^i_{n+1} = G^i_{n+1} = X^i_{n+1}+u^i_{n+1} .$$
So \eqref{eq:u-good} and \eqref{eq:bdest1} imply
\begin{equation}
  \label{eq:bdest3}
  \frac{\partial X^i_{n+1}}{\partial x_1} - \frac{\partial
    X^j_{n+1}}{\partial x_1} = E
\end{equation}
for $i\neq j$ on $C_2(O)\cap \partial H$. This implies that the
functions $w^{ij}:= X^i_{n+1}- X^j_{n+1}$ for $i<j$ are solutions to
the heat equation on $C_2(O)\cap H$ with Neumann boundary conditions
$$\frac{\partial w^{ij}}{\partial x_1} = E_{ij}$$
on $C_2(O)\cap\partial H$ with $\|E_{ij}\|_{C^{1,\alpha}}\ra 0$. But
then standard Schauder estimates for the heat equation on a
half-space, see for example Theorem 4 in \cite{Simon97}, imply that
\begin{equation}\label{eq:diffconvergence}\text
{$\|w^{ij}\|_{C^{2,\alpha}}\ra 0$ on $C_{3/2}(O)\cap H$.} 
\end{equation}

Now
\begin{align*}
   3X^1_{n+1} 
   &=   
   (X^1_{n+1} + X^2_{n+1} + X^3_{n+1}) 
   + 
   (X^1_{n+1} - X^2_{n+1}) 
   +
   (X^1_{n+1} - X^3_{n+1}) 
   \\
   &=
   v + w^{12} + w^{13}.
\end{align*}
Thus by \eqref{eq:sumconvergence} and \eqref{eq:diffconvergence}, $X^1_{n+1}$ tends to $0$ in $C^{2,\alpha}$ on $C_{3/2}(O)$.
Similarly $X^2_{n+1}$ and $X^3_{n+1}$ tend to $0$ in $C^{2,\alpha}$ on $C_{3/2}(O)$. Note that the maps $$S^{i-1}(X^i_1(x,t),X^i_{n+1}(x,t))$$ map to the same point in the $x_1x_{n+1}$-plane for $(x,t) \in L\cap B_2(0)\times (-4,0)$. Thus the coordinates $X^1_{n+1}(x,t), X^2_{n+1}(x,t), X^3_{n+1}(x,t)$ determine $X^1_{1}(x,t), X^2_{1}(x,t), X^3_{1}(x,t)$ for $(x,t) \in L\cap B_2(0)\times (-4,0)$  and we obtain
$$\|X^i_1\|_{C^{2,\alpha}}\ra 0\qquad \text{on}\ C_{3/2}(O)\cap \partial H$$  and so 
$$\|X^i_1\|_{C^{2,\alpha}}\ra 0\qquad\text{on}\ C_1(O)\cap H. $$

Similarly, from \eqref{eq:bdest2} we obtain that
$$\Big \|\sum_{i=1}^3 X^i_{n+l}\Big\|_{C^{2,\alpha}} \ra 0 \qquad \text{on}\ C_1(O)\cap H
$$
which implies together with \eqref{eq:boundaryhighercodim} that
 $$\|X^i_{n+l}\|_{C^{2,\alpha}}\ra 0\qquad \text{on}\ C_1(O)\cap H$$
for $i=1,2,3$ and $l=2,\ldots, k$.
\end{proof}

By interpolation, this yields the following corollary.

\begin{corollary}\label{lem:maincor}
  Let $\cm^j$ be a sequence of smooth mean curvature flows with triple
  edges.
  Suppose that the $\cm^j$ are sufficiently close in $C^{2,\alpha}$ to the static
  solution $\mathcal{Y}$ on $C_4(O)$ and that the $\cm^j$ converge as Brakke flows to
  $\mathcal{Y}$. Then the $\cm^j$ converge in $C^{2,\alpha}$ to $\mathcal{Y}$ on
  $C_1(O)$. 
\end{corollary}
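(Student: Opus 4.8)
The plan is to reduce the statement to Proposition~\ref{mainprop}, whose hypotheses differ from those of the corollary only in that there one assumes the $C^2$-convergence $\cm^j\to\mathcal{Y}$ while here one is given merely weak (Brakke) convergence. So the only work is to upgrade the weak convergence to $C^2$-convergence of the sheets, keeping the uniform $C^{2,\alpha}$-closeness, after which Proposition~\ref{mainprop} applies verbatim. ``Interpolation'' refers to the elementary fact that a family bounded in $C^{2,\alpha}$ that converges in $C^0$ also converges in $C^2$, by $\|g\|_{C^2}\le C\,\|g\|_{C^0}^{\theta}\,\|g\|_{C^{2,\alpha}}^{1-\theta}$ for a suitable $\theta\in(0,1)$; the one point that needs care is to arrange that weak convergence of the \emph{surfaces} forces $C^0$-convergence of the \emph{parametrisations}.

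Concretely, I would first fix parametrisations $F^{i,j}$ realising the $\delta$-closeness of $\cm^j$ to $\mathcal{Y}$, and replace them by the parametrisations writing each sheet $T^{i,j}_t$ as a graph over $H$ in the $e_{n+1},\dots,e_{n+k}$-directions. This is permissible on the cylinder $C_3(O)$ (which is all that the proof of Proposition~\ref{mainprop} actually uses), only enlarges $\delta$ by a universal factor, and preserves the hypotheses of Definition~\ref{def:closeness}: condition~\eqref{item:restriction} is automatic for such graphs, and condition~(1) survives because the common triple edge of $\cm^j$ is already a graph over $L$ by the remark following Definition~\ref{def:closeness}. By~\eqref{item:inequality} the reparametrised $F^{i,j}$ are uniformly bounded in $C^{2,\alpha}$, so by Arzel\`a--Ascoli every subsequence has a further subsequence along which $F^{i,j}\to F^i_\infty$ in $C^2$ (indeed in $C^{2,\beta}$ for every $\beta<\alpha$).

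Next I would identify $F^i_\infty$. Since $C^1$-convergence of the $F^{i,j}$ gives convergence of the induced $n$-area measures, the hypothesis $\mu^j_t\to\ch^n\res Y$ forces $\sum_{i=1}^3 S^{i-1}\big((F^i_\infty(\cdot,t))_*(\ch^n\res(H\cap B_3(0)))\big)=\ch^n\res Y$ on $B_3(0)$, for every $t$. As the three model half-planes $S^{i-1}H$ are pairwise transverse along $L$ and each limit sheet is a $C^{2,\alpha}$-small graph over one of them, each limit sheet must be the flat half-plane itself; for graphical parametrisations this forces $F^i_\infty=\mathrm{id}_H$. Hence every subsequential $C^2$-limit is $\mathrm{id}_H$, so $F^{i,j}\to\mathrm{id}_H$ in $C^2$ on $C_3(O)$, which is exactly the interpolation conclusion of the first paragraph.

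At this point $\cm^j$ is $\delta$-close in $C^{2,\alpha}$ to $\mathcal{Y}$ and converges in $C^2$ to $\mathcal{Y}$ in the sense of Definition~\ref{def:closeness}, with $\delta$ small, so Proposition~\ref{mainprop} gives $\|u^{i,j}\|_{C^{2,\alpha}((H\cap B_1(0))\times[-1,0))}\to0$ and $\|X^{i,j}-\mathrm{id}_H\|_{C^{2,\alpha}((H\cap B_1(0))\times[-1,0))}\to0$; since the induced parametrisations $G^{i,j}=X^{i,j}+u^{i,j}$ inherit conditions~(1) and~\eqref{item:restriction} and satisfy $\|G^{i,j}-\mathrm{id}_H\|_{C^{2,\alpha}}\to0$ on $C_1(O)\cap H$, this is precisely the assertion that $\cm^j\to\mathcal{Y}$ in $C^{2,\alpha}$ on $C_1(O)$. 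The main --- indeed the only non-routine --- obstacle is the passage from weak to $C^2$ convergence; with the uniform $C^{2,\alpha}$-bound in hand it is soft, the single subtlety being the choice of parametrisation, which is why the graphical reparametrisation is inserted at the outset.
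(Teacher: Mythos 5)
Your overall strategy is the one the paper intends (the paper's proof is the single phrase ``by interpolation''): use the uniform $C^{2,\alpha}$-bound plus Arzel\`a--Ascoli and interpolation to upgrade weak convergence to $C^2$-convergence of the parametrisations, and then invoke Proposition~\ref{mainprop}. You are also right that the non-trivial point is to pin down the parametrisations, since weak convergence of the surfaces does not by itself rule out tangential reparametrisation drift in the interior of $H$.

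However, your specific fix --- replacing $F^{i,j}$ by the pure graph parametrisations $x\mapsto(x,f^i(x,t))$ over $H$ in the $e_{n+1},\dots,e_{n+k}$-directions --- does not work, and the justification offered for condition~(1) is incorrect. The triple edge of $\cm^j$ is a graph over $L$ in the \emph{full} normal directions $e_1,e_{n+1},\dots,e_{n+k}$ (that is the content of the remark after Definition~\ref{def:closeness}); generically it has nonzero $e_1$-component after rotating the $i$-th sheet back by $S^{-(i-1)}$. Consequently the projection of $T^{i,j}_t$ onto $\R^n\times\{0\}^k$ is a perturbed half-space whose boundary is \emph{not} $L$, so the graph function is not even defined on all of $H\cap B_3(0)$; and where it is defined, the graph parametrisation sends $L$ to the slice of $T^{i,j}_t$ at $x_1=0$, not to the actual triple edge. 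Thus the three graph parametrisations do not agree along $L$ and condition~(1) of Definition~\ref{def:closeness} fails --- which matters, since the proof of Proposition~\ref{mainprop} uses condition~(1) crucially (it is what gives $\sum_i X^i_1=\sum_i X^i_{n+1}=0$ on $L$). The correct repair is softer: pass to a subsequence so that $F^{i,j}\to F^i_\infty$ in $C^2$; weak convergence forces $F^i_\infty(H\cap B_3(0))\subset H$, and condition~(2) together with the fact that the limit triple edge is $L$ forces $F^i_\infty|_L=\mathrm{id}_L$. Now set $\tilde F^{i,j}:=F^{i,j}\circ(F^i_\infty)^{-1}$ on a slightly smaller cylinder. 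Because $F^i_\infty$ fixes $L$ pointwise, $\tilde F^{i,j}|_L=F^{i,j}|_L$, so conditions~(1) and~(2) are inherited verbatim; the $C^{2,\alpha}$-norm is controlled up to a fixed factor (absorbed by shrinking the threshold $\delta$); and $\tilde F^{i,j}\to\mathrm{id}_H$ in $C^2$. Then Proposition~\ref{mainprop} applies exactly as you describe in your last paragraph.
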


We want to define the $C^{2,\alpha}$-norm of the triple edge,
including a control of a neighbourhood of the triple edge.  We will use the $K_{2,\alpha}$-norm, as defined in \cite{White05}. 

 \begin{definition}[$\tilde{K}_{2,\alpha}$-norm] \label{def:k2alphanorm} Let $\cm = \bigcup M_t \times\{t\}$
  be a smooth mean curvature flow with triple edges in an open subset $U$ of
space-time. Let $X=(x_0,t_0)\in U$ be a spacetime point on a triple edge of $\cm$. Suppose
first that $X=O$ and $C_1(O)\subset U$, such that the following holds:
\begin{enumerate}
	\item $\cm\cap C_1(O)$ contains one triple edge, and the $K_{2,\alpha}$-norm of the triple edge at $O$ is less or equal than one. Note that the triple edge is a smooth submanifold of $\R^{n+k}$ and thus its $K_{2,\alpha}$-norm is well-defined.\\[-2ex]
	\item  $M_t\cap B_1(0)$ consists of three sheets for all $t\in (0,1)$, diffeomorphic to $H \cap B_1(0)$, which meet at the triple edge. Furthermore, assume that
	$$\sup_{\cm'\cap C_1(O)} |A| \leq 1\, ,$$
\end{enumerate}
where $|A|$ is the norm of the second fundamental form on the sheets. We then say 
$$\tilde{K}_{2,\alpha}(\cm,X)=\tilde{K}_{2,\alpha}(\cm',O)\leq 1 .$$
Otherwise $\tilde{K}_{2,\alpha}(\cm,X)>1$. 

More generally, we let
$$\tilde{K}_{2,\alpha}(\cm,O) = \inf \{\lambda>0\, :\,
\tilde{K}_{2,\alpha}(\cd_\lambda\cm, O)\leq1\}. $$
Note that this includes the possibility for any $\lambda<\tilde{K}_{2,\alpha}(\cm,X)$
another triple edge appears in $D_\lambda(\cm)\cap C_1(O)$ and neither the
$K_{2,\alpha}$-norm of the triple edge or the supremum of $|A|$
approach $1$ as $\lambda \searrow \tilde{K}_{2,\alpha}(\cm,X)$.
Finally, if $X$ is any point on a triple edge of $M$ we let
$$\tilde{K}_{2,\alpha}(\cm,X)= \tilde{K}_{2,\alpha}(\cm-X,O)\ .$$
We have a similar Arzela-Ascoli Theorem as in
\cite[Theorem 2.6]{White05} and 
$\tilde{K}_{2,\alpha}(\cm,\cdot)$ scales like the reciprocal of
distance. Furthermore we define a norm on an open subset $U$ of
space-time by defining
$$\tilde{K}_{2,\alpha;U}(\cm)=\sup_{X\in \cm\cap U}d(X,U) \cdot
\tilde{K}_{2,\alpha}(\cm,X)$$
where $d(X,U)$ is the parabolic distance of $X$ to the parabolic boundary of
$U$. 
\end{definition}

\begin{remark}\label{rem:k2alphanorm} Assume that $\cm$ is a smooth mean curvature flow in $C_1(O)$ with one triple edge, passing through $O$, which separates $\cm$ into three evolving sheets. Assume further that the $\tilde{K}_{2,\alpha}$-norm of each point on the triple edge in $C_{1-\delta}(O)$ is bounded by $\delta >0$. Furthermore, assume that the second fundamental form of each sheet is bounded by $\delta$ as well. For $\delta$ sufficiently small, Schauder estimates imply that the parabolic $C^{2,\alpha}$-norm of each sheet, written as a graph over a suitable domain in $\R^n$, is bounded by a constant $\delta'$ on $C_{3/4}(O)$ where $\delta'\rightarrow 0$ as $\delta \rightarrow 0$. Thus one can construct a parametrisation of the three sheets as in Definition \ref{def:closeness}, such that $\cm$ is $\delta''$-close to $\mathcal{Y}$ in $C_{1/2}(O)$. Again we can assume that $\delta''\rightarrow 0$ as $\delta \rightarrow 0$.
\end{remark}

\begin{proof}[Proof of Theorem \ref{theorem1}] Recall
that we have a sequence $\cm^j$ of smooth mean curvature flows with
triple edges in $C_2(O)$, converging as Brakke flows to
the static solution $\mathcal{Y}$. 

We first note that by the local regularity theorem in \cite{White05}
we have smooth convergence away from the triple edge of
$\mathcal{Y}$. Furthermore, by the upper semicontinuity of the Gaussian density,
we can assume that there are no higher order junctions present in
$C_{3/2}(0)$\footnote{It is easy to see that there is an $\varepsilon>0$ such that any non-planar, non-Y polyhedral minimal cone has density greater than $3/2 + \varepsilon$.}. 
By an easy topological argument the flows $\cm^j$ have
to have triple junctions present in $C_{3/2}(0)$: 
consider the function 
$$h:\R^{n+k}\ra \R^{n-1}, (x_1,\ldots, x_{n+k})\mapsto
(x_2,\ldots,x_n)\ .$$ 
By Sard's Theorem and since the flows converge smoothly to $\mathcal{Y}$ away
from the triple edge of $\mathcal{Y}$, the preimage of every regular value of
$h$ in $M^j_t\cap B_{3/2}(0)$ is a smooth embedded curve in a
$(k+1)$-dimensional subspace $N =\{(x_2,\ldots,x_n) = \text{const}\}$, which in
the annulus
$B^N_{3/2}(0)\setminus B^N_\varepsilon(0)$ is close to three half-rays meeting at the origin under 120
degrees. But this already implies that there has to be a triple
junction present in $B_\varepsilon(0)$. 

{\bf Claim:} There exists $C>0$ and $N \in \N$ such that 
$$ \tilde{K}_{2,\alpha; C_1(O)}(\cm^j) \leq C$$
for $j>N$.

We closely follow the proof of \cite[Theorem 3.2]{White05}. Let us
assume that no such $C$ exists. Fix a $\delta>0$ to be chosen later.  As in \cite[Proposition 2.8]{White05} we see that for $\eta \searrow 0$ and for each $j$
$$ \tilde{K}_{2,\alpha; C_{1-\eta}((0, -\eta))}(\cm^j) \rightarrow \tilde{K}_{2,\alpha; C_{1}(O)}(\cm^j),$$
since $C_{1-\eta}((0, -\eta))$ is compactly contained in $C_1(O)$. Thus for a subsequence, relabelled the same, we can pick $\eta_j \searrow 0$ such that
$$ \tilde{K}_{2,\alpha; U_j}(\cm^j) = s_j < \infty\, ,\ \  s_j \rightarrow \infty\, ,
$$
where $U_j = C_{1-\eta_j}((0, -\eta_j))$. 
Choose $X_j$ on a triple edge of $\cm^j\cap U_j$ such that
\begin{equation}\label{eq:blowup.1}
d(X_j,U_j) \tilde{K}_{2,\alpha}(\cm^j,X_j)> \frac{1}{2} s_j\ .
\end{equation}
By translating, we may assume $X_j = O$. By dilating, we may assume that
\begin{equation*}
\tilde{K}_{2,\alpha}(\cm^j,O) = \delta\, .
\end{equation*}
By \eqref{eq:blowup.1} this implies
\begin{equation}\label{eq:blowup.3}
d(O,U_j) \rightarrow \infty\, .
\end{equation}
Now let $X$ be on a triple edge of $\cm^j\cap U_j$. Then
\begin{equation*}
d(X,U_j) \tilde{K}_{2,\alpha}(\cm^j,X) \leq s_j \leq 2 d(O,U_j) \tilde{K}_{2,\alpha}(\cm_j,O) = 2 \delta d(O,U_j)\, .
\end{equation*}
Thus 
\begin{equation*}\begin{split}
\tilde{K}_{2,\alpha}(\cm^j,X)\leq 2\delta \frac{d(O,U_j)}{d(X,U_j)} \leq 2 \delta \frac{d(O,U_j)}{d(O,U_j) - \|X\|}=
2\delta \Big(1-\frac{\|X\|}{d(0,U_j)}\Big)^{-1} 
\end{split}\end{equation*}
provided the right hand side is positive. By \eqref{eq:blowup.3}, this implies that
$\tilde{K}_{2,\alpha}(\cm^j,\cdot)$ is uniformly bounded by $4\delta$ on compact subsets
of space-time for $j$ large enough on the triple edges
of $\cm^j$.

Since the initial flows converge weakly to $\mathcal{Y}$, the flows
$\cm^j$ have Gaussian density ratios bounded by $3/2 +\varepsilon_j$, where
$\varepsilon_j\ra 0$, on increasingly large subsets
of space-time. Thus we can extract a subsequence converging to a limit
integral Brakke flow $\cm^\infty$, and on $C_1(O)$ the
convergence is in $C^2$ with one triple edge. Note that this flow
has Gaussian density ratios bounded by $3/2$ at all scales. But since
$\cm^\infty$ has a smooth triple edge at $O$ it has to be backwards
self-similar, 
and thus (by rotating) we may assume that it coincides with $\mathcal{Y}$. 

Again, the convergence is smooth away from the triple edge of $\mathcal{Y}$,
and 
\[
  \tilde{K}_{2,\alpha}(\cm^j,\cdot)\leq 4 \delta
\]
 along the triple edge of $\cm^j$ (on
compact regions of spacetime, for large $j$).
By choosing $\delta$ sufficiently small, we can apply Remark~\ref{rem:k2alphanorm} and Corollary~\ref{lem:maincor} to get 
$$\tilde{K}_{2,\alpha}(\cm^j, O) \ra 0,$$
and  we arrive at a contradiction, which proves the claim.

The above claim and the topological argument imply that the flows
$\cm^j\cap C_{3/2}(0)$ have exactly one triple edge, and that the edge is
$C^2$-close to the triple edge of $\mathcal{Y}$. Since
$\tilde{K}_{2,\alpha}(\cm^j,\cdot)$ is uniformly bounded along the triple
edge, and away from the triple edge the convergence is smooth, 
 we can again use Remark~\ref{rem:k2alphanorm} and Corollary~\ref{lem:maincor} to deduce that $\cm^j$
converges in $C^{2,\alpha}$ to $\mathcal{Y}$ on $C_1(O)$.

To see smooth convergence, one can replace the $C^{2,\alpha}$-norm by
any  $C^{k,\alpha}$-norm for $k\geq 3$ in the statement and proof
of Proposition \ref{mainprop} and the proof of Theorem \ref{theorem1}
to get analogous statements for all $k\geq 3$.
\end{proof}

\section{Extension to $Y$-regular  Brakke flows}\label{section-extension}

We consider  integral $n$-Brakke flows in $\R^{n+k}$ with the property
that every point of Gaussian density one is fully regular, i.e., has a
space-time neighbourhood in which the flow is smooth. In particular, we
assume that regular points cannot suddenly vanish. 
We call such flows
{\em unit regular}. 
We begin by showing that the class of unit-regular flows is 
closed under weak convergence
of Brakke flows:

\begin{lemma}\label{unit-lemma}
Suppose $\cm^i$ is a sequence of integral Brakke flows on the time interval $-\infty\le t\le 0$
such that $D_\lambda\cm^i=\cm^i$ for all $\lambda>0$.
Suppose the Gaussian density of $\cm^i$ at $O$ tends to $1$.
Then (for all sufficiently large $i$) $\cm^i$ is a non-moving plane.
\end{lemma}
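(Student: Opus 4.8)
The plan is to combine the hypothesis that each $\cm^i$ is self-similar under all parabolic dilations with the fact that its Gaussian density at $O$ is close to $1$, and then invoke Huisken's monotonicity and the structure theory of self-shrinkers together with White's local regularity theorem. First I would recall that since $D_\lambda\cm^i=\cm^i$ for all $\lambda>0$, the Gaussian density ratio $\Theta(\cm^i,O,r)$ is independent of the scale $r$; hence it equals the Gaussian density $\Theta(\cm^i,O)$, which by assumption tends to $1$ as $i\to\infty$. In particular, for $i$ large, the Gaussian density ratios of $\cm^i$ are bounded by $1+\eps_i$ with $\eps_i\searrow 0$, uniformly at all scales and (by monotonicity and the scale-invariance) at all spacetime points of $\cm^i$ as well.

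Next I would apply Brakke's/White's local regularity theorem (the fundamental regularity theorem for Brakke flows, as in \cite{Brakke} and \cite{White05}): there is a universal $\eps_0>0$ such that any integral Brakke flow with Gaussian density ratios bounded by $1+\eps_0$ in a parabolic cylinder is smooth (with uniform estimates) in a smaller cylinder. Applying this at $O$ at scale $1$, and then — using the dilation invariance $D_\lambda\cm^i=\cm^i$ — at every scale, we conclude that for $i$ large $\cm^i$ is a smooth mean curvature flow which is, moreover, \emph{self-similar under every dilation $D_\lambda$}. A smooth flow invariant under all parabolic rescalings must have vanishing mean curvature (the only such flows are static cones that are smooth, hence linear subspaces): concretely, smoothness at $O$ forces the tangent flow at $O$ to be a multiplicity-one plane, and scale invariance forces $\cm^i$ itself to equal that tangent flow, i.e.\ a non-moving plane. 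Alternatively one argues that self-similarity under all $\lambda$ plus smoothness at the origin gives $M_t\equiv M_0$ and $M_0$ a cone that is a smooth submanifold through $0$, hence a plane; unit density then follows from the density being close to $1$ and the constancy of density.

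The main obstacle I expect is the passage from ``smooth near $O$'' to ``smooth everywhere, hence a plane'': one must leverage the exact dilation invariance to upgrade the local regularity (valid a priori only in a small cylinder around $O$) to global regularity, and then rule out the possibility that $\cm^i$ is a non-flat smooth self-shrinker — which is excluded precisely because its density would then exceed $1$ (indeed, would be bounded below by a universal constant strictly greater than $1$, by the classification of low-density self-shrinkers, or simply because a non-flat smooth self-shrinker through $O$ has density $>1$ by the equality case of monotonicity). Care is also needed at $t=0$: the flow is defined on $-\infty\le t\le 0$, so one should interpret the regularity statement as smoothness of $\cm^i$ as a Brakke flow on $\{t<0\}$ together with the limiting time slice, and the conclusion ``non-moving plane'' should be read as: $\mu^i_t$ is $n$-dimensional Lebesgue measure on a fixed $n$-plane for every $t\le 0$.
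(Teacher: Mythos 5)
Your proof is correct and reaches the same conclusion, but it takes a slightly different route to initial regularity than the paper. You apply White's local regularity theorem for Brakke flows directly, using the observation that for a self-shrinker the Gaussian density at $O$ dominates all Gaussian density ratios at all points and scales (i.e.\ it equals the entropy), and then bootstrap with dilation invariance to get global smoothness. The paper instead first observes that the time-$(-1)$ slice of a dilation-invariant flow is a stationary integral varifold with respect to the Gaussian metric with density everywhere $\le \Theta(\cm^i,O)$, and applies \emph{Allard's} regularity theorem to that elliptic problem to get smoothness for $t<0$; it then invokes White's theorem afterwards, only to upgrade to uniform curvature estimates up to $t=0$. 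Both approaches are sound: yours is arguably more uniform (one parabolic regularity theorem throughout), while the paper's makes the role of Allard's theorem — and hence the existence of the universal $\eps$ — explicit. The concluding steps are the same in both: smoothness at the space-time origin of a dilation-invariant flow forces the flow to coincide with its (planar) tangent flow, and integrality plus density $<2$ forces multiplicity one.

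One small caution on phrasing: you say "Applying this at $O$ at scale $1$, and then ... at every scale". Since the conclusion of White's theorem at scale $\lambda$ is smoothness in a cylinder of fixed proportion of $C_\lambda(O)$, you should note that letting $\lambda\to\infty$ exhausts $\R^{n+k}\times(-\infty,0)$; this is implicit in your argument but worth spelling out, since otherwise the "every scale" step could be misread as only giving regularity near the spatial origin. You correctly flag the $t=0$ endpoint issue, which the paper also addresses briefly.
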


\begin{proof}
By the Allard regularity theorem, there is an $\eps>0$ (depending on dimension)
such that the density of a stationary integral varifold at a singular point must be $\ge 1+\eps$.
In the lemma, we can assume that the Gaussian density $\Theta(\cm^i,O)<1+\eps$ for all $i$.
In the flow $\cm^i$, the surface at time $-1$ is a stationary integral varifold with
respect to a certain Riemannian metric, and the density is everywhere $\le \Theta(\cm^i,O)<1+\eps_i$.
Hence the flows $\cm^i$ are everywhere smooth for times $<0$.
The local regularity theory of \cite{White05} then gives uniform curvatures estimates
in $\{(t,x): -1<t<0, |x|<1\}$ and therefore in $\{(t,x): -1<t\le 0, |x|\le 1\}$.
Since $\cm^i$ is self-similar and smooth at $O$, it must in fact be planar.
Since $\cm^i$ is an integral Brakke flow with Gaussian density $<2$, in fact the Gaussian 
density must be $1$.
\end{proof}

\begin{theorem}\label{unit-theorem}
Suppose $\cm^i$ are unit-regular flows that converge weakly to a flow $\cm$.
Suppose $X$ is a point of $\cm$ at which the Gaussian density is $1$.
Then the $\cm^i$ converge smoothly to $\cm$ in a space-time neighbourhood of $X$
(and therefore $\cm$ is also unit regular.)
\end{theorem}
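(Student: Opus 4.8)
The plan is to use Huisken's monotonicity formula to turn the hypothesis $\Theta(\cm,X)=1$ into a bound on the Gaussian density ratios of $\cm$ near $X$, to transfer that bound to the approximating flows $\cm^i$ by the continuity of density ratios under weak convergence of Brakke flows (see \cite{Ilmanen94}), and then to combine the local regularity theorem of \cite{White05} with the unit-regularity of the $\cm^i$ to obtain uniform smooth estimates for $\cm^i$ near $X$; the conclusion then follows from Arzel\`a--Ascoli and uniqueness of the weak limit.

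Write $X=(x_0,t_0)$ and let $\eps>0$ be the constant from \cite{White05}. Since $r\mapsto\Theta(\cm,X,r)$ is monotone non-decreasing with $\lim_{r\searrow 0}\Theta(\cm,X,r)=\Theta(\cm,X)=1$, first I would fix $\rho>0$ small enough that $\Theta(\cm,X,2\rho)<1+\eps/2$ and that everything below stays in the domain of $\cm$. The map $Y\mapsto\Theta(\cm,Y,\rho)$ is continuous, and for $Y$ near $X$ it involves $\mu_s$ only for $s$ near $t_0-\rho^2<t_0$; hence there is an open spacetime neighbourhood $W$ of $X$ --- which may be taken to contain points with time coordinate larger than $t_0$ --- on which $\Theta(\cm,\cdot,\rho)<1+\eps/2$, and therefore, by monotonicity, $\Theta(\cm,Y,s)<1+\eps/2$ for all $Y\in W$ and $0<s\le\rho$. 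By the continuity of Gaussian density ratios under weak convergence (uniform on compact subsets), after shrinking $W$ slightly, for all sufficiently large $i$ we have $\Theta(\cm^i,Y,\rho)<1+\eps$ for every $Y\in W$, hence by monotonicity $\Theta(\cm^i,Y,s)<1+\eps$ for all $Y\in W$ and $0<s\le\rho$; note also $\Theta(\cm^i,X,\rho)\ra\Theta(\cm,X,\rho)\ge 1$, so the $\cm^i$ are nonempty near $X$.

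Next I would combine this with the local regularity theorem and unit-regularity. By \cite{White05}, the density-ratio bound just obtained implies that, for large $i$, $\cm^i$ is smooth with uniformly bounded second fundamental form on the part of $W$ lying strictly before time $t_0$. Every point of $\cm^i$ there has Gaussian density one, hence by unit-regularity of $\cm^i$ it has a two-sided spacetime neighbourhood in which $\cm^i$ is smooth; since the density-ratio bound --- and hence the curvature estimate of \cite{White05} --- also applies at center points of $W$ with time coordinate $\ge t_0$, short-time smooth existence lets these pieces be assembled into a fixed two-sided spacetime neighbourhood $W'\subset W$ of $X$ on which $\cm^i$ is smooth with $\sup|A|\le C$, uniformly in $i$. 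Standard interior estimates for smooth mean curvature flow then bound all derivatives of $A$ for $\cm^i$ uniformly on a neighbourhood $W''\subset\subset W'$ of $X$. This is the step I expect to be the crux: the local regularity theorem alone only controls the flow \emph{before} time $t_0$ --- the flow that is a non-moving plane for $t<t_0$ and empty afterwards shows the density bound cannot by itself rule out sudden vanishing --- and it is exactly unit-regularity of the $\cm^i$, together with the density gap above $1$, that yields a genuine two-sided neighbourhood.

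Finally, by Arzel\`a--Ascoli a subsequence of the $\cm^i$ converges in $C^\infty$ on $W''$ to a smooth mean curvature flow; since $\cm^i\ra\cm$ weakly, that smooth limit must coincide with $\cm$ on $W''$, so $\cm$ is smooth near $X$ and $\cm^i\ra\cm$ smoothly there. As every subsequence has a further subsequence converging smoothly to $\cm$ near $X$, the whole sequence $\cm^i$ converges smoothly to $\cm$ in a spacetime neighbourhood of $X$. Applying this at an arbitrary point of $\cm$ of Gaussian density one shows that every such point is regular, i.e.\ $\cm$ is unit-regular.
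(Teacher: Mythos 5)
Your proposal is correct and reaches the same conclusion, but it takes a genuinely different route from the paper's. The paper reduces the theorem to showing that every point $X_i\in\cm^i$ converging to $X$ is a \emph{regular} point of $\cm^i$ for large $i$: by upper semicontinuity of Gaussian density $\Theta(\cm^i,X_i)\to 1$, and passing to a tangent flow of $\cm^i$ at $X_i$, Lemma~\ref{unit-lemma} (the density gap for self-similar flows, proved via Allard and \cite{White05}) forces the tangent flow to be a plane, so $\Theta(\cm^i,X_i)=1$ and unit-regularity gives regularity of $X_i$; a compactness argument then yields a fixed neighbourhood of smoothness, and the density ratio bound supplies uniform $C^{k,\alpha}$ estimates. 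You, by contrast, bypass the tangent-flow analysis and Lemma~\ref{unit-lemma} entirely: you propagate the bound $\Theta(\cm,X)=1$ (via monotonicity and upper semicontinuity under weak convergence) to a uniform Gaussian density ratio bound for $\cm^i$ on a two-sided spacetime neighbourhood of $X$, then appeal directly to the local regularity theorem of \cite{White05} plus unit-regularity to build a two-sided region of smoothness with curvature bounds, and finish with Arzel\`a--Ascoli and uniqueness of the weak limit. Both routes are valid and ultimately rest on the same ingredients (monotonicity, \cite{White05}, unit-regularity). The trade-off is that the paper's argument is shorter and sidesteps the step you yourself flag as the crux — assembling the one-sided regularity of \cite{White05} together with unit-regularity into a genuine two-sided smooth neighbourhood. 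That assembly is correct but deserves a bit more care than you give it: one should make explicit that the density ratio bound plus unit-regularity forces the Brakke flow $\cm^i$ to coincide with the unique smooth evolution forward in time from any nearby smooth time slice, and hence cannot suddenly vanish; you assert this via ``short-time smooth existence'' but do not spell out why the Brakke flow must track the smooth evolution. The paper's reduction to pointwise regularity of the $X_i$ avoids having to justify this.
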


\begin{proof}
It suffices to show that if $X_i\in \cm^i$ converges to $O$,
then $X_i$ is regular for all sufficiently large $i$.
For in that case, there is a space-time neighbourhood $U$ of $O$
such that $\cm^i\cap U$ is smooth for all sufficiently large $i$.
By choosing $U$ small (and by monotonicity), we can assume that the Gaussian density
ratios of $\cm^i\cap U$ are $\le 1+\eps$ (for any specified $\eps>0$).
The local regularity theory in \cite{White05} then gives uniform $C^{k,\alpha}$ estimates
on the $\cm^i\cap U$ (for every $k$).

Thus suppose that $X_i\in\cm^i$ converges to $X$ and that $\Theta(\cm,X)=1$.
Let $\Theta_i=\Theta(\cm^i,X_i)$. 
By upper semicontinuity of the Gaussian density (which follows from Huisken's monotonicity),
$\Theta_i\to 1$.
Let $\mathcal{T}^i$ be a tangent flow to $\cm^i$ at $X_i$.
Then $\Theta(\mathcal{T}^i,O)=\Theta_i\to 1$, so (by Lemma \ref{unit-lemma}), 
  $\Theta_i=\Theta(\mathcal{T}^i,O)=1$ for all sufficiently
large $i$.  Since $\cm^i$ is unit regular, this implies that $\cm^i$ is regular at $X_i$.
\end{proof}

We say that  a triple junction point of
a flow is a space-time point at which at least one tangent flow is
a stationary union of 3 $n$-dimensional half-planes. For the next few paragraphs (until Theorem \ref{Y-regular-theorem}), 
singular points refer to those points at which the Gaussian density is $>1$.
In particular, we will regard  triple junction points, even well-behaved ones, as singular points.

Recall that the entropy of a flow $\cm$ is the supremum of the Gaussian density ratios at all points and scales. Fix a number $\zeta$ that is $>3/2$ and that is $<$ the Gaussian
density of a shrinking circle. Note that the density of a shrinking circle is less two. Let $C$ be the class of unit regular
$n$-dimensional Brakke flows in $\R^{n+k}$ with $-\infty < t \le 0$
and with entropy $\le \zeta$. This assumption restricts the type of possible tangent flows and thus implies estimates on the size of the singular set for each flow in $C$, see the proof of the following proposition.

Let $\cm$ be a flow in $C$.
We say that a smooth submanifold $D$ of $ \R^{n+k}$ intersects $\cm(t)$ transversely
provided $D$ is disjoint from the singular set of $\cm(t)$ and provided $D$ intersects
the regular set transversely.
  
\begin{proposition}\label{proposition-no triple}
Suppose that $\cm\in C$ has no triple junction points at a certain time~$t$.
 If $D$ is a flat $(k+1)$
dimensional disk such that $\partial D$
intersects $\cm(t)$ transversely, then it does so in an even number of points.
\end{proposition}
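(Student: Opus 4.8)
The plan is to use the mod-2 intersection number between the curve $\partial D$ and the surface $\cm(t)$, exploiting the hypothesis that $\cm(t)$ has no triple junction points so that, near every point of $\cm(t)$, the surface is either smooth or has a singular set of low dimension. First I would recall that a flow in the class $C$ has entropy $\le \zeta < $ (density of a shrinking circle), which by the classification of low-entropy self-shrinkers and the fact that triple junctions are excluded at time $t$ forces the singular set of $\cm(t)$ to be a closed set of Hausdorff dimension at most $n-2$ (this is the point the proposition statement alludes to, "estimates on the size of the singular set"); in particular the singular set, having codimension at least $2$ in the $n$-manifold, does not separate the regular part, and a generic $(k+1)$-disk $D$ can be taken disjoint from it.

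The key steps, in order: (1) Since $\partial D$ meets $\cm(t)$ transversely, by definition $\partial D$ avoids the singular set of $\cm(t)$ and crosses the regular set transversely, so the intersection $\partial D \cap \cm(t)$ is a finite set of transverse crossing points, and the mod-2 count $I_2(\partial D,\cm(t))\in\mathbb Z_2$ is well defined. (2) Perturb $D$ rel $\partial D$ — which is allowed since the dimension count $\dim D + \dim(\mathrm{sing}\,\cm(t)) = (k+1)+(n-2) < n+k$ gives enough room — so that the whole disk $D$ is disjoint from the singular set of $\cm(t)$ and meets the regular (smooth $n$-dimensional) part $\cm(t)_{\mathrm{reg}}$ transversely in a compact $1$-manifold with boundary. (3) That compact $1$-manifold is a finite disjoint union of circles and arcs; its boundary lies on $\partial D$ and equals exactly $\partial D \cap \cm(t)$, hence has an even number of points, because a compact $1$-manifold has an even number of boundary points. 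This gives $\#(\partial D\cap\cm(t))$ even. (4) Independence of the count from the choice of transverse perturbation is the usual cobordism argument: any two such disks with the same boundary are joined by a generic homotopy, sweeping out a compact manifold whose boundary records the symmetric difference of the two intersection sets as an even-cardinality set — but for the statement as phrased we only need existence of one valid count plus step (3), so this can be kept brief.

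The main obstacle is step (2): justifying that $D$ can be pushed off the singular set of $\cm(t)$ and made transverse to the regular set, which requires knowing the singular set is genuinely of dimension $\le n-2$ (so that it is "thin" enough to avoid with a $(k+1)$-disk) and that it is closed. This rests on the entropy bound $\zeta$ together with the hypothesis of no triple junctions at time $t$: any tangent cone to $\cm(t)$ at a singular point is a stationary integral cone of density $<\zeta<$ (density of a shrinking circle), and is neither a plane nor a $Y$ nor a higher-order junction configuration (those having density $\ge 3/2+\eps$ as noted in the footnote, or being excluded by hypothesis), so the standard Almgren–White stratification forces $\mathcal H^{n-1}(\mathrm{sing}\,\cm(t))=0$, indeed $\dim_{\mathcal H}\le n-2$. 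I would invoke this as the content of "the following proposition" forward-reference and only sketch the dimension-reduction argument, then carry out the elementary transversality-and-cobordism steps (1)–(4) in detail.
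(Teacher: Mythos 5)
Your proposal follows essentially the same route as the paper: both derive the Hausdorff dimension bound $\dim_{\mathcal H}(\mathrm{sing}\,\cm(t))\le n-2$ from the entropy bound and the absence of triple junctions via the stratification theorem of \cite{White97}, both move the disk $D$ off the singular set, and both conclude via the elementary fact that a compact $1$-manifold has an even number of boundary points. The only (minor) implementation difference is that the paper translates the whole disk by a small generic vector $v\in D^\perp$ and notes that $\partial(D+v)\cap\cm(t)$ has the same cardinality as $\partial D\cap\cm(t)$ for $v$ small, whereas you perturb $D$ rel $\partial D$; both are justified by the same dimension count, with the paper's translation argument being marginally easier to make rigorous since it avoids verifying that a rel-boundary perturbation exists.
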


\begin{proof}
By the stratification theorem 9 in \cite{White97}, the set of singular points at time $t$
(i.e.~the set of points at time $t$ at which the Gaussian density is $>1$) has Hausdorff
dimension at most $(n-2)$. This follows since by assumption $\cm$ has no triple junction points at $t$ and the assumption on the entropy rules out tangent flows which have the form $\R^{n-1}\times S$, where $S$ is a one-dimensional self-similar shrinker, compare with Table 2 on p.~27 in \cite{White97}.

Let $D^\perp$ be the $(n-1)$-dimensional linear subspace of $\R^{n+k}$ perpendicular to $D$.
Let $Q$ be the projection of the singular set of $\cm(t)$ onto $D^\perp$.
Then $Q$ has Hausdorff dimension at most $(n-2)$, 
so almost every $v\in D^\perp$ lies in $Q^c$.  That is, for almost every $v\in D^\perp$, 
the disk $D+v$ is disjoint
from the singular set of $\cm(t)$.  It follows that for almost every $v\in D^\perp$, the disk 
$D+v$ intersects
$\cm(t)$ transversely.   
For such a $v$, by elementary topology, $(D+v)\cap \cm(t)$ is a finite disjoint union of compact curves,
and hence $\partial (D+v)\cap \cm(t)$, the set of endpoints of those curves, has an even number
of points.  Note that for small $v$, $\partial (D+v)\cap \cm(t)$ and $\partial D\cap \cm(t)$ have
the same number of points.
\end{proof}

\begin{remark}\label{open remark}
Proposition \ref{proposition-no triple} remains true (with essentially the same proof)
if $\cm(t)$ is allowed to contain  triple junction points, provided there is some open set containing $D$ 
in which $\cm(t)$ has no triple points.
(This is because, in the proof,
we only need $D+v$ to be disjoint from the singular set when $v$ is small.)
\end{remark}

\begin{proposition}\label{propsition triples}
Suppose $\cm\in C$.
Then the set of times at which the flow has a triple junction point is an open set.
Furthermore, if $X=(x,t)$ is a triple junction point of $\cm$ 
and if $\cm^i$ is a sequence of flows in $C$ converging to $\cm$, then (for sufficiently large $i$)
$\cm^i(t)$ has a triple junction point $x_i$ where $x_i\to x$.  
\end{proposition}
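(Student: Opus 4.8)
The plan is to prove the two assertions of Proposition~\ref{propsition triples} together, since openness of the set of triple-junction times is essentially the special case $\cm^i\equiv\cm$ of the convergence statement. So suppose $X=(x,t)$ is a triple junction point of $\cm\in C$ and $\cm^i\to\cm$ weakly, with $\cm^i\in C$. After translating, assume $X=O$. By definition of triple junction point, some tangent flow to $\cm$ at $O$ is a static union of three unit-density half-planes; in particular the Gaussian density $\Theta(\cm,O)$ equals $3/2$. The first step is to use Huisken monotonicity and upper semicontinuity of the Gaussian density, together with the fact that entropy is $\le\zeta<$ (density of shrinking circle), to argue that for a suitably small parabolic scale $\rho>0$ the rescaled flows $\cd_{1/\rho}(\cm^i-Y_i)$ — for any space-time points $Y_i\to O$ — have Gaussian density ratios at $O$ that are $\le 3/2+\eps_i$ with $\eps_i\to 0$, and more importantly that $\cm^i$ itself converges, near $O$, weakly to $\mathcal{Y}$ up to rotation. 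This is where one invokes the main structural input of this section: that a static union of three unit-density half-planes is weakly isolated in the space of self-similar shrinking integral Brakke flows (the result advertised in the section outline), so that a weak limit with density $3/2$ and a smooth triple edge in a tangent flow must actually be $\mathcal{Y}$.

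Given that, the second step is to produce an \emph{actual} triple junction point $x_i$ of $\cm^i(t)$ near $x$. Here I would run the topological argument from the proof of Theorem~\ref{theorem1} in the $Y$-regular setting: fix the linear projection $h:\R^{n+k}\to\R^{n-1}$, $h(x_1,\dots,x_{n+k})=(x_2,\dots,x_n)$. Because $\cm^i$ is unit regular with entropy $\le\zeta$, Proposition~\ref{proposition-no triple} and Remark~\ref{open remark} apply away from triple points, and the stratification theorem bounds the singular set (including triple-junction strata) appropriately. For almost every regular value $c\in\R^{n-1}$ of $h$ restricted to the regular part of $\cm^i(t)$, the slice $\cm^i(t)\cap\{h=c\}$ is a $1$-manifold inside a $(k+1)$-plane, and in an annular region it is $C^1$-close to three half-rays meeting at $120^\circ$ (by the weak convergence to $\mathcal{Y}$ combined with Theorem~\ref{theorem1}/Corollary~\ref{lem:maincor} giving smooth convergence away from, and $C^{2,\alpha}$ control near, the edge — or, in the purely $Y$-regular reading, just from the smooth-convergence conclusion at regular points plus the density constraint). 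An application of Proposition~\ref{proposition-no triple} (more precisely its odd-intersection contrapositive, via Remark~\ref{open remark}) to a small disk $D$ whose boundary circle links the would-be edge forces $\cm^i(t)$ to have a singular point in the enclosed region; since the entropy bound forbids every singular tangent flow other than the triple-junction one in that region, that singular point is a triple junction point $x_i$, and $x_i\to x$ because the disks can be taken with radius $\to 0$.

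For the openness statement (no sequence), the same argument with $\cm^i\equiv\cm$ shows: if $t_0$ is a triple-junction time, then for $t$ near $t_0$ the slice argument still produces a linked singular curve, hence a triple junction point, so the set of such times is open. One should also note why it is exactly the \emph{triple-junction} stratum that is forced and not some other singular behavior: the entropy bound $\zeta<$ (density of shrinking circle) rules out $\R^{n-1}\times S$ shrinkers and all non-planar, non-$Y$ polyhedral cones (density $>3/2+\eps$, as in the footnote to Theorem~\ref{theorem1}), so any singular point near $O$ with density $\le 3/2+\eps_i$ must be a point whose tangent flow is a static $Y$, i.e.\ a triple junction point.

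The main obstacle I expect is the first step: transferring the "weakly isolated" rigidity for self-similar shrinkers into a genuine local statement that $\cm^i$ is, near $O$, weakly close to $\mathcal{Y}$ up to rotation, uniformly in $i$. A naive blow-up gives a tangent flow of the limit, but one needs this to persist at a fixed small scale for all large $i$; this requires combining Huisken monotonicity (to get almost-monotone density ratios), the entropy bound (to exclude competing tangent flows), and a clutching/continuity argument ruling out the density ratio at a fixed scale drifting away from $3/2$ — together with the weak-isolatedness to pin down the limiting object. Once that is in hand, the topological slicing and the appeal to Proposition~\ref{proposition-no triple} are essentially bookkeeping, parallel to the argument already carried out in the proof of Theorem~\ref{theorem1}.
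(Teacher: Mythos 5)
Your proposal captures the right core mechanism --- the parity/linking argument of Proposition~\ref{proposition-no triple} and Remark~\ref{open remark}, applied to a small disk $D$ whose boundary pierces $\cm(t)$ transversely in an odd number of points --- and this is indeed the engine of the paper's proof. However, the route you take to set it up has a genuine circularity, and is also considerably heavier than necessary.

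The serious issue is your ``first step'': you propose to pin down the local weak geometry of $\cm^i$ near $(x,t)$ by invoking the fact that a static union of three unit-density half-planes is weakly isolated among self-similar shrinkers. That statement is the Second Isolation Lemma (Lemma~\ref{second-isolation-lemma}), proved \emph{later} via Theorem~\ref{regularity-theorem} and Corollary~\ref{open-closed-corollary}, which in turn rest on the First Isolation Lemma (Lemma~\ref{first isolation lemma}) --- and that lemma's proof explicitly cites Proposition~\ref{propsition triples}. So you would be assuming what you are trying to prove. The same applies to your appeal to Theorem~\ref{theorem1} and Corollary~\ref{lem:maincor} to get $C^{2,\alpha}$ control near the edge: those are statements about \emph{smooth} flows with triple edges, and there is no smoothness near the edge available at this stage for flows in $C$ (that is precisely what the later $Y$-regular theory delivers). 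The paper's proof carefully avoids needing any information about $\cm^i$ near the singular point $x$ itself: it chooses a disk $D$ entirely by examining the limit $\cm$ (the existence of a $D$ with $\partial D$ meeting $\cm(t)$ transversely in exactly $3$ points follows from the tangent flow at $(x,t)$ being $\mathcal{Y}$ together with unit regularity and upper semicontinuity of density), and then uses only the smooth convergence $\cm^i\to\cm$ at the \emph{regular} points on $\partial D$ (Theorem~\ref{unit-theorem}, which needs only unit regularity) to propagate the odd intersection count to $\cm^i(t)$. After that, the contrapositive of Proposition~\ref{proposition-no triple}/Remark~\ref{open remark} directly produces a triple junction point of $\cm^i(t)$ near $D$; shrinking $D$ and diagonalizing gives $x_i\to x$.

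Two smaller points. First, you do not need anything like a $120^\circ$ configuration of three half-rays in the slices; only the odd parity of the transverse intersection matters, which holds for any small disk piercing the three nearly-planar sheets. Second, you misstate the contrapositive of Proposition~\ref{proposition-no triple}: it does not merely produce ``a singular point in the enclosed region,'' it directly produces a \emph{triple junction} point (odd intersection number $\Rightarrow$ there exists a time-$t$ triple junction, and Remark~\ref{open remark} localizes it to any neighbourhood of $D$). Your proposed fix --- classifying singular tangent flows via the entropy bound to upgrade a generic singular point to a triple junction --- is both unnecessary and not actually justified by what is proven in the paper (the footnote rules out non-planar, non-$Y$ polyhedral cones, not all self-similar shrinkers of entropy below $\zeta$).
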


\begin{proof}
Let $(x,t)$ be a triple junction point of $\cm$.
Note that there exists a small flat $(k+1)$ dimensional disk $D$ centred at $x$ such that
\begin{equation}\label{little disk}
\text{$\partial D$ intersects $\cm(t)$ transversely in exactly $3$ points}.
\end{equation}
It follows that $D$ intersects $\cm(\tau)$ transversely in exactly $3$ points for all $\tau$
sufficiently close to $t$.  By Proposition \ref{proposition-no triple}, there are triple junction points at every such time $\tau$.
This proves openness.

Similarly, if $\cm^i$ converges to $\cm$, then for all sufficiently large $i$, 
$\partial D$ intersects $\cm^i(t)$ transversely in exactly $3$ points (by~\eqref{little disk}
and by smooth convergence, see Theorem \ref{unit-theorem}).  Hence (for such $i$) $\cm^i$ contains a triple
point $(x_i,t)$.
By Remark~\ref{open remark}, there must be a sequence
of such triple points $x_i$ whose distance to $D$ tends to $0$.   
Since $D$ can be arbitrarily small, the standard diagonal argument gives
a sequence $x_i$ converging to $x$.
\end{proof}

\begin{lemma}[First isolation lemma] \label{first isolation lemma}
Suppose $\cm^i$  is a sequence of flows in $C$, each with entropy $\le 3/2$, that
converges to a static $\mathcal{Y}$ flow $\cm$.  Then for all sufficiently large $i$, $\cm^i$
is a static $\mathcal{Y}$ flow.
\end{lemma}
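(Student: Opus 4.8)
The plan is to argue by contradiction: suppose infinitely many $\cm^i$ are \emph{not} static $\mathcal{Y}$ flows. By Proposition~\ref{propsition triples}, since the limit $\cm$ is a static $\mathcal{Y}$ flow and hence has a triple junction point at every time, each $\cm^i$ (for large $i$) has a triple junction point at each time; moreover these triple points accumulate onto the triple edge $L$ of $\mathcal{Y}$. The first step is therefore to show that near the triple edge the $\cm^i$ are in fact \emph{smooth} mean curvature flows with triple edges to which Theorem~\ref{theorem1} applies. For this I would use the entropy bound $\le 3/2$: at any singular point of $\cm^i$, the Gaussian density lies in the interval $(1,3/2]$, and by the footnote remark (non-planar, non-$Y$ polyhedral minimal cones have density $>3/2+\varepsilon$) together with the classification of low-entropy tangent flows, the only possible tangent flows are unit-density planes and static $\mathcal{Y}$ flows. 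Since the $\cm^i$ are unit-regular, the density-one points are regular; the triple-junction points have a neighbourhood modelled on $\mathcal{Y}$. One must check that this forces $\cm^i$ to be, locally, a smooth mean curvature flow with triple edges — i.e.\ that the triple edges form a smooth $(n-1)$-dimensional space-time submanifold and the three sheets are smooth up to it. This should follow from Theorem~\ref{theorem1} itself applied at small scales: at every triple-junction point, some tangent flow is a static $\mathcal{Y}$, so by weak-to-smooth convergence (Theorem~\ref{theorem1} in the $Y$-regular/Brakke form, or directly the rescaled limit argument) the flow is smoothly close to $\mathcal{Y}$ near that point.

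The second step is to apply Theorem~\ref{theorem1} to the sequence $\cm^i$ directly, on a fixed parabolic cylinder centred at a point of the triple edge of $\cm$. Since $\cm^i \to \cm$ weakly and $\cm$ is a static $\mathcal{Y}$ flow, after translating and rescaling so that the limiting triple edge passes through $O$, Theorem~\ref{theorem1} gives that the convergence $\cm^i \to \mathcal{Y}$ is \emph{smooth} on a smaller cylinder. In particular the second fundamental form of $\cm^i$ tends to zero uniformly on that cylinder, and the triple edge of $\cm^i$ is $C^\infty$-close to the straight edge $L$.

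The third and concluding step is to upgrade ``smoothly close to $\mathcal{Y}$'' to ``equal to a static $\mathcal{Y}$''. This is where I expect the main obstacle, and it is a genuine rigidity statement rather than a soft one. The idea is a unique-continuation / connectedness argument: the set of space-time points of $\cm^i$ at which $\cm^i$ locally agrees (after a rigid motion) with a static $\mathcal{Y}$ is both open and closed in $\cm^i$. Openness would come from a local rigidity statement — if the flow is smooth with triple edges and its second fundamental form vanishes on an open set, it is locally planar/static there, and one propagates this along the sheets and across the triple edge using the $120^\circ$ matching conditions and unique continuation for the (linear, up to lower-order terms) evolution equations of the sheets. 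For closedness one uses that the limiting configuration of static-$\mathcal{Y}$ pieces is again static $\mathcal{Y}$. Alternatively — and perhaps more cleanly — one invokes that the $\cm^i$ are ancient ($-\infty < t \le 0$) with entropy $\le 3/2$: by the curvature estimates from Theorem~\ref{theorem1} the rescalings $D_\lambda \cm^i$ subconverge as $\lambda \to \infty$ and $\lambda \to 0$ to static $\mathcal{Y}$ flows, and a monotonicity-formula argument (the Gaussian density is constant in scale, equal to $3/2$, forcing the flow to be exactly self-similar, hence a static cone, hence — being smooth with a triple edge — exactly $\mathcal{Y}$). Since $\cm^i$ has entropy $\le 3/2$ and contains a triple junction point, its entropy is exactly $3/2$ and attained; Huisken monotonicity then forces backward self-similarity about that point, and smoothness of the triple edge forces the self-shrinker to be $\mathcal{Y}$. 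Running this at the appropriate point shows $\cm^i$ is globally a static $\mathcal{Y}$ flow for all large $i$, which is the desired contradiction (or rather, the desired conclusion).
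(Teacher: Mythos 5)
Your step~3, alternative~(b) --- that Proposition~\ref{propsition triples} provides a triple junction point $(x_i,0)$ of $\cm^i$ near $O$, so the entropy bound $\le 3/2$ together with the Gaussian density being exactly $3/2$ at that point forces equality in Huisken's monotonicity, hence backward self-similarity about $(x_i,0)$, whence $\cm^i$ coincides with its own tangent flow there, i.e.\ a static $\mathcal{Y}$ --- is precisely the paper's two-sentence proof. What you did not realize is that this alternative \emph{is} the complete argument: once Proposition~\ref{propsition triples} hands you a triple junction point of $\cm^i$, you need neither the local smoothness asserted in your step~1, nor the smooth convergence from Theorem~\ref{theorem1} in your step~2, nor any unique-continuation or open-closed argument as in your alternative~(a). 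In particular, the final clause ``smoothness of the triple edge forces the self-shrinker to be $\mathcal{Y}$'' is not the right conclusion and is not available to you at that stage; the point is simply that the backward self-similar flow equals its tangent flow at the triple junction point, and that tangent flow is, by the very definition of a triple junction point, a static union of three half-planes.

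Your steps~1 and~2 are not merely superfluous but circular: Theorem~\ref{theorem1} applies to \emph{smooth} mean curvature flows with triple edges, so to apply it to $\cm^i$ you would need the extension to $Y$-regular Brakke flows, Theorem~\ref{Y-regular-theorem}, which you indeed invoke (``Theorem~\ref{theorem1} in the $Y$-regular/Brakke form''). But Theorem~\ref{Y-regular-theorem} is derived from Corollary~\ref{open-closed-corollary}, Theorem~\ref{regularity-theorem} and Lemma~\ref{second-isolation-lemma}, all of which rest on the First Isolation Lemma you are trying to prove. Likewise, the stratification-style reasoning you sketch in step~1 to classify tangent flows is needless once one has a single triple junction point in hand. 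Discard steps~1 and~2 and alternative~(a); the last two sentences of alternative~(b), with the final clause corrected as above, constitute the paper's proof.
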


\begin{proof}
By Proposition~\ref{propsition triples}, there are triple junction points $(x_i,0)$ in $\cm_i$ converging to $O$.
The result follows immediately by the equality case of monotonicity.
\end{proof}

\begin{corollary}\label{open-closed-corollary}
Let $Q$ be the set of $\cm\in C$ such that $\cm$ has entropy $\le 3/2$ and such that $O$ is a singular
point of $\cm$.  
Let $Q_Y$ be the subset of $Q$ consisting of non-moving configurations of three half-planes.
Then $Q_Y$ is an open and closed subset of $Q$.
\end{corollary}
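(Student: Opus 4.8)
The plan is to use the fact that, on any family of integral Brakke flows of uniformly bounded entropy (hence of locally bounded mass), weak convergence as in \eqref{eq:weak conv} is metrizable, so that both ``open'' and ``closed'' can be checked sequentially. Concretely, it suffices to show that $Q_Y$ is sequentially closed in $Q$ and that the complement $Q\setminus Q_Y$ is sequentially closed in $Q$. The second of these is essentially a restatement of the First Isolation Lemma, while the first is a bookkeeping argument about weak limits of rotated $\mathcal{Y}$ flows.

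For closedness, I would take $\cm^i\in Q_Y$ with $\cm^i\to\cm$ and $\cm\in Q$, and argue that $\cm\in Q_Y$. Each $\cm^i$ is a non-moving union of three unit-density half-planes, and since $O$ is a singular point of $\cm^i$ (as $\cm^i\in Q$) and the Gaussian density of such a configuration exceeds $1$ only along the common edge, the edge of $\cm^i$ passes through $O$; hence $\cm^i=R_i\mathcal{Y}$ for some $R_i\in SO(n+k)$. By compactness of $SO(n+k)$ we may pass to a subsequence with $R_i\to R\in SO(n+k)$, and then the space-time measures of $R_i\mathcal{Y}$ (Hausdorff measure on $R_iY$ times Lebesgue measure in time) converge, with no loss of mass, to those of $R\mathcal{Y}$. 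Since the limit flow $\cm$ is already known to be an integral Brakke flow, it must coincide with $R\mathcal{Y}$; and because $R$ fixes the origin, $O$ still lies on the edge $RL$, so $\cm\in Q_Y$.

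For openness, I would take $\cm\in Q_Y$ and an arbitrary sequence $\cm^i\in Q$ with $\cm^i\to\cm$, and show $\cm^i\in Q_Y$ for all large $i$. Each $\cm^i$ lies in $C$ and, since $\cm^i\in Q$, has entropy $\le 3/2$; moreover the limit $\cm$ is a static $\mathcal{Y}$ flow. Thus Lemma~\ref{first isolation lemma} applies and gives that $\cm^i$ is a static $\mathcal{Y}$ flow for all sufficiently large $i$, which together with $\cm^i\in Q$ yields $\cm^i\in Q_Y$ for large $i$. Hence $Q\setminus Q_Y$ is sequentially closed in $Q$, i.e. $Q_Y$ is open in $Q$.

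I do not expect a serious obstacle here: the entire analytic content is already packaged in Lemma~\ref{first isolation lemma} (and, through it, in Proposition~\ref{propsition triples} and the equality case of Huisken's monotonicity), so the corollary is really an exercise in unwinding the definitions of $Q$ and $Q_Y$. The only points that need care are (i) confirming that the weak topology on the relevant class is first countable/metrizable so that the sequential characterisation of open and closed sets is legitimate, and (ii) in the closedness step, ruling out that a weak limit of the rotated configurations $R_i\mathcal{Y}$ degenerates (to, say, a lower-multiplicity plane) — which cannot happen precisely because $R_i\to R$ in the compact group $SO(n+k)$ forces convergence of the associated Hausdorff measures without concentration or loss of mass.
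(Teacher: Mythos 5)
Your argument is correct and follows the paper's own proof essentially verbatim: the paper disposes of closedness by noting $Q_Y$ is compact (which is exactly the $SO(n+k)$-compactness bookkeeping you spell out, with no mass loss in the limit), and derives openness directly from the First Isolation Lemma, just as you do.
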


\begin{proof}
It is clearly compact, hence closed.  Openness follows
from Lemma \ref{first isolation lemma}.
\end{proof}

\begin{theorem}\label{regularity-theorem}
Let $\cm^i$ be a sequence of dilation-invariant flows in $C$ that converge to a static $\mathcal{Y}$-flow. 
Then for all sufficiently large $i$, the only singularities of $\cm^i$  with $t<0$ are triple-junction
singularities.
\end{theorem}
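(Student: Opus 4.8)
The plan is to argue by contradiction, reduce to a single time slice, locate the hypothetical bad points on the edge, and then blow up. Since each $\cm^i$ is dilation invariant, its singular set in $\{t<0\}$ is a cone, so it suffices to rule out singular non-triple-junction points on the slice $\{t=-1\}$. So suppose, after passing to a subsequence and a parabolic rescaling (which leaves $\cm^i$ unchanged), that $\xi_i$ is a singular point of $\cm^i(-1)$ that is not a triple-junction point. As all the flows are dilation invariant, $\cm^i(-1)\to Y$; by White's local regularity theorem together with Theorem~\ref{unit-theorem}, the convergence is smooth away from $L$, so the $\xi_i$ remain in a bounded set and, after a further subsequence, $\xi_i\to\xi_\infty\in L$. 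Translating along $L$ fixes $\mathcal{Y}$, so we may assume $\xi_\infty=0$.

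Next I would extract the density information. Put $\theta_i=\Theta(\cm^i,(\xi_i,-1))$. Upper semicontinuity of the Gaussian density and $\Theta(\mathcal{Y},(0,-1))=3/2$ give $\limsup_i\theta_i\le 3/2$, while $\theta_i>1$ since $\xi_i$ is singular. If $\theta_i\to 1$ along a subsequence, a tangent flow $\mathcal{T}^i$ of $\cm^i$ at $(\xi_i,-1)$ would be a dilation-invariant unit-regular flow with $\Theta(\mathcal{T}^i,O)=\theta_i\to 1$, hence a non-moving plane for large $i$ by Lemma~\ref{unit-lemma}, contradicting $\theta_i>1$. So, after a subsequence, $\theta_i\to\theta\in(1,3/2]$. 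Fix tangent flows $\mathcal{T}^i$ at $(\xi_i,-1)$: these lie in $C$ (dilation invariant, unit regular, entropy $=\Theta(\mathcal{T}^i,O)=\theta_i$), are not static $\mathcal{Y}$-flows (since $\xi_i$ is not a triple-junction point), and their time-$0$ slices are the tangent cones to $\cm^i(-1)$ at $\xi_i$, i.e.\ nonzero stationary integral $n$-cones of density $\le\theta_i\le 3/2+o(1)$. For large $i$ the only such cones are multiplicity-one planes and unions of three half-planes (the classification of low-density stationary cones underlying the stratification used in Proposition~\ref{proposition-no triple}); the planar case is excluded because Allard's theorem would make $\xi_i$ regular. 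Hence these cones are unions of three half-planes, so $\theta_i\ge 3/2$ and therefore $\theta_i\to 3/2$.

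Now pass to a weak limit $\mathcal{T}^i\to\mathcal{T}$ (Ilmanen compactness; the entropy bound gives locally uniform mass bounds). Then $\mathcal{T}$ is dilation invariant and unit regular (Theorem~\ref{unit-theorem}), with $\Theta(\mathcal{T},O)=\lim_i\theta_i=3/2$; thus $\mathcal{T}\in C$, $\mathcal{T}$ has entropy $3/2$, $O$ is a singular point of $\mathcal{T}$, and $\mathcal{T}(0)$ is a genuine union of three half-planes $\mathcal{Y}'$ (a weak limit of such configurations, which cannot degenerate because each has density exactly $3/2$). I claim $\mathcal{T}$ is a static $\mathcal{Y}$-flow. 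For $n\ge 2$, choose $x_0\neq 0$ on the $(n-1)$-dimensional edge of the cone $\mathcal{T}(0)=\mathcal{Y}'$; then $\Theta(\mathcal{T},(x_0,0))\ge\Theta(\mathcal{T}(0),x_0)=3/2=\operatorname{entropy}(\mathcal{T})$, so equality holds, and by the equality case of Huisken's monotonicity formula $\mathcal{T}$ is dilation invariant about $(x_0,0)$ as well as about $O$. Being dilation invariant about two distinct points at the same time forces invariance under translation along $x_0$, so $\mathcal{T}$ splits as a product with $\mathbb{R}x_0$; iterating reduces to the $1$-dimensional case. There, $\mathcal{T}$ is a dilation-invariant unit-regular flow with entropy $3/2<$ the density of the shrinking circle, whose only singularity can be a triple junction (a non-triple-junction singularity at $z\neq 0$ would again, via the equality case of monotonicity, produce a flow that is simultaneously self-shrinking and nontrivially translating, which is impossible) and which is asymptotic to three half-lines meeting at $120^\circ$; such a network shrinker is the static $Y$ itself. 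Hence $\mathcal{T}$ is a static $\mathcal{Y}$-flow, as claimed.

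The remaining step — turning ``$\mathcal{T}$ is a static $\mathcal{Y}$-flow'' and ``$\mathcal{T}^i\to\mathcal{T}$ with each $\mathcal{T}^i$ \emph{not} a static $\mathcal{Y}$-flow'' into a contradiction — is where the real work lies, and I expect it to be the main obstacle. When $\theta_i\le 3/2$ this is immediate from the First Isolation Lemma (Lemma~\ref{first isolation lemma}); the difficulty is the residual possibility $\theta_i>3/2$, where neither Lemma~\ref{first isolation lemma} nor Corollary~\ref{open-closed-corollary} applies directly to $\mathcal{T}^i$. The plan here is to run a quantitative version of the equality-case argument of the previous paragraph: since $\cm^i\to\mathcal{Y}$ and $O$ is a triple-junction point of $\mathcal{Y}$, Proposition~\ref{propsition triples} produces triple-junction points of $\cm^i$ near $0$, with Gaussian density exactly $3/2$, while $\operatorname{entropy}(\cm^i)\to 3/2$; hence $\cm^i$, and so $\mathcal{T}^i$, is $o(1)$-close to being dilation invariant about such a point, which together with exact dilation invariance about $O$ and the dimension-reduction splitting forces $\mathcal{T}^i$ to be $o(1)$-close (weakly) to a static $\mathcal{Y}$-flow; an application of the local regularity theory near a static $\mathcal{Y}$-flow then yields that $\mathcal{T}^i$ is itself a static $\mathcal{Y}$-flow for large $i$, the desired contradiction. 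Establishing this last quantitative rigidity — effectively ruling out dilation-invariant flows in $C$ whose entropy is only slightly above $3/2$ and which converge to $\mathcal{Y}$ — is the crux.
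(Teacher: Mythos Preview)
Your proposal has two genuine gaps, and the paper avoids both with a single idea you are missing: a connectedness argument across scales. The first gap is in your second paragraph: you assert that a stationary integral $n$-cone in $\R^{n+k}$ of density $\le 3/2 + o(1)$ must be a plane or a union of three half-planes, citing ``the classification of low-density stationary cones underlying the stratification used in Proposition~\ref{proposition-no triple}.'' No such classification is invoked there---stratification gives only a dimension bound on the singular set---and none is available in this generality. Without it, your conclusions $\theta_i \ge 3/2$ and $\mathcal{T}(0)=\mathcal{Y}'$ are unjustified, and your third paragraph collapses. The second gap is the one you flag yourself: Lemma~\ref{first isolation lemma} needs entropy $\le 3/2$, which you cannot guarantee for the individual $\mathcal{T}^i$, and the quantitative rigidity you sketch as a substitute is not established.

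The paper's argument runs as follows. Rather than taking only the tangent flow, consider the closure $F_i$ of the whole family $\{\mathcal{D}_\lambda(\cm^i - X_i)^- : \lambda > 0\}$; at $\lambda = 1$ this converges (as $i\to\infty$) to $\mathcal{Y}$, while as $\lambda \to \infty$ it contains the tangent flows at $X_i$. The set $F$ of all subsequential limits of elements of the $F_i$ is then \emph{connected}, contains $\mathcal{Y}$, has each element singular at $O$, and---because passing to the limit absorbs the $o(1)$ entropy excess---lies entirely in $Q$. Corollary~\ref{open-closed-corollary} ($Q_Y$ clopen in $Q$) forces $F \subset Q_Y$. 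In particular the tangent cones $C_i$ to $\cm^i(t_i)$ at $x_i$ (static, since $\cm^i$ is dilation invariant and $t_i<0$) converge subsequentially to $Y$-cones; this replaces your unproved classification. Finally, Simon's regularity theorem \cite{Simon93} says a stationary varifold weakly close to a $Y$-cone consists locally of three $C^{1,\alpha}$ sheets along a $C^{1,\alpha}$ edge; applied to the cones $C_i$, this forces $C_i$ itself to be a $Y$-cone for large $i$, i.e.\ $X_i$ \emph{is} a triple-junction point. This last step replaces your quantitative rigidity entirely: you never need to show $\mathcal{T}^i$ equals a static $\mathcal{Y}$, only that the tangent cone is a $Y$-cone, and for that Simon's theorem suffices once the connectedness argument has delivered $C_i\to Y$.
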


\begin{proof}
Let $X_i=(x_i,t_i)$ be a singularity of $\cm^i$ at some time $t_i<0$.
By scaling, we can assume that $X_i\to O$.

Let $F_i$ be the closure of the set 
\[
  \{   \mathcal{D}_\lambda(\cm^i - X_i)^-: \lambda>0\},
  \]
where ${}^-$ is the restriction to past space-time $\R^{n+k}\times (-\infty,0)$. Note that if we fixed $\lambda=1$, the limit is the flow $\cm$.

Let $F$ be the set of all subsequential limits (as $i\to\infty$) of flows in $F_i$.

Then $F$ is a connected set of flows in $C$ such that
\begin{enumerate}
\item each flow in $F$ has entropy $\le 3/2$,
\item each flow has a singularity at $O$,
\item $F$ contains the flow $\mathcal{Y}$.
\end{enumerate}

By Corollary~\ref{open-closed-corollary}, $F$ consists only of static $Y$ configurations.

Note that $F_i$ contains all the tangent flows to $\cm^i$ at $X_i$.
Each such tangent flow is a tangent cone to $\cm^i(t_i)$ at $x_i$
times $(-\infty,0]$.

We have shown:
if $x_i$ is a singular point of $\cm_i(-1)$ and if $C_i$ is a tangent
cone to $\cm^i(-1)$ at $x_i$, then $C_i$ converges subsequentially
to a $Y$-cone.

By Corollary 2 in \cite{Simon93}, see also the remark after Theorem
7.3 in \cite{Simon93}, $C_i$ consists locally of three $C^{1,\alpha}$ sheets meeting
along a $C^{1,\alpha}$-edge. Note that the corollary discusses only tangent
cones, but the proof shows that the result is also true for a
stationary varifold $M \in\cm$ sufficiently weakly close to a $Y$-cone. 
Thus $C_i$ is a $Y$-cone for all sufficiently large $i$.
\end{proof}

Combining Corollary 2 in \cite{Simon93} as above with  \cite{KinderlehrerNirenbergSpruck78} (see also \cite{Krummel14} and for the higher codimension case \cite{Krummel15}) this implies:

\begin{corollary}\label{regularity-corollary}
For large $i$, $\cm^i(-1)$ consists locally of smooth $n$-manifolds that meet in threes
at equal angles along smooth $(n-1)$-manifolds.
\end{corollary}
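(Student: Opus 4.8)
The plan is to upgrade the conclusion of Theorem \ref{regularity-theorem} from $C^{1,\alpha}$ regularity to full smoothness by invoking a boundary regularity theorem for minimal surfaces meeting along a triple edge. First I would recall the geometric situation: by Theorem \ref{regularity-theorem} (and the cited results of Simon \cite{Simon93}), for large $i$ every tangent cone $C_i$ to $\cm^i(-1)$ at a singular point $x_i$ is a $Y$-cone, and moreover near such a point $\cm^i(-1)$ itself (which is a stationary integral varifold with respect to a smooth Riemannian metric, by Ilmanen's elliptic regularisation / the structure of the flows in $C$) is a union of three $C^{1,\alpha}$ sheets meeting along a $C^{1,\alpha}$ edge at equal angles. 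So locally near each such $x_i$ we have three $C^{1,\alpha}$ graphs, each minimal in the ambient metric, satisfying the $120^\circ$ contact condition along a common $C^{1,\alpha}$ boundary.

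The key step is then to bootstrap this $C^{1,\alpha}$ structure to $C^\infty$. For hypersurfaces ($k=1$) this is exactly the boundary regularity theorem of Kinderlehrer--Nirenberg--Spruck \cite{KinderlehrerNirenbergSpruck78}: a $C^{1,\alpha}$ solution of an elliptic free-boundary-type system with analytic (here, smooth) coefficients, where three minimal graphs meet along a common edge at prescribed angles, is in fact smooth up to the edge, and the edge is a smooth submanifold. (The reference to Krummel \cite{Krummel14} gives an alternative modern treatment, and \cite{Krummel15} handles arbitrary codimension $k$.) Concretely, I would set up the problem as a system for the three height functions over a common domain in the tangent plane to the edge, rewrite the minimal surface system and the angle condition as a boundary value problem of the type treated in those papers, verify that the $C^{1,\alpha}$ regularity from Theorem \ref{regularity-theorem} provides exactly the hypothesis needed to start the bootstrap, and conclude that each sheet is a smooth $n$-manifold-with-boundary and the common edge is a smooth embedded $(n-1)$-manifold.

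The main obstacle is purely a matter of correctly quoting and stitching together the cited regularity results: one must check that the varifold $\cm^i(-1)$ near $x_i$ genuinely satisfies the hypotheses of \cite{KinderlehrerNirenbergSpruck78} (respectively \cite{Krummel15}) — in particular that the three sheets can be written as graphs over a single domain with a common $C^{1,\alpha}$ boundary and that the stationarity is with respect to a smooth metric — rather than proving any new estimate. Once that bookkeeping is in place, the smoothness and the equal-angle condition along the smooth $(n-1)$-dimensional edges follow immediately, so the corollary is essentially a citation-level consequence of Theorem \ref{regularity-theorem} combined with \cite{Simon93}, \cite{KinderlehrerNirenbergSpruck78}, \cite{Krummel14}, and \cite{Krummel15}.
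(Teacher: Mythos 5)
Your proposal is correct and matches the paper's own argument exactly: the paper proves the corollary in a single sentence by combining the $C^{1,\alpha}$ structure obtained from Simon's Corollary 2 (already invoked in the proof of Theorem \ref{regularity-theorem}) with the boundary regularity bootstrap of Kinderlehrer--Nirenberg--Spruck (and Krummel in higher codimension). Your fleshed-out version, including the observation that the time $-1$ slices are stationary varifolds for a smooth ambient metric so that the cited regularity theory applies, is precisely the intended reasoning.
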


In the following Lemma we do not require the flows $\cm^i$
to be in the class $C$ or unit regular.

\begin{lemma}[Second isolation lemma] \label{second-isolation-lemma}
 Let $\cm^i$ be a sequence of dilation invariant flows that converge
 to a flow $\cm$ that is a non-moving union of 3 half-planes. Then for
 all large enough $i$, $\cm^i$ is a  non-moving union of 3 half-planes.
\end{lemma}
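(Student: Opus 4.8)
The plan is to exploit dilation invariance to reduce to a purely elliptic statement about the time slices, and then invoke the regularity theory for stationary varifolds close to a $Y$-cone. Since each $\cm^i$ satisfies $\cd_\lambda \cm^i = \cm^i$, each time slice $\cm^i(-1)$ is a stationary integral varifold with respect to the Euclidean metric (by the argument in Lemma~\ref{unit-lemma}, the self-similarity forces the mean curvature to vanish, so the slice is genuinely stationary), and $\cm^i$ is the static flow $\cm^i(-1) \times (-\infty,0]$. Likewise $\cm$ is $Y \times (-\infty,0]$ after a rotation. So it suffices to show: if $V_i$ is a sequence of stationary integral varifolds in $\bfR^{n+k}$ converging as varifolds to the multiplicity-one $Y$-cone $|Y|$, then for all large $i$, $V_i$ is (a rotation of) $|Y|$.

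The first step is to upgrade weak (varifold/Brakke) convergence to smooth convergence away from the edge $L$: by Allard's theorem and the fact that $Y$ is multiplicity one and smooth away from $L$, the $V_i$ converge smoothly to $Y$ on any compact subset of $\bfR^{n+k}\setminus L$. The second step is to control the behaviour near $L$ using Simon's regularity theorem for stationary varifolds weakly close to a $Y$-cone (Corollary~2 in \cite{Simon93}, as invoked in the proof of Theorem~\ref{regularity-theorem}): this gives that, locally near the edge, each $V_i$ consists of three $C^{1,\alpha}$ sheets meeting along a $C^{1,\alpha}$ edge, with uniform estimates, so that $V_i \to |Y|$ in $C^{1,\alpha}$ near $L$ as well. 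Combining the two regimes, $V_i$ is, after a rotation, a $C^{1,\alpha}$-small graphical perturbation of $Y$ on a fixed ball. The third and final step is a unique-continuation/rigidity argument: a stationary (genuinely minimal) $C^{1,\alpha}$-small perturbation of $Y$ over a ball that is again conical — or more precisely, here we have the additional leverage that $V_i$ is itself dilation invariant, hence a cone — must be a rotation of $Y$, because the only minimal cones sufficiently close to $Y$ are rotations of $Y$ (this is essentially the statement that $Y$ is an isolated point, modulo rotations, in the space of minimal cones close to it — which again follows from Simon's analysis, the cross-section being the union of three half-great-circles and the Jacobi fields being exhausted by rotations).

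The main obstacle is the last step: ruling out nontrivial minimal cones $C^{1,\alpha}$-close to $Y$. One clean way to package this is to note that by dilation invariance $V_i$ is a cone, so its link $\Sigma_i = V_i \cap \bfS^{n+k-1}$ is a stationary integral varifold in the sphere converging to the link of $Y$, which is $S^{n-1}$ with three half-$S^1$'s attached transversally. By the regularity just established, $\Sigma_i$ is a $C^{1,\alpha}$-small perturbation of this link; and the cross-sectional picture reduces — after slicing off the $S^{n-1}$ factor, along which everything is already smoothly a perturbation of a great subsphere — to understanding stationary perturbations of a ``Y'' in $\bfS^{1+k}$, i.e.\ three geodesic half-arcs meeting at $120^\circ$. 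The only such configurations that are themselves links of minimal cones (equivalently, stationary in the spherical sense with the free-boundary/angle condition) and close to the standard one are rotations of it, by the explicit classification of tangent cones in \cite{Simon93} together with \cite{KinderlehrerNirenbergSpruck78}. Hence $V_i$ is a rotation of $|Y|$, and therefore $\cm^i$ is a non-moving union of three half-planes, completing the proof.

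Alternatively — and this may be the cleanest route, avoiding any separate cone-classification — one can argue directly from Theorem~\ref{regularity-theorem} and the equality case of monotonicity, as in Lemma~\ref{first isolation lemma}, once one checks that the $\cm^i$ can be taken to lie in $C$: indeed the hypothesis ``converges to a non-moving union of 3 half-planes'' forces the entropy of $\cm^i$ to be $\le 3/2 + o(1)$, hence $< \zeta$ for large $i$, and dilation invariance plus the smooth-away-from-$L$ convergence plus Simon's theorem near $L$ forces $\cm^i$ to be unit regular for large $i$; then $\cm^i \in C$ with entropy $\le 3/2$, and the equality case of Huisken monotonicity (the Gaussian density at $O$ equals $3/2$ and is achieved at all scales) forces $\cm^i$ to be a static $Y$-flow. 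I would present this second argument as the main proof, since it reuses the machinery already built in this section, and relegate the cross-sectional discussion to a remark.
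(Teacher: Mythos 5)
Your proposal offers two arguments; both have genuine gaps, and neither quite closes.

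In your first argument you assert that dilation invariance ``forces the mean curvature to vanish, so the slice is genuinely stationary.'' This is incorrect: a dilation-invariant Brakke flow is a self-shrinking flow, $\cm^i(t)=\sqrt{-t}\,\cm^i(-1)$, and its time-$(-1)$ slice satisfies the shrinker equation $\vec{H}=-x^\perp/2$, not $\vec{H}=0$. Lemma~\ref{unit-lemma}, which you cite, says the slice is stationary with respect to a \emph{certain Riemannian metric} (the Gaussian conformal metric), not the Euclidean one. So the reduction to ``stationary integral varifolds converging to $|Y|$'' has a false premise, and the elliptic-cone-classification route does not get off the ground. (Separately, the classification of minimal cones $C^{1,\alpha}$-close to $Y$, which you acknowledge is the main obstacle, is asserted but not established; it would be a nontrivial piece of work to supply even if the slices were minimal.)

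Your second argument starts in the right direction and matches the paper's opening moves, but it contains an unjustified step and then diverges from the correct finish. You write that $\cm^i\in C$ ``with entropy $\le 3/2$''; weak convergence to a static $\mathcal{Y}$-flow only yields entropy $\le 3/2+o(1)<\zeta$, not the sharp bound $3/2$, and without that sharp bound the equality case of monotonicity (which is exactly the content of Lemma~\ref{first isolation lemma}, whose hypothesis includes entropy $\le 3/2$) cannot be invoked. The paper's actual route, after placing $\cm^i$ in $C$ (entropy $<\zeta$, unit regular by Allard plus dilation invariance), applies Theorem~\ref{regularity-theorem} and Corollary~\ref{regularity-corollary} to conclude that $\cm^i(-1)$ consists locally of smooth sheets meeting in threes at equal angles, and then applies Theorem~\ref{theorem1} to obtain \emph{uniform smooth estimates up to time $0$}. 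For a dilation-invariant flow those estimates force flatness, since otherwise the curvature of $\cm^i(t)$ would blow up like $(-t)^{-1/2}$ as $t\nearrow 0$; hence $\cm^i$ is a non-moving union of three half-planes. It is this last step — invoking Theorem~\ref{theorem1} and exploiting the $(-t)^{-1/2}$ scaling of curvature under dilation invariance — that replaces your appeal to the equality case of monotonicity, and it is the piece your plan is missing.
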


\begin{proof}
  The monotonicity formula implies that for large enough $i$
  the flows have entropy less than $\zeta$. Since they are dilation
  invariant, Allard's regularity theorem implies that they are unit
  regular and thus belong to $C$. By Theorem \ref{regularity-theorem}
  and Corollary \ref{regularity-corollary} we can apply Theorem
  \ref{theorem1} to see that for large enough $i$ the flows have
  uniform smooth estimates up to time zero. But this implies that they
  are flat, smooth cones and equal to a non-moving union of 3 half-planes.
\end{proof}

We consider the class of $Y$-regular flows, i.e.,  unit-regular flows with the
additional property that at a triple junction point the flow has a
space-time neighbourhood in which it is smooth. From now on we will
consider such a point as a regular point.

\begin{theorem}\label{Y-regular-theorem}
  Let $\cm^i$ be a sequence of $Y$-regular flows that converge to
  $\mathcal{Y}$ in $C_2(O)$. Then for large enough $i$ the flows
  $\cm^i$ are smooth in $C_1(O)$.
\end{theorem}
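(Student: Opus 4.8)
The plan is to reduce Theorem~\ref{Y-regular-theorem} to the smooth case, Theorem~\ref{theorem1}, by first upgrading the weak convergence of $Y$-regular flows to the kind of $C^2$-convergence of smooth flows with triple edges that Theorem~\ref{theorem1} requires. Since a $Y$-regular flow is by definition smooth (in a space-time neighbourhood) at every point of Gaussian density $1$ and at every triple junction point, the only obstacle to smoothness in $C_1(O)$ is the presence of \emph{other} singularities, i.e.\ singular points that are neither regular points nor triple-junction points.

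First I would localise and rescale so that $\cm^i \to \mathcal{Y}$ in $C_2(O)$, and note that by the monotonicity formula the $\cm^i$ have entropy bounded by $\zeta$ for large $i$, and that the Gaussian density ratios of $\cm^i$ are $\le 3/2 + \eps_i$ with $\eps_i \to 0$ on increasingly large space-time regions. Next, away from the triple edge $L$ of $\mathcal{Y}$ the limit has Gaussian density $1$, so by Theorem~\ref{unit-theorem} (unit-regularity is preserved under weak limits, and is part of being $Y$-regular) the convergence $\cm^i \to \mathcal{Y}$ is smooth away from $L$ in a slightly smaller cylinder. It remains to control the flow near $L$.

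The heart of the argument is a blow-up/contradiction scheme modelled on the proof of Theorem~\ref{theorem1}: suppose that, for a subsequence, $\cm^i$ has a singular point $Z_i$ in $C_{1}(O)$ that is \emph{not} a triple-junction point (for $Y$-regular flows, regular points and triple-junction points are the same, so this is exactly the failure of smoothness). Since the entropy of $\cm^i$ is bounded and the $\cm^i$ converge to the static $\mathcal{Y}$, a standard translate-and-dilate argument produces a connected family of subsequential limit flows, each in the class $C$, each with a singularity at $O$ and entropy $\le 3/2$, and the family contains $\mathcal{Y}$; by Corollary~\ref{open-closed-corollary} every flow in this family is a static $Y$-configuration. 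This is precisely the situation of Theorem~\ref{regularity-theorem} and Corollary~\ref{regularity-corollary}: the relevant time-slices $\cm^i(t)$ consist locally of smooth $n$-manifolds meeting in threes at equal angles along smooth $(n-1)$-manifolds near such would-be singular points. But then $Z_i$ lies on a genuine smooth triple edge of $\cm^i$, contradicting the assumption that $Z_i$ is not a triple-junction point.

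Having ruled out extraneous singularities, I would conclude: for large $i$, the only possible singularities of $\cm^i$ in $C_1(O)$ are triple-junction points, and near each of these the flow is (by the $Y$-regularity hypothesis, together with Corollary~\ref{regularity-corollary} giving the smooth structure, and Theorem~\ref{theorem1} applied after a suitable rescaling) a smooth mean curvature flow with triple edges that is $C^2$-close to $\mathcal{Y}$. The topological argument from the proof of Theorem~\ref{theorem1} (using a generic linear projection to $\R^{n-1}$ and Sard's theorem) shows there is exactly one such triple edge and it is $C^2$-close to $L$; then Corollary~\ref{lem:maincor} upgrades this to $C^{2,\alpha}$-closeness and hence smoothness on $C_1(O)$. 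The main obstacle is the bookkeeping in the blow-up argument — ensuring that the limit family one extracts genuinely lands in $C$ with entropy $\le 3/2$ and a singularity at the origin, so that Corollary~\ref{open-closed-corollary} applies — but this is exactly parallel to the argument already carried out in the proof of Theorem~\ref{regularity-theorem}, so it should go through with only routine modifications.
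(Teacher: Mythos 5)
Your overall strategy---show that the only possible singularities of $\cm^i$ near $C_1(O)$ are triple-junction points (which $Y$-regularity then promotes to regular points), and then invoke the smooth local regularity Theorem~\ref{theorem1}---is the right one and matches the paper in spirit. However, there is a genuine gap in the central blow-up step, where you write that ``a standard translate-and-dilate argument produces a connected family of subsequential limit flows, each in the class $C$, each with a singularity at $O$ and entropy $\le 3/2$, \dots; by Corollary~\ref{open-closed-corollary} every flow in this family is a static $Y$-configuration. This is precisely the situation of Theorem~\ref{regularity-theorem} and Corollary~\ref{regularity-corollary}.''

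The problem is that Theorem~\ref{regularity-theorem} and Corollary~\ref{regularity-corollary} are stated---and proved---for sequences of \emph{dilation-invariant} flows in $C$. In their proof, the closure $F_i$ of the dilation orbit $\{\mathcal{D}_\lambda(\cm^i-X_i)^-:\lambda>0\}$ stays inside $C$ with entropy $\le 3/2$ precisely \emph{because} $\cm^i$ is a cone: at large $\lambda$ you recover the original cone, and at small $\lambda$ you get tangent flows. The flows $\cm^i$ in Theorem~\ref{Y-regular-theorem} are not dilation invariant; you only control their Gaussian density ratios by $3/2+\eps_i$ on a bounded (if growing) region, and the dilation orbit around a singular point $Z_i$ will, at large $\lambda$, leave $C_2(O)$ entirely, where you have no entropy control. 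So the ``connected family'' you describe need not land in $Q$, and Corollary~\ref{open-closed-corollary} does not apply as stated. What the paper does instead is rely on the Second Isolation Lemma (Lemma~\ref{second-isolation-lemma}), which you do not invoke: it shows that static $Y$-configurations are weakly \emph{isolated} among dilation-invariant flows, giving a uniform $\eps_0>0$ such that $d(\cm',Q_Y)\ge\eps_0$ whenever $\cm'\notin Q_Y$ is dilation invariant. The proof then picks, for each $i$, the \emph{critical scale} $\eta_i$ at which $\tilde\cm^i:=\mathcal{D}_{\eta_i}(\cm^i-X_i)$ is exactly $\eps_0/2$ from $Q_Y$ while all smaller scales are closer; monotonicity forces $\eta_i\to\infty$, so the subsequential limit $\bar\cm$ genuinely has entropy $\le 3/2$ and sits at distance exactly $\eps_0/2$ from $Q_Y$. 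A triple-junction point of $\bar\cm$ (found via Proposition~\ref{propsition triples}) then forces $\bar\cm$ to be self-similar and, since $O$ is singular, dilation invariant about $O$, contradicting $d(\bar\cm,Q_Y)=\eps_0/2$. This critical-scale/isolation mechanism is the key idea missing from your outline; it is what lets one pass from the controlled, compactly supported data to a genuine dilation-invariant limit in $Q$. Without it, the reduction you describe does not close.
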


\begin{proof}
We denote by $d$ the weak distance on Brakke flows induced by the
weak convergence of Brakke flows. Let $Q_Y$ be the space of all
non-moving unions of three half-planes in $\R^{n+k}, -\infty<t<0$ that are dilation invariant with respect
to the origin in space-time. By Corollary \ref{second-isolation-lemma}
there exists $\eps_0>0$ such that
$$d(\cm', Q_Y) \geq \eps_0$$
for any dilation invariant flow $\cm'$ in $\R^{n+k}, -\infty<t<0, \,
\cm'\not \in Q_Y$.\\[1ex]
Let $\eps=\eps_0/2$. Assume that $\cm^i$ is a sequence of $Y$-regular flows that converges
 to $\mathcal{Y}$ and that there are singular points $X_i
= (x_i,t_i)$ in $\cm^i\cap C_1(O)$. 
 By the monotonicity formula, we can choose $\eta_i \rightarrow \infty$ such
that the flows 
$$\tilde{\cm}^i := \cd_{\eta_i}(\cm^i-X_i)$$
satisfy
$$d(\tilde{\cm}^i, Q_Y) = \eps$$
and 
$$d(\cd_\lambda\tilde{\cm}^i,Q_Y)<\eps$$
for any $0<\lambda_i<\lambda<1$ and $\lambda_i \ra 0$. Note that the Gaussian density 
ratios of $\tilde{\cm}^i$  up to a radius $R_i$, on $C_{R_i}(O)$, where
$R_i\rightarrow \infty$, are bounded above by $3/2+\delta_i$ with $\delta_i\rightarrow
0$. Let $\bar{\cm}$ be a subsequential limit of $\tilde{\cm}^i$. Note that 
\begin{equation}\label{eq:epsilon-distance}
d(\bar{\cm}, Q_Y) = \eps
\end{equation}
and 
$$d(D_\lambda\bar{\cm},Q_Y)\leq\eps  $$
for any $0<\lambda<1$. Furthermore the entropy of $\bar{\cm}$ is
bounded from above by $3/2$.

Note first that this implies that any tangent flow at $-\infty$ of $\bar{\cm}$ is in $Q_Y$. 
Thus we see as in
Proposition~\ref{propsition triples} that the flow $\bar{\cm}$ has a
triple junction point $Z$, which implies that $\bar{\cm} - Z$ is backwards self-similar and thus $\bar{\cm} -Z \in Q_Y$.
Since the points $X_i$ are singular, the origin has to lie on
the edge of $\bar{\cm}$ and thus $\bar{\cm}$ is dilation invariant
with respect to $O$ as well. This yields a contradiction to \eqref{eq:epsilon-distance}.
\end{proof}

\begin{corollary}
  \label{Y-regular-corollary}
The class of $Y$-regular flows is closed under weak convergence.
\end{corollary}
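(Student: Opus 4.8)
The plan is to reduce the statement to what has already been established for $Y$-regular flows, namely Theorem~\ref{Y-regular-theorem} together with the earlier closure result for unit-regular flows (Theorem~\ref{unit-theorem}). Since a $Y$-regular flow is by definition a unit-regular flow with the additional smoothness-at-triple-junctions property, the closure of unit-regular flows under weak convergence (Theorem~\ref{unit-theorem}) already takes care of every point of Gaussian density one: at such a point the convergence is smooth and the limit is regular there. So the only thing left to check is the behaviour of the limit flow near a triple-junction point, i.e.\ a point of the limit at which some tangent flow is a static union of three unit-density half-planes.

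First I would set up the situation: let $\cm^i$ be $Y$-regular flows converging weakly to a flow $\cm$, and let $X$ be a point of $\cm$. If $\Theta(\cm,X)=1$, invoke Theorem~\ref{unit-theorem} directly to conclude smooth convergence near $X$ and regularity of $\cm$ at $X$. If $X$ is a triple-junction point of $\cm$, the idea is to pass to a tangent flow at $X$: after translating $X$ to the origin and parabolically rescaling, one obtains flows $\cd_{\lambda_i}(\cm^i - X_i)$ (with suitable $X_i \to X$, $\lambda_i \to \infty$) that converge weakly to $\mathcal{Y}$ in $C_2(O)$ — here one uses Huisken's monotonicity and upper semicontinuity of Gaussian density to control the density ratios, exactly as in the proof of Theorem~\ref{Y-regular-theorem}. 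By that theorem, for large $i$ the rescaled flows are smooth in $C_1(O)$ with a single triple edge close to that of $\mathcal{Y}$; undoing the rescaling, $\cm^i$ is smooth (as a mean curvature flow with triple edges) in a fixed space-time neighbourhood of $X$, with uniform $C^{2,\alpha}$ — indeed $C^{k,\alpha}$ for all $k$ — estimates coming from Corollary~\ref{lem:maincor} and the smooth version of Proposition~\ref{mainprop}. Consequently the convergence $\cm^i \to \cm$ is smooth near $X$, and in particular $\cm$ itself is smooth (with triple edges) near $X$, so $X$ is a regular point for $\cm$ in the $Y$-regular sense.

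The remaining case is a point $X$ of $\cm$ that is neither of density one nor a triple-junction point; here there is nothing to prove for the definition of $Y$-regularity, since $Y$-regular only imposes conditions at density-one points and triple-junction points. Thus combining the two cases above shows that every point of $\cm$ that needs to be regular (for $Y$-regularity) is regular, so $\cm$ is $Y$-regular, which is the assertion. I would also remark that unit-regularity of $\cm$ is automatic from Theorem~\ref{unit-theorem}, so only the triple-junction clause requires the argument just sketched.

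The main obstacle is the triple-junction case: one must verify that the rescalings $\cd_{\lambda_i}(\cm^i - X_i)$ can genuinely be arranged to converge to $\mathcal{Y}$ rather than merely to some flow with a triple point, and that the choice of centres $X_i$ and scales $\lambda_i$ is legitimate — this is where one leans on the isolation results (Lemmas~\ref{first isolation lemma} and~\ref{second-isolation-lemma}) and on the structure of tangent flows at triple points established in Theorem~\ref{Y-regular-theorem} and Proposition~\ref{propsition triples}. Once the correct rescaled picture is in hand, the smooth convergence is a direct application of Theorem~\ref{Y-regular-theorem}, and the passage back to a neighbourhood of $X$ in the original flows is routine scaling bookkeeping.
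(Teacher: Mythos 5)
The paper gives no explicit proof of this corollary, so you are supplying details; your overall outline — unit regularity of the limit via Theorem~\ref{unit-theorem}, and smoothness at triple-junction points via Theorem~\ref{Y-regular-theorem} — is the right structure. However, the triple-junction case as written contains a genuine gap, and it is precisely where you wave it away as ``routine scaling bookkeeping.''

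You construct rescalings $\cd_{\lambda_i}(\cm^i - X_i)$ with $\lambda_i\to\infty$ so that they converge to $\mathcal Y$, apply Theorem~\ref{Y-regular-theorem} to conclude these rescaled flows are smooth in $C_1(O)$, and then assert that ``undoing the rescaling, $\cm^i$ is smooth in a fixed space-time neighbourhood of $X$, with uniform $C^{2,\alpha}$ estimates.'' That step fails: undoing the dilation only gives smoothness of $\cm^i$ in $C_{1/\lambda_i}(X_i)$, a neighbourhood that shrinks to a point, and the $\tilde K_{2,\alpha}$ estimates degenerate like $\lambda_i$ at the original scale. A diagonal sequence converging to a tangent flow tells you nothing, in a fixed neighbourhood of $X$, about the limit flow $\cm$ itself — it only re-derives the tangent flow structure at $X$, which was already assumed.

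The repair is to work at a fixed scale and use quantitative forms of the relevant theorems. Choose a single $\lambda$, large but fixed, so that $\cd_\lambda(\cm - X)$ is within $\eps/2$ of $\mathcal Y$ in $C_2(O)$ (possible since some tangent flow at $X$ is $\mathcal Y$); then for $i$ large, $\cd_\lambda(\cm^i - X)$ is within $\eps$. One cannot directly invoke the sequential Theorem~\ref{Y-regular-theorem} because $\cd_\lambda(\cm^i-X)$ does not converge to $\mathcal Y$. Instead one needs the equivalent quantitative statement: there is an $\eps_0>0$ such that any $Y$-regular flow within distance $\eps_0$ of $\mathcal Y$ in $C_2(O)$ is smooth in $C_1(O)$, and likewise the claim in the proof of Theorem~\ref{theorem1} gives an $\eps_0>0$ and $C<\infty$ so that such flows have $\tilde K_{2,\alpha;C_1(O)}\le C$. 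Both quantitative versions follow from the sequential ones by the usual contradiction argument. With these in hand, $\cd_\lambda(\cm^i - X)$ is smooth with uniformly bounded $\tilde K_{2,\alpha}$ for $i$ large; passing to the limit $i\to\infty$ (Arzela--Ascoli in the sense of Definition~\ref{def:k2alphanorm}) shows $\cd_\lambda(\cm - X)$ is a smooth mean curvature flow with triple edges in $C_{1/2}(O)$, hence $\cm$ is smooth near $X$. This is the missing step in your argument.
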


\section{Short-time existence}\label{section-short-time}
We aim to show smooth short-time existence under Mean Curvature Flow
for initial smooth, compact surface clusters with smooth triple edges,
but no higher order junctions. We employ Ilmanen's elliptic regularisation
scheme \cite{Ilmanen94} to construct a Brakke flow starting at the
initial smooth surface cluster and use our
previous estimates to show that the flow remains smooth for short time. We
recall the construction of Ilmanen, adapted to our setting, and its properties
needed in the sequel.

\begin{theorem}[\cite{Ilmanen94}, section 8.1]\label{matching-motion}
  Let $T_0$ be local
integral $n$-current in $\R^{n+k}$ with $\partial T_0 = 0$ and
finite mass ${\bf M}[T_0]<\infty$. Then there exists a local integral
$(n+1)$-current $T$ in $\R^{n+k}\times [0,\infty)$ and a family
$\{\mu_t\}_{t\geq 0}$ of Radon measures on $\R^{n+k}$ such that
\begin{itemize}
\item[$(i)$] (a) $\partial T = T_0$,\\[1ex]
  (b) ${\bf M}[T_B]$, where $T_B = T\res (\R^{n+k} \times B),\ B\subset
  \R$, is absolutely continuous with respect to $\mathcal{L}^1(B)$.\\[-2ex]
\item[$(ii)$] (a) $\mu_0=\mu_{T_0}, {\bf M}[\mu_t]\leq {\bf M}[\mu_0]$
  for $t>0$.\\[1ex]
(b) $\{\mu_t\}_{t\geq 0}$ is an integral $n$-Brakke flow.\\[-2ex]
\item[$(iii)$] $\mu_t\geq \mu_{T_t}$ for each $t\geq 0$, where $T_t$ is
  the slice $\partial(T\res (\R^{n+k}\times [t,\infty))$.  
\end{itemize}
\end{theorem}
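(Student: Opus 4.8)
The plan is to carry out Ilmanen's elliptic regularisation scheme. For each $\eps>0$ I would work in $\R^{n+k}\times\R$, with $z$ denoting the extra coordinate, and minimise the weighted mass $\int e^{-z/\eps}\,d\|S\|$ over integral $(n+1)$-currents $S$ whose boundary is $T_0$ placed at $z=0$ and which agree with the cylinder $T_0\times(-\infty,0]$ far down the $z$-axis (localised by truncation, since $T_0$ is only a \emph{local} current). Existence of a minimiser $S_\eps$ follows from the Federer--Fleming compactness theorem together with lower semicontinuity of the weighted mass, the competitor $T_0\times(-\infty,0]$ (truncated) supplying the needed mass bounds; crucially these bounds are uniform in $\eps$. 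A minimiser of this functional is a minimal surface for the conformally Euclidean metric $e^{-2z/(n\eps)}(dx^2+dz^2)$; equivalently, reparametrising the $z$-direction as time, it is the space-time track of a mean curvature flow that translates in the $z$-direction. Interior regularity theory for such minimisers (De Giorgi/Allard, plus the structure theory for area-minimising integral currents) shows $S_\eps$ is smooth off a closed set of low dimension, that its slices $S_\eps\cap\{z=\mathrm{const}\}$ move smoothly by mean curvature, and that on the regular part the varifold and current structures coincide with multiplicity one.

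Next I would pass to the limit $\eps\to 0$. The uniform mass bounds let me extract a subsequential weak limit $T$, an integral $(n+1)$-current in $\R^{n+k}\times[0,\infty)$, and since $\partial$ is continuous under weak convergence with locally bounded masses, $\partial T=T_0$, giving (i)(a). Part (i)(b) follows from the slicing inequality $\int_B {\bf M}[T_t]\,dt\le {\bf M}[T\res(\R^{n+k}\times B)]$ and the mass bound. Simultaneously the slice-measures $\mu^\eps_t$ of the $\eps$-translator, reindexed so the translating speed is normalised against $t$, have locally uniformly bounded mass; each $\{\mu^\eps_t\}_t$ is \emph{itself} an integral Brakke flow, being a rigid translation of the fixed translator $S_\eps$. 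Hence Ilmanen's compactness theorem for integral Brakke flows (\cite[Theorem 8.1]{Ilmanen94}) produces a subsequential limit $\{\mu_t\}_{t\ge0}$ that is again an integral Brakke flow, giving (ii)(b). The identity $\mu_0=\mu_{T_0}$ and the inequality ${\bf M}[\mu_t]\le {\bf M}[\mu_0]$ of (ii)(a) follow from the mass-decreasing property of the minimisation (the weighted mass of $S_\eps$ is controlled by that of the cylinder) together with lower semicontinuity and the Brakke inequality in the limit.

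For the matching property (iii), I would compare the current slice $T_t=\partial\big(T\res(\R^{n+k}\times[t,\infty))\big)$ with the measure $\mu_t$. At the $\eps$ level the two structures agree on the smooth translator slices with equal mass; passing to the limit, the current can only lose mass to cancellation of orientations, whereas the varifold retains full multiplicity, so $\mu_t\ge \mu_{T_t}$ (indeed $\mu_t\ge |T_t|$ as measures). The step I expect to be the main obstacle is verifying that the limiting family $\{\mu_t\}$ really is a Brakke flow: this is not formal, and it is precisely here that one leans on the uniform-in-$\eps$ local mass bounds and on the fact that the pre-limit flows are \emph{exact} Brakke flows, i.e. one invokes Ilmanen's compactness theorem rather than re-deriving it. The second delicate ingredient is the interior regularity of the elliptic minimiser $S_\eps$ — used both to identify the slices as moving by mean curvature and to get the equal-mass coincidence of the two structures before taking limits — which rests on the full geometric-measure-theoretic regularity theory for minimising integral currents.
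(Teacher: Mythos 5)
Your overall strategy is correct: it is Ilmanen's elliptic regularisation scheme, which is exactly what the paper sketches after stating the theorem (the paper cites the result; the proof is Ilmanen's and the paper only outlines it). However, two steps in your sketch are not just unspecified details but are actually stated incorrectly, and they are precisely the two points where the scheme is subtle.

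First, the role of the translator. You write that ``the slices $S_\eps\cap\{z=\mathrm{const}\}$ move smoothly by mean curvature'' and later that the family $\{\mu^\eps_t\}$ consists of ``slice-measures.'' The slices of the minimiser $P^\eps$ do \emph{not} move by mean curvature as the height varies. What is true, and what Ilmanen and the paper use, is that the rigidly translated $(n+1)$-dimensional currents $P^\eps(t)=(\sigma_{-t/\eps})_\#P^\eps$, $\sigma_{-t/\eps}(x,z)=(x,z-t/\eps)$, constitute an $(n+1)$-dimensional integral Brakke flow in $\R^{n+k}\times\R$ (on the region $\{z>-t/\eps\}$); the Radon measures $\mu^\eps_t$ are the measures of these full $(n+1)$-dimensional currents, not of their slices. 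One then applies Ilmanen's Brakke-flow compactness to $\{\mu^{\eps_i}_t\}$ to get a limit $\{\bar\mu_t\}$. The crucial observation, absent from your sketch, is that since the translation speed $1/\eps\to\infty$, the limiting flow $\bar\mu_t$ is invariant in the $z$-direction, i.e.\ is a cylinder over an $n$-dimensional flow, and only \emph{then} does one slice to produce $\{\mu_t\}$. Slicing before the limit is not meaningful because the pre-limit slices are not a Brakke flow.

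Second, the construction of the matching $(n+1)$-current $T$. You propose to take $T$ as a subsequential weak limit of the $S_\eps=P^\eps$ themselves. This does not give the space-time track: as $\eps\to 0$ the translator becomes more and more cylinder-like near $z=0$, and a direct weak limit of $P^\eps$ cannot detect the time evolution (heuristically, it would give a cylinder and item~(iii) would force $\mu_t\equiv\mu_{T_0}$). The correct construction, as in the paper, takes $T$ to be a subsequential limit of the rescaled currents $T^{\eps_i}:=(\kappa_{\eps_i})_\#P^{\eps_i}$ with $\kappa_\eps(x,z)=(x,\eps z)$, so that the $z$-axis is compressed onto the time axis; it is these $T^\eps$, not $P^\eps$, that have the uniform (local) mass bounds and whose limit is the space-time track satisfying (i)(b) and (iii).

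A minor point: the conformal factor should be $e^{-2z/((n+1)\eps)}$, since an $(n+1)$-dimensional area element picks up $\phi^{n+1}$ and one needs $\phi^{n+1}=e^{-z/\eps}$; your $e^{-2z/(n\eps)}$ is off by one in the exponent.
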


We outline the main steps of the proof. Ilmanen constructs local
integral $(n+1)$-currents $P^\eps$ in $\R^{n+k}\times \R$ that
minimise the
elliptic translator functional
$$ I^\eps[Q] = \frac{1}{\eps}\int e^{-z/\eps}\, d\mu_Q(x,z)\, ,$$ 
where $z$ is the coordinate in the additional $\R$-direction, subject
to the boundary condition
$$\partial Q = T_0\, ,$$
and $\R^{n+k}$ is identified with the height zero slice in
$\R^{n+k}\times \R$. Note that $I^\eps$ is the area functional for the
metric $\bar{g} = e^{-2z/((k+1)\eps)}(g \oplus dz^2)$, where $g \oplus
dz^2$ is the product metric on $\R^{n+k}\times \R$. 

The associated Euler-Lagrange equation implies that the family of
Radon measures $\mu^\eps_t=\mu_{P^\eps_t}$ corresponding to
$$P^\eps (t) = (\sigma_{-t/\eps})_\# (P^\eps)$$ 
for $0\leq t<\infty$, where
$\sigma_{-t/\eps}(x,z)=(x,z-t/\eps)$, is a downward translating 
integral $(n+1)$-Brakke flow on the relatively open subset
$W^\eps := \{(x,z,t)\, :\, z>-t/\eps,\ t\geq 0\}$ of space-time
$(\R^{n+k}\times \R)\times [0,\infty)$. 

Ilmanen's compactness theorem for Brakke flows implies that there is a
sequence $\eps_i\ra 0$ such that $\{\mu^{\eps_i}_t\}_{t\geq 0}$ converges
to a Brakke flow $\{\bar{\mu}_t\}_{t\geq 0}$ on
space-time. Furthermore, Ilmanen shows that $\bar{\mu}_0 = \mu_{T_0\times \R}$ and
$\bar{\mu}_t$ is invariant in the $z$-direction, which yields the
desired solution $\{\mu_t\}_{t\geq 0}$ via slicing. 

The integral current $T$ is constructed via considering a
subsequential limit of $T^{\eps_i}:=(\kappa_{\eps_i})_\#(P^{\eps_i})$ where
$\kappa_{\eps_i}(x,z)=(x,\eps_i z)$, which can be seen as an approximation
to the space-time track of $\{\mu_t\}_{t\geq 0}$ where now the
$z$-direction is considered as the time direction. Point $(iii)$
above verifies this interpretation.

We now consider $M_0$, a smooth, compact $n$-surface cluster in
$\R^{n+k}$, i.e.~a finite union of compact manifolds-with-boundary
that meet each other at 120 degree angles along their smooth
boundaries and no higher order junctions. We say that $M_0$ is
{\it orientable} if there exists an assignment of orientations to the
regular (non-triple junction) parts of $M_0$ in such a way that along
each edge, the three sheets that meet all induce the same orientation
on the edge.  

We will assume for the moment that $M_0$ is orientable (we will see
later that this is in fact not necessary). This orientation determines
a flat chain mod 3 whose support is $M$: given the
orientation, we give each piece multiplicity 1 to get the flat
chain. We again denote this flat chain with $T_0$. Note that $\partial
T_0 =0$. Ilmanen's construction works now analogously by replacing
local integral currents by flat chains mod 3, to obtain a flat 3-chain
$T$ and a Brakke flow $\{\mu_t\}_{t\geq 0}$ with the properties as in Theorem
\ref{matching-motion}. The existence of the minimisers $P^\eps$ follows from a fundamental compactness theorem for flat chains with coefficients in a group $G$. For a brief introduction to flat chains with coefficients in a group $G$, together with the corresponding references, see section 3 in \cite{White96}. 
\begin{lemma} \label{Y-regular-lemma}
  The approximating flows $\{\mu^\eps_t\}_{t\geq 0}$ are
  $Y$-regular for $t>0$. Thus the flow $\cm = \{\mu_t\}_{t\geq 0}$ is $Y$-regular for $t>0$.
\end{lemma}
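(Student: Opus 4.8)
The plan is to show that each approximating flow $\{\mu^\eps_t\}_{t\geq 0}$, which is a downward-translating integral (or mod-3) Brakke flow, can only have singularities of two types for $t>0$: points of Gaussian density one (which are regular by Brakke's theorem / Allard, and do not vanish because the flow is a translator hence locally smooth away from a low-dimensional set) and triple-junction points. Since $Y$-regularity is exactly the assertion that points of density one and triple-junction points have smooth space-time neighbourhoods, it suffices to classify the possible tangent flows. First I would recall that $P^\eps$ minimizes the elliptic translator functional $I^\eps$, so $P^\eps_t$ is, for each $t$, a minimizing (mod-3, or integral) hypersurface for the conformally-changed metric $\bar g = e^{-2z/((k+1)\eps)}(g\oplus dz^2)$ on $\R^{n+k}\times\R$; in particular $\mu^\eps_t$ restricted to the open slab $\{z>-t/\eps\}$ is stationary (indeed area-minimizing in the mod-3 sense) for a smooth metric.

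Next I would invoke the regularity theory for area-minimizing flat chains mod $3$: by the work of Taylor (in the classical dimension) and, in the generality needed here, the combination of Simon's result (Corollary 2 in \cite{Simon93}) together with \cite{KinderlehrerNirenbergSpruck78}, \cite{Krummel14}, \cite{Krummel15} cited just above, a mod-$3$ minimizer is, away from a set of codimension at least two in its support, either a smooth embedded hypersurface (density one) or three smooth sheets meeting along a smooth $(n-1)$-dimensional edge at equal angles (a $Y$-singularity). The entropy bound coming from the monotonicity formula (the density ratios of $\mu^\eps_t$ are controlled, and in the limiting regime one may take the relevant threshold $\zeta>3/2$ but below the density of a shrinking circle) rules out any tangent object of the form $\R^{n-1}\times(\text{one-dimensional shrinker})$ or a higher-order junction cone, since those have density $\ge 3/2+\eps$. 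Hence the density of $\mu^\eps_t$ at every point is either $1$ or exactly $3/2$, and the singular set is contained in the set of $Y$-points. Because $P^\eps$ is a translator, this structure persists in a space-time neighbourhood: near a density-one point the flow is a smooth graphical translator, and near a $Y$-point it is a smooth translating configuration of three sheets meeting along a smooth edge. Therefore $\{\mu^\eps_t\}_{t\geq 0}$ is $Y$-regular for $t>0$.

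Finally, to pass to the limit flow $\cm=\{\mu_t\}_{t\geq 0}$: since the $z$-slices $\mu_t=\bar\mu_t\res\{z=0\}$ of Ilmanen's limit are themselves weak limits of the $Y$-regular translating flows $\{\mu^{\eps_i}_t\}$ (after the $z$-translation normalization), and the class of $Y$-regular flows is closed under weak convergence by Corollary \ref{Y-regular-corollary}, the flow $\cm$ is $Y$-regular for $t>0$. One must be slightly careful that the $z$-invariance of $\bar\mu_t$ and the slicing do not introduce new singularity types; but the slices of a product $(\R^{n+k}\text{-flow})\times\R$ inherit exactly the tangent-flow structure of the ambient flow, so no new tangent flows appear.

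The main obstacle I expect is the regularity step for mod-$3$ minimizers in this generality: one needs that a mod-$3$ area-minimizing chain close to a $Y$-cone is genuinely a smooth $Y$-configuration (not merely $C^{1,\alpha}$), and that the entropy bound really does exclude every competing singularity model. This is precisely why the paper has just assembled the citations \cite{Simon93}, \cite{KinderlehrerNirenbergSpruck78}, \cite{Krummel14}, \cite{Krummel15} and invoked Corollary \ref{regularity-corollary}; the remaining work is to check that these apply to the static slices $\mu^\eps_t$ of the elliptic-regularization minimizers (which it does, since those slices are honest minimizers for a smooth metric on the open slab) and that the translator structure upgrades the time-slice regularity to space-time regularity.
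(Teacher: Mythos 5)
Your core ingredients match the paper's: you identify that the slices of the minimizing mod-$3$ chain $P^\eps$ are area-minimizers for a smooth metric on the open slab, invoke Allard for density-one points, invoke Simon \cite{Simon93} plus \cite{KinderlehrerNirenbergSpruck78}, \cite{Krummel14}, \cite{Krummel15} for triple-junction points, use the translator structure to upgrade to space-time regularity, and conclude for the limit via Corollary~\ref{Y-regular-corollary}. That is exactly the paper's argument.

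However, the opening of your proposal contains a conceptual misstep that then drives an unnecessary and unjustified digression. $Y$-regularity is a \emph{conditional} property: it asserts that \emph{if} a point has Gaussian density one or has a $Y$-cone tangent flow, \emph{then} the flow is smooth nearby. It does \emph{not} assert that these are the only singularity types present. So there is no need to ``classify the possible tangent flows'' or to rule out higher-order junctions. Your entropy argument is aimed at such a classification, but (a) it is irrelevant to the lemma, and (b) it is not justified: there is no a priori entropy bound $\le 3/2+\eps$ (or even $<2$) on the approximating translators $\mu^\eps_t$ coming from the elliptic-regularization minimizers, since the initial cluster $M_0$ may have large density ratios at coarse scales, and in any case the $\zeta$-threshold of Section~\ref{section-extension} belongs to the class $C$ of dilation-invariant flows, not to these translators. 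In short, drop the classification step: the paper's proof only needs the local regularity statements (i) and (ii) for the mod-$3$ minimizer $P^\eps$, which you have correctly identified, and then the passage to the limit you describe.
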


\begin{proof}
  Since the approximating flows are translating solutions, it suffices
  to show that the flat 3-chains $P_\eps$ satisfy:
  \begin{enumerate}
  \item[(i)] $\mu_{P_\eps}$ is smooth in a neighbourhood of each point
    with density one.\\[-2ex]
\item[(ii)] $\mu_{P_\eps}$ is smooth in a neighbourhood of each point
  where a tangent cone is a static union of 3 unit density half-planes.
  \end{enumerate}
Recall that the $P_\eps$ are area minimising in the metric $\bar{g} =
e^{-2z/((k+1)\eps)}(g \oplus dz^2)$ on $\R^{n+k}\times
[0,\infty)$. Since flat 3-chains are equipped with the size norm, (i)
follows from Allard's regularity theorem. (ii) follows again from
Corollary 2 in \cite{Simon93} combined with  \cite{KinderlehrerNirenbergSpruck78} (see also \cite{Krummel14} and for the higher codimension case
\cite{Krummel15}). Direct regularity of minimising flat chains mod 3 in $\R^3$ was shown in \cite{Taylor73}.

Corollary
\ref{Y-regular-corollary} implies that the flow
$\{\bar{\mu}_t\}_{t\geq 0}$ is $Y$-regular for $t>0$. Since $\cm=\{\mu_t\}_{t\geq
  0}$ is obtained from $\{\bar{\mu}_t\}_{t\geq 0}$ by slicing, $\cm$
is also $Y$-regular for $t>0$.
\end{proof}

\begin{proposition}\label{initial-Y-lemma}
Let $\cm$ be a $Y$-regular flow in $0<t<\infty$ such that $\cm(t)$ converges
(as Radon measures) as $t\to 0$ to $M_0$, a regular cluster.
Then $\cm$ is smooth on some interval $0<t<T$.  Furthermore, the norm of the
second fundamental form at $(x,t)$ is $o(t^{-1/2})$.
\end{proposition}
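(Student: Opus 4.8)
The plan is to run a blow-up/contradiction argument of the same flavor as the proof of Theorem~\ref{theorem1}, now combining it with the rigidity for $Y$-regular flows established in Theorem~\ref{Y-regular-theorem}. First I would observe that, since $M_0$ is a regular cluster (smooth sheets meeting in threes at $120$ degrees along smooth edges, no higher order junctions), every tangent flow to $\cm$ at a spacetime point $(x,0^+)$ in the limit is either a unit-density static plane or a static $\mathcal{Y}$. Indeed, by the local regularity theorem in \cite{White05}, $\cm$ is smooth near any point whose tangent flow is a plane, and by Theorem~\ref{Y-regular-theorem} (applied after rescaling) $\cm$ is smooth near any point whose tangent flow is a static union of three half-planes; since $\cm$ is $Y$-regular, both types of points are regular. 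So it suffices to prove the quantitative curvature bound $|A|(x,t) = o(t^{-1/2})$, which then yields smoothness on a short time interval by standard parabolic estimates (writing the sheets as graphs and bootstrapping with the $\tilde{K}_{2,\alpha}$-norm machinery and Schauder estimates, exactly as in Remark~\ref{rem:k2alphanorm}).

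For the curvature bound I would argue by contradiction: suppose there are points $X_j = (x_j, t_j)$ with $t_j \to 0$ and $\sqrt{t_j}\,|A|(X_j) \ge c > 0$. Parabolically rescale $\cm$ by $\lambda_j = t_j^{-1/2}$ around $X_j$ (or around the nearest edge point, if $X_j$ lies near an edge) to obtain flows $\tilde{\cm}^j$ defined on larger and larger backward parabolic cylinders, with $|A| \ge c$ at a fixed point and $t$-coordinate normalized. Since $M_0$ is a fixed compact regular cluster, Huisken monotonicity gives a uniform entropy bound, so the $\tilde{\cm}^j$ have Gaussian density ratios bounded by (roughly) $3/2$ on increasingly large sets, and one extracts a subsequential limit $\cm^\infty$. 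This limit is an ancient $Y$-regular flow with entropy $\le 3/2$, hence its tangent flow at $-\infty$ is a plane or a $\mathcal{Y}$; combined with the density bound and dilation-invariance at $-\infty$, the rigidity results (Lemma~\ref{first isolation lemma}, Theorem~\ref{Y-regular-theorem}) force $\cm^\infty$ to be a static plane or a static $\mathcal{Y}$, which is smooth with $|A|\equiv 0$. Now invoking the smooth convergence from Theorem~\ref{theorem1} and Theorem~\ref{Y-regular-theorem} (or the local regularity theorem of \cite{White05} in the planar case), the convergence $\tilde{\cm}^j \to \cm^\infty$ is smooth near the marked point, so $|A|(X_j) \to 0$ after rescaling, contradicting $|A| \ge c$. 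This both shows $\cm$ is smooth for $0 < t < T$ and upgrades the bound to $|A|(x,t) = o(t^{-1/2})$.

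The main obstacle I expect is the passage from the $t\to 0$ limit being $M_0$ as Radon measures to genuine control of the blow-up sequence: one must ensure that the rescaled flows $\tilde{\cm}^j$ really subconverge to an \emph{ancient} flow defined for all $t \in (-\infty, 0)$, which requires that the rescaled initial-time slices recede to $t = -\infty$, i.e.\ that $t_j / t_j = 1$ is the right normalization and that there is no loss of mass or sudden appearance of density as $t \searrow 0$. This is where $Y$-regularity and the closedness of the class under weak convergence (Corollary~\ref{Y-regular-corollary}) are essential, together with the fact that since $M_0$ is smooth and compact, the monotonicity formula gives, for every $\rho>0$, Gaussian density ratios at $X_j$ at scale $\sqrt{t_j}$ that converge to the density of the tangent object of $M_0$ at the relevant point — namely $1$ or $3/2$ — so that the limit flow has the sharp entropy bound needed to invoke the isolation lemmas. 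A secondary technical point is handling the two cases (interior point vs.\ edge point) uniformly, and near an edge translating so the marked point sits on the edge before rescaling, which is routine given the $\tilde{K}_{2,\alpha}$ framework already set up.
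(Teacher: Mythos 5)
Your overall strategy (blow-up by contradiction, normalizing $|A|\sim t^{-1/2}$, convergence to a rigid limit, then smooth convergence to derive a contradiction) matches the paper's in spirit, but there is a genuine gap in the central step. After parabolic rescaling by $\lambda_j = t_j^{-1/2}$ about $X_j=(x_j,t_j)$, the rescaled flows are defined on the time interval $[-1,\infty)$ --- with $t=-1$ corresponding to the original time $0$ --- and \emph{not} on an ancient interval $(-\infty,0)$. The initial-time slices do not recede to $-\infty$: the dilation factor is exactly matched to $t_j$, so $t=0$ lands at $t=-1$ in the rescaled picture (and any larger dilation would kill the nontrivial lower bound on $|A|$ at the marked point). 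You flag this as the ``main obstacle'' and suggest that $Y$-regularity and closedness under weak convergence resolve it, but they do not make the limit ancient. What the paper does instead is extend $\cm$ to $t=0$ by declaring $\cm(0)=M_0$, and then work with a limit flow $\cm'$ on $[-1,\infty)$ whose $t=-1$ slice is, because $M_0$ is a regular cluster, either a multiplicity-one plane or a static $\mathcal Y$.

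Once this is corrected, a second ingredient is still missing: you must show that a $Y$-regular flow on $[-1,\infty)$ whose $t=-1$ slice is a static $\mathcal Y$ remains a static $\mathcal Y$ for all later times. One cannot appeal to a ``tangent flow at $-\infty$'' since there is no $-\infty$, and the isolation lemmas (Lemma~\ref{first isolation lemma}, Lemma~\ref{second-isolation-lemma}) concern dilation-invariant flows, which $\cm'$ is not a priori. The plane case is immediate from monotonicity, but the $\mathcal Y$ case requires the paper's separate argument: if $T$ is the first time at which $\cm'(t)\ne\cm'(-1)$, $Y$-regularity rules out sudden vanishing so $\cm'(T)=\cm'(-1)$; a small disk $D$ with $\partial D$ meeting $\cm'$ transversely in three points persists to slightly later times, and Proposition~\ref{propsition triples} (the topological parity argument via Proposition~\ref{proposition-no triple}) then produces a triple-junction point $X'=(x',t')$ with $t'>T$; the rigidity case of Huisken monotonicity about $X'$ forces $\cm'$ to be self-similar on $\{-1\le t<t'\}$, hence constant, contradicting the choice of $T$. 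Your proposal does not supply this persistence-of-triple-junctions argument, and without it the blow-up limit is not identified. After that, your final step (smooth convergence via Theorem~\ref{Y-regular-theorem} and the contradiction with the curvature lower bound) agrees with the paper.
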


\begin{proof}
Let $K(\cm,X)$ denote the largest principal curvature of $\cm$ at $X$ if $\cm$
is a unit-density point or a triple junction point of $\cm$. Otherwise, let $K(\cm,X)=\infty$.

Suppose the lemma is false.  Then there is a sequence $X_i=(x_i,t_i)$ in $\cm$ with $t_i\to 0$
such that
\[
     K(\cm,X_i)\,  |t_i|^{1/2} \to k \in (0,\infty].
\]
By hypothesis, we can extend the flow $\cm$ to $t=0$ by letting $\cm(0)$ be the Radon
measure associated to $M_0$.
Translate the flow $\cm$ by $-X_i$ and dilate parabolically by $1/\sqrt{t_i}$ to get
a flow $\cm^i$ defined on the time interval $-1\le t <\infty$.  Note that
\begin{equation}\label{eq:nonflat}
  K(\cm^i,O) \to k.
\end{equation}
By passing to a subsequence, we can assume that $\cm^i$ converges to a flow $\cm'$
that is $Y$-regular for $t>-1$.

Note that $\cm'(-1)$ is either a multiplicity $1$ affine plane, or the union of three multiplicity $1$
affine halfplanes meeting at $120^\circ$ angles.

If $\cm'(-1)$ is a multiplicity $1$ plane, then, by monotonicity, $\cm'(t)$ is equal to that plane
for all $t>0$.  But that implies that the $X_i$ are regular points of $\cm_i$, a contradiction.

Thus $\cm'(-1)$ is a union of three multiplicity $1$ affine half-planes meeting at equal angles.
We claim that $\cm'(t)=\cm'(-1)$ for all $t\ge -1$.
For if not, let $T$ be the infimum of times $t$ for $\cm'(t)\ne \cm'(-1)$.
Then $\cm'(T)=\cm'(-1)$.  (This could fail if sudden vanishing occurred. However,
$\cm'$ is $Y$-regular, and therefore there is no sudden vanishing.)

From the discussion above we see that
around any point $(y,T)$ in $\cm'(T)$, which is not a triple junction
point, there is a space-time neighbourhood in which $\cm'$ is
smooth. Thus there is $t>T$ and a $(k+1)$-dimensional disk $D$ such
that $\partial D$ intersects $\cm'(t)$ transversely in exactly 3
points. Since the entropy of $\cm'$ is bounded above $\zeta$ we can
argue as in Proposition \ref{propsition triples} to see that $\cm'$ has a triple point $X'=(x',t')$ at some time $t'>T$.
By monotonicity, $\cm' \cap \{-1\le t < t'\}$ is self-similar about $X'$.
It follows that $\cm'(t)=\cm'(-1)$ for $-1<t<t'$, contradicting the choice of $T$.

We have shown that $\cm'$ is a union of three half-planes for all $t\ge -1$.
By Theorem \ref{Y-regular-theorem}, the convergence $\cm^i\to\cm'$ is smooth for times $t>-1$, which implies that
\[
  K(\cm^i,O)\to 0,
\]
contradicting~\eqref{eq:nonflat}.
\end{proof}

\begin{remark}
In exactly the same way, one can show that $\tilde{K}_{2,\alpha}(\cm,(x,t))$
is $o(|t|^{-1/2})$, and likewise for the higher H\"older norms.
\end{remark}

\begin{proof}[Proof of Theorem \ref{short-time-existence-theorem}] In the case that $M_0$ is
  orientable, this follows from Proposition~\ref{initial-Y-lemma}, since
  the constructed flow $\cm$ is $Y$-regular for $t>0$.

Now assume that $M_0$ is not orientable. Note that  each point $p
\in M_0$ has a connected neighbourhood that is orientable, and there
are exactly two orientations. We call a choice of one of those two
orientations an orientation at $p$. Let $\tilde{M}_0(p)$ be the set of
the two orientations at $p$, and let $\tilde{M}_0 = \cup_{p\in
  M_0}\tilde{M_0}(p)$.

Note that $\tilde{M}_0$ is naturally a manifold where separate sheets
are meeting smoothly along triple junctions. Note also that
$\tilde{M}_0$ is orientable and comes with an orientation. Let $U$ be an
$\eps$-neighbourhood of $M_0$ in $\R^{n+k}$ that is homotopy equivalent
to $M_0$. Then there is a double cover
$$\Pi:\tilde{U} \ra U$$
of $U$ corresponding to the double cover $\tilde{M}_0$ of $M_0$. We
can think of $\tilde{M}_0$ embedded in $\tilde{U}$. 

Let $\tilde{T}_0$ be the mod 3 cycle given $\tilde{M}_0$ with its
orientation and let $\sigma: \tilde{U}\ra \tilde{U}$ be the involution
that switches the two sheets of $\tilde{U}$, i.e. $\sigma(p)$ is the
unique $q\neq p$ such that $\Pi(q)=\Pi(p)$. Note that $\sigma
:\tilde{M_0}\ra \tilde{M}_0$ is orientation-reversing, so that
$$\sigma_\#\tilde{T}_0 = - \tilde{T}_0\, .$$  
  
Ilmanen's construction in space-time $\tilde{U}\times
\R$, with the restriction that one works only with flat chains mod 3 $\tilde{T}$
satisfying
$$ \sigma_\#\tilde{T} =  - \tilde{T}\, ,$$
yields a Brakke flow in $\tilde{U}$ which (as varifolds) is
invariant under $\sigma$ and thus descends to a Brakke flow with
initial surface $M_0$. From Proposition \ref{initial-Y-lemma} it follows
again that this Brakke flow is smooth for short time. 

The convergence of $\cm(t) \ra M_0$ as $t\ra 0$ in $C^1$ follows from the proof of
Proposition \ref{initial-Y-lemma}. The smooth convergence away from the
edges of $M_0$ follows from Corollary \ref{thm:initial_smooth.2}.
\end{proof}

\begin{corollary}\label{maximal-existence-corollary}
 Assume that codimension $k=1$. Let $M_0$ be as before, and $(M_t)_{0\leq t <T}$ be the maximal smooth
  evolution of $M_0$ as constructed above. Assume that $T<\infty$. Then at $T$ either 
$$\lim_{t\ra T}\sup_{M_t}|A| = \infty$$
   or two triple junctions collide. 
\end{corollary}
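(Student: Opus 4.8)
The plan is to argue by contrapositive: assume that $T<\infty$, that $\sup_{M_t}|A|$ stays bounded as $t\to T$ (say by $\Lambda$), and that no two triple junctions collide, and deduce that the flow extends smoothly past $T$, contradicting maximality. First I would use the curvature bound $\sup_{M_t}|A|\le\Lambda$ together with parabolic interior estimates to obtain uniform bounds on all higher derivatives $|\nabla^m A|$ on the \emph{regular} part of $M_t$, on any region whose parabolic distance to the triple edges is bounded below; this is standard Ecker--Huisken/Brakke-type interior regularity. To control the behaviour near the triple edges, I would invoke the $\tilde K_{2,\alpha}$-norm machinery of Definition \ref{def:k2alphanorm} and Remark \ref{rem:k2alphanorm}: one needs to show that $\tilde K_{2,\alpha}(\cm,X)$ stays uniformly bounded along the triple edges as $t\to T$. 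Given such a bound, Remark \ref{rem:k2alphanorm} produces a parametrisation showing the cluster is uniformly $C^{2,\alpha}$-close (after rescaling) to $\mathcal Y$ in a fixed-size parabolic neighbourhood of each edge point, and then the higher-regularity version of Proposition \ref{mainprop} (the authors note one may replace $C^{2,\alpha}$ by any $C^{m,\alpha}$) gives uniform $C^{m,\alpha}$ bounds for all $m$ up to time $T$.

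The key remaining point is therefore the uniform bound on $\tilde K_{2,\alpha}$ along the triple edges as $t\nearrow T$. I would argue this by contradiction and blow-up, closely following the proof of the Claim inside the Proof of Theorem \ref{theorem1}: if $\tilde K_{2,\alpha}(\cm,X_j)\to\infty$ along spacetime points $X_j$ on triple edges with $t_j\to T$, rescale by $s_j=\tilde K_{2,\alpha}(\cm,X_j)\to\infty$ centred at $X_j$; the rescaled flows $\cm^j$ have uniformly bounded curvature away from $O$ (since the unscaled curvature is bounded by $\Lambda$ and $s_j\to\infty$), Gaussian density ratios bounded by $3/2$ in the limit (the bound $k=1$ and the absence of collisions rule out higher multiplicity and higher-order junctions forming in the limit), and hence converge to a limit flow $\cm^\infty$ that is smooth with bounded curvature away from $O$ and has a triple edge through $O$. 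By the monotonicity formula $\cm^\infty$ is backwards self-similar, hence (rotating) equals $\mathcal Y$; then Remark \ref{rem:k2alphanorm} and Corollary \ref{lem:maincor} force $\tilde K_{2,\alpha}(\cm^j,O)\to 0$, contradicting the normalisation $\tilde K_{2,\alpha}(\cm^j,O)=1$.

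Once uniform $C^{m,\alpha}$ bounds up to time $T$ are in hand, the flow $M_t$ converges in $C^{m,\alpha}$ as $t\to T$ to a limit cluster $M_T$ which is again a smooth cluster with triple edges and no higher-order junctions (using $k=1$ and the no-collision hypothesis to ensure the edges do not degenerate), and short-time existence — Theorem \ref{short-time-existence-theorem} applied with initial data $M_T$ — extends the flow past $T$, contradicting the maximality of $T$. The main obstacle I anticipate is the blow-up step for the $\tilde K_{2,\alpha}$-bound: one must carefully justify that no higher-order junctions and no higher-multiplicity sheets can appear in the rescaled limit, which is exactly where the codimension hypothesis $k=1$ and the hypothesis that triple junctions do not collide are used (an incipient collision of two triple edges would, after rescaling, produce a limit flow that is not a single $\mathcal Y$, and higher-order junctions have density strictly above $3/2+\eps$ by the footnote in the proof of Theorem \ref{theorem1}, so they are excluded by the density bound).
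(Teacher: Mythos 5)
Your proposal takes a genuinely different route from the paper's. You aim to establish uniform $\tilde K_{2,\alpha}$ (and then $C^{m,\alpha}$) estimates up to time $T$ by a blow-up argument and then to extend the flow via short-time existence. The paper does none of this: it works directly with the tangent flows of the Brakke flow $\cm$ at time $T$. Concretely, the paper observes that the strong maximum principle (this is where $k=1$ enters, twice) keeps $M_t$ embedded for $t<T$; the curvature bound plus the no-collision hypothesis forces every tangent flow at time $T$ to be a static plane or static union of three half-planes for $t<0$; the strong maximum principle again forces unit density; the no-sudden-vanishing property of Ilmanen's construction extends the static behaviour to all $t$; and then $Y$-regularity of $\cm$ immediately says $\cm$ is smooth in a space-time neighbourhood of time $T$, contradicting maximality. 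No blow-up of $\tilde K_{2,\alpha}$, no Schauder bootstrap, and no re-application of short-time existence are needed.

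Your blow-up step has a concrete gap. You normalize so that $\tilde K_{2,\alpha}(\cm^j,O)=1$, but Remark~\ref{rem:k2alphanorm} and Corollary~\ref{lem:maincor} require smallness of $\tilde K_{2,\alpha}$ on a whole neighbourhood of the triple edge, not a bound of size $1$ at the single point $O$. The Claim inside the proof of Theorem~\ref{theorem1} gets around this by (i) working with the weighted norm $\tilde K_{2,\alpha;U_j}$ on a carefully chosen expanding sequence of domains, (ii) choosing $X_j$ at (near-)maximal points of this weighted quantity, and (iii) dilating so that $\tilde K_{2,\alpha}(\cm^j,O)=\delta$ is \emph{small}, from which the doubling estimate yields $\tilde K_{2,\alpha}(\cm^j,\cdot)\le 4\delta$ on every compact set along the rescaled triple edge. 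You cite this proof but do not carry out these steps, and without the near-maximality you cannot bound $\tilde K_{2,\alpha}$ away from $O$. Also, your statement that the rescaled flows have ``uniformly bounded curvature away from $O$'' understates the point: since $|A|\le\Lambda$ and $s_j\to\infty$, the rescaled curvature tends to zero everywhere, which is what makes the limit static, but you should keep track of which hypothesis (curvature bound vs.\ no collision vs.\ density ratios $\to 3/2$) rules out multi-sheeted or higher-order junction limits; the strong maximum principle is what the paper uses for this, and you do not invoke it.

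Finally, your last step is logically the wrong move. Applying Theorem~\ref{short-time-existence-theorem} to $M_T$ produces \emph{some} smooth flow from $M_T$, not an extension of the flow $\cm$ constructed by elliptic regularisation; absent a uniqueness theorem for smooth MCF with triple edges (which this paper does not prove), this does not contradict the maximality of $T$ for $\cm$. The correct conclusion from your estimates would instead be: if $M_t\to M_T$ smoothly, then every point of $\cm(T)$ has Gaussian density $1$ or a $Y$-cone tangent flow, and since $\cm$ is $Y$-regular it is smooth in a space-time neighbourhood of each such point, so $T$ was not maximal. That is exactly the paper's final step, and it should replace the appeal to Theorem~\ref{short-time-existence-theorem}.
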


\begin{proof}
  Note that a bound on the supremum of the second fundamental form
  along the surfaces in the cluster implies a bound on the curvature
  of the triple edges, since the sheets meet under a 120 degree
  condition.

Assume $\lim_{t\ra T}\sup_{M_t}|A| < \infty$ and no triple junctions
collide as $t\ra T$. We want to argue that all tangent flows of the constructed Brakke flow $\cm=\{\mu_t\}_{t\geq 0}$ at $T$ are either static unit density planes or static unions of 3 unit density halfplanes. 

Note that the strong maximum principle implies that $M_t$ is embedded for $0<t<T$. Since the second fundamental form is uniformly bounded, and no triple junctions can collide, we see that all tangent flows at $T$ are either static planes or static unions of 3 halfplanes for $t<0$ (i.e.~they are quasi-static). The strong maximum principle again implies that the tangent flows are unit density. Furthermore, by Ilmanen's construction there is no sudden vanishing and thus all tangent flows at $T$ are either static unit density planes or static unions of 3 unit density halfplanes for all $t$. Since $\cm$ is $Y$-regular this implies that $T$ was not maximal. 
\end{proof}

 \section{Short-time existence for general initial clusters}\label{section-general-initial-clusters}
In this section we show that it is possible to modify the approach in the previous section, using flat chains with coefficients in a finite group, to prove existence of a Brakke flow, starting from a general surface cluster. For immiscible fluids in codimension one, a similar idea was used in  \cite{White96}.

We consider a $n$-dimensional surface cluster $M_0 \subset \R^{n+k}$ as in the introduction, but only requiring that there exists a closed set $Z\subset M_0$ with $\ch^{n-1}(Z) =0$, such that all $l$-dimensional faces, for $l<n-1$, are contained in $Z$ and away from $Z$ the $n$-faces meet in threes along their common $n-1$-dimensional faces.

\begin{remark}
 We note that the conditions on $M_0$ allow for higher order junctions (contained in $Z$) and that along the $(n-1)$-dimensional edges the separate sheets do not necessarily meet under equal angles.
\end{remark}

We now consider the $(k-1)$-th homology group 
$$G = H_{k-1}(\R^{n+k} \setminus M_0, \mathbb{Z}_2)$$
and give every nonzero element in the group the norm $1$. We note that $G$ is finite since $M_0$ consists of a finite union of $n$-manifolds. We aim to make $M_0$ into a $n$-chain with coefficients in $G$. Since every element in $G$ has order 2, we don't have to assign orientations, just multiplicities. We do this as follows:

Consider a face $M^i_0$ as above. Pick a point $p \in M^i_0 \setminus \partial M^i_0$ and for $\varepsilon >0$ sufficiently small, consider the $(k-1)$-sphere
$$ C =\{p+\varepsilon v\, | \, |v|=1\text{ and } v \text{ is normal to } M^i_0 \text{ at }p \}\ .$$
Let $g_i$ be the element in $G$  corresponding to that sphere. That is the multiplicity we assign to $M^i_0$. This makes $M_0$ into a $n$-chain $[M_0]$ with multiplicities in $G$.

We have to check that $[M_0]$ is a cycle. It is easy to check that the sum of the multiplicities of the faces at each $(n-1)$-dimensional edge is $0$. Thus $\partial [M_0]$ is supported in $Z$. But a $(n-1)$ chain supported in a set with $\ch^{n-1}$-measure zero vanishes. Thus $[M_0]$ is a cycle. Let $\bar{\mu}$ be the radon measure associated to $[M_0]$, i.e.
$$\bar{\mu}(\varphi) = \sum_{i} \int_{M^i_0} \varphi |g_i|\, d\ch^{n} $$
for any $\varphi \in C^0_c(\R^{n+k})$.

\begin{theorem}\label{thm:existence_general}
Let $M_0$ be as above and $[M_0]$ the corresponding flat chain with multiplicities in $G$ as above. Then there exists a $Y$-regular $n$-dimensional Brakke flow $\{\mu_{t}\}_{t\geq 0}$ such that $\mu_0 = \bar{\mu}$, with the following property:\\[-4.2ex]
\begin{itemize}
 \item[(i)] If $x_0 \in M^i_0 \setminus \partial M^i_0$ for some $i$ and $|g_i| = 1$, then $\{\mu_t\}_{t\geq 0}$ is a smooth unit density mean curvature flow with triple edges in a space-time neighbourhood of $(x_0,0)$.
 \item[(ii)] If $x_0$ is on a $(n-1)$-dimensional edge of $M_0$, where in a neighbourhood of $x_0$ three sheets $M^{i_1}_0, M^{i_2}_0, M^{i_3}_0$ meet along a common  $(n-1)$-dimensional edge under equal angles and $|g_{i_1}|=|g_{i_2}|=|g_{i_3}|=1$, then $\{\mu_t\}_{t>0}$ is a smooth unit density mean curvature flow with triple edges in a space-time neighbourhood of $(x_0,0)$ and the initial cluster $M_0$ is locally attained in $C^1$.
\end{itemize}
\end{theorem}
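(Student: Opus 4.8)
The plan is to adapt Ilmanen's elliptic regularisation scheme of Theorem \ref{matching-motion} to flat chains with coefficients in the finite group $G = H_{k-1}(\R^{n+k}\setminus M_0,\mathbb{Z}_2)$, exactly as in Section \ref{section-short-time} but with $\mathbb{Z}_3$-coefficients replaced by $G$-coefficients. First I would run the construction: since $G$ is finite and equipped with the norm giving every nonzero element size $1$, the compactness theorem for flat chains with coefficients in a normed abelian group (see the reference to \cite{White96}, \S 3) guarantees the existence of minimisers $P^\eps$ of the elliptic translator functional $I^\eps$ with $\partial P^\eps = [M_0]$ (where $\R^{n+k}$ is identified with the zero slice of $\R^{n+k}\times\R$ and $[M_0]$ is the $G$-cycle constructed above). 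Passing to a subsequence $\eps_i\to 0$ and using Ilmanen's compactness theorem for Brakke flows yields a $z$-invariant limit Brakke flow whose slices define the desired family $\{\mu_t\}_{t\ge0}$ with $\mu_0 = \bar\mu$. The verification that $\mu_0 = \bar\mu$ is the one genuinely new point compared to Section \ref{section-short-time}: because $[M_0]$ need not meet itself under equal angles along edges, and because some faces may receive multiplicity with $|g_i|$ not equal to $1$, one must check that the initial measure of the translator minimisers still converges to $\mu_{[M_0]\times\R}$; this follows from the fact that $[M_0]\times\R$ is itself a competitor for $I^\eps$ up to a controlled error, together with lower semicontinuity of mass, exactly as in Ilmanen's original argument. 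I would also verify $\partial[M_0]=0$ using the computation already given in the excerpt — the edge multiplicities sum to zero and a $(n-1)$-chain supported in the $\mathcal H^{n-1}$-null set $Z$ vanishes.

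Next I would establish $Y$-regularity of the approximating flows $\{\mu^\eps_t\}_{t>0}$, following the proof of Lemma \ref{Y-regular-lemma} verbatim. The $P^\eps$ are area-minimising flat $G$-chains in the conformally flat metric $\bar g = e^{-2z/((k+1)\eps)}(g\oplus dz^2)$; equipped with the size norm, unit-density regularity follows from Allard's theorem, and regularity near points with a static-$Y$ tangent cone follows from Corollary 2 in \cite{Simon93} combined with \cite{KinderlehrerNirenbergSpruck78} (and \cite{Krummel15} in higher codimension). The only subtlety is that minimising flat chains mod $m$ for general $m$, or with coefficients in a general finite group, can have additional singular models; but the entropy/density bound coming from the monotonicity formula together with the hypothesis $\text{entropy} < \zeta$ rules these out in the relevant density range, so the same two regularity inputs suffice. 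Since the translators are downward-translating solutions, $Y$-regularity for all positive times follows from regularity of a single time-slice, and then Corollary \ref{Y-regular-corollary} passes $Y$-regularity to the limit flow $\bar\mu$, hence to $\{\mu_t\}_{t\ge0}$ by $z$-invariance and slicing.

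It remains to prove conclusions (i) and (ii), i.e.\ the local smoothness and the $C^1$-attainment of the initial cluster near good points. For (i): if $x_0\in M^i_0\setminus\partial M^i_0$ with $|g_i|=1$, then in a small ball around $x_0$ the chain $[M_0]$ is a single embedded sheet of multiplicity one, so the Gaussian density of $\mu_0$ at $(x_0,0)$ is $1$; by upper semicontinuity of Gaussian density and unit-regularity, the flow is a smooth multiplicity-one mean curvature flow in a space-time neighbourhood. For (ii): near such an $x_0$ the cluster $[M_0]$ is locally three multiplicity-one sheets meeting at equal angles, so $\mu_0$ has Gaussian density $3/2$ there, the blow-up at $(x_0,0)$ is a static $\mathcal Y$, and $Y$-regularity gives a smooth space-time neighbourhood; the $C^1$-attainment of $M_0$ is then exactly the conclusion of Proposition \ref{initial-Y-lemma} applied locally — one runs the rescaling/blow-up argument there, using that the only possible limits are a static plane or a static $\mathcal Y$, to deduce $\tilde K_{2,\alpha}(\cm,(x,t)) = o(|t|^{-1/2})$ locally, which yields convergence in $C^1$ up to $t=0$. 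The main obstacle I anticipate is precisely the regularity theory for minimising flat chains with coefficients in $G$ near static-$Y$ models: one needs the Simon–Kinderlehrer–Nirenberg–Spruck–Krummel package to apply in this coefficient setting, and to confirm that the entropy bound eliminates any exotic minimising cones that $G$-coefficients might otherwise permit — everything else is a faithful transcription of Sections \ref{section-short-time} and \ref{section-extension}.
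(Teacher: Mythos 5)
Your proposal follows the paper's proof essentially verbatim: adapt Ilmanen's elliptic regularisation to flat chains with coefficients in $G$, establish existence of minimisers $P^\eps$ via the compactness theorem for flat chains in a normed finite group (following \cite{White96}), deduce $Y$-regularity of the approximating translators by the argument of Lemma \ref{Y-regular-lemma}, pass $Y$-regularity to the slice flow $\{\mu_t\}$ by Corollary \ref{Y-regular-corollary}, and obtain the initial regularity near good points from Proposition \ref{initial-Y-lemma}. One small imprecision in part (i): you invoke ``upper semicontinuity of Gaussian density and unit-regularity'' directly at $(x_0,0)$, but Gaussian density is not defined at the initial time of the flow; the paper instead combines Proposition \ref{initial-Y-lemma} (which yields $C^1$ attainment and a curvature bound $o(t^{-1/2})$) with Corollary \ref{thm:initial_smooth.2} from the appendix (a unit-density Brakke flow that is graphical in $C^1$ with smooth initial data is smooth up to $t=0$) to obtain the smooth space-time neighbourhood of $(x_0,0)$ claimed in (i). Your treatment of part (ii) correctly reduces to the blow-up argument of Proposition \ref{initial-Y-lemma}.
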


\begin{remark}
 If a face $M^i_0$ is assigned zero multiplicity, it vanishes instantly under the flow. As an example consider $M_0 \subset \R^{n+1}$ consisting of two circles, with a line segment joining them. Each circle gets assigned a nonzero multiplicity, but the segment gets multiplicity 0. Therefore the segment vanishes instantly and we get two shrinking circles. 

\end{remark}

\begin{proof} We have shown that $[M_0]$ is a cycle. The existence proof follows completely analogously as in Section \ref{section-short-time}, but we work with flat chains with coefficients in $G$ instead of flat $3$-chains. For the existence of the minimisers $P^\eps$, see again \cite{White96}.

As in Lemma \ref{Y-regular-lemma} it follows that the approximating flows $\{\mu^\eps_t\}_{t\geq 0}$ are $Y$-regular for $t>0$, and thus $\{\mu_t\}_{t\geq 0}$ is $Y$-regular for $t>0$. The statements about the initial regularity of the flow follows again from Proposition \ref{initial-Y-lemma} and Corollary \ref{thm:initial_smooth.2}. 
\end{proof}

\section{Smooth short-time existence for non-regular initial networks}\label{section-nonregular-network}
In this chapter we consider mean curvature flow with triple edges for curves in arbitrary codimension. We call such a flow a {\it network flow}.

We call a network of curves  $N_0$ in $\R^{1+k}$ {\it non-regular}, if it has the form
$$N_0 = \cup_{i=1}^N \gamma^i_0\, ,$$
where the $\gamma^i_0$ are smooth, embedded curves, which are disjoint away from their endpoints. We assume that they meet in triples at their endpoints, but we do not require that the exterior unit normals add up to zero. Nevertheless we still assume that at each triple point the exterior unit normals are pairwise distinct, i.e.~the angle between any two curves meeting at a triple point is greater than zero. We call such a point a {\it non-regular} triple point. We aim to show that there exists a smooth network flow, starting from such a closed non-regular initial network.

\begin{lemma}\label{network-tangent-flow-lemma} Let $\mathcal{N}$ be a smooth self-similarly shrinking network flow on $\R^{1+k}\times(-\infty,0)$ with Gaussian density less than $2$, at most one triple junction and no closed loops. Then $\mathcal{N}$ is, up to a rotation, a static line, or a $\mathcal{Y}$.
\end{lemma}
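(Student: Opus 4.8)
The plan is to classify smooth self-shrinking network flows in $\R^{1+k}$ that have Gaussian density $<2$, at most one triple junction, and no closed loops. First I would recall that a self-similarly shrinking network flow at the time slice $t=-1$ is a network whose curves are geodesics for the Gaussian (shrinker) metric $e^{-|x|^2/(2(1+k))}g_{\text{euc}}$, i.e.\ solutions of the shrinker equation $\vec{\kappa} = \tfrac{x^\perp}{2}$, meeting in triples (where they meet). The absence of closed loops is essential: each $\gamma^i$ is either a full line, a ray, or a compact arc; if any were a closed loop, Grayson-type results would force it to be a shrinking circle, raising the density, or produce more topology than allowed.

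Next I would split into cases on the number of triple junctions. If there are no triple junctions, then $\mathcal{N}$ is a single smooth properly embedded self-shrinking curve in $\R^{1+k}$. Such curves, as shown by Abresch--Langer in the plane and by the corresponding higher-codimension analysis (or directly: a noncompact self-shrinking curve with finite entropy less than that of the circle must be unbounded and have vanishing curvature at infinity, and the only one is a line through the origin), force $\mathcal{N}$ to be a line through the origin. The density of any other closed Abresch--Langer curve, or of a multiply-covered line, exceeds the density of a shrinking circle, contradicting the hypothesis. So in this case $\mathcal{N}$ is, up to rotation, a static line.

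If there is exactly one triple junction $p$, then by the monotonicity/self-similarity the triple point must sit at the origin (a smooth network flow that is backwards self-similar and has a triple junction must be self-similar about that point, forcing it to be the center of the shrinker). Then $\mathcal{N}$ consists of three self-shrinking arcs emanating from $0$. Since there are no loops, each of the three pieces is a noncompact embedded self-shrinking ray starting at $0$; each such ray, being a shrinker that is unbounded with curvature decaying at infinity and emanating from the origin, must be a half-line through the origin. (A bounded shrinking arc from $0$ to another point would have to close up or hit another junction, both excluded.) Thus $\mathcal{N}$ is a union of three half-lines through the origin, i.e.\ up to rotation a $\mathcal{Y}$. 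The density bound $<2$ rules out configurations where, say, two of the half-lines are collinear (giving a line plus a ray, whose density at the origin is $3/2$ and is in fact of the form line-with-multiplicity, still fine) — more importantly it rules out higher-multiplicity versions and forces unit density.

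The main obstacle I expect is the rigidity step for the arcs: showing that an embedded self-shrinking curve in $\R^{1+k}$ emanating from the origin, which is noncompact and has no self-intersections and Gaussian density below that of the shrinking circle, must be a straight half-line. In codimension one this is classical Abresch--Langer theory; in higher codimension one needs the analogous classification of complete self-shrinking curves, which follows from the fact that such a curve has polynomial volume growth and the drift-Laplacian estimate forces the curvature to be identically zero unless the curve closes up (raising density). I would invoke the known classification of self-shrinking curves (the only complete embedded noncompact ones are lines) together with Huisken's monotonicity to pin down the density, and handle the triple-junction center via the equality case of monotonicity exactly as in Proposition~\ref{propsition triples} and Lemma~\ref{first isolation lemma} above.
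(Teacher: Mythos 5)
Your proposal has the right overall structure, and you correctly identify the crux of the argument — and correctly identify it as the piece you have not actually filled in. You flag ``the rigidity step for the arcs'' — showing that an embedded self-shrinking curve in $\R^{1+k}$ (rather than $\R^2$) must be a straight line or half-line through the origin — as the main obstacle, and then gesture at a higher-codimension generalisation of Abresch--Langer via ``polynomial volume growth and the drift-Laplacian estimate.'' That gesture is exactly where your proof has a gap: you have not reduced the codimension-$k$ problem to anything you can actually cite, and the estimate you sketch is both unnecessary and not clearly set up.

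The paper's proof closes this gap with a much cleaner observation that your argument misses. The shrinker equation $\vec{k} = -x^\perp/2$ for a curve in $\R^{1+k}$ is a second-order ODE for the curve, and a solution together with its initial position and velocity determines the curve; in particular, any solution is contained in the $2$-plane through the origin spanned by its initial position and tangent. This immediately reduces the classification of each branch to the planar Abresch--Langer classification, regardless of the ambient codimension. Once each branch is an embedded Abresch--Langer curve with no closed loops, it is forced to be a straight line through the origin (or a ray of one), and the case split on zero versus one triple junction finishes the proof just as you outline. You should also be more careful in the one-junction case: the reason the junction must be at the origin is not a separate monotonicity argument, but rather that each of the three rays lies in a line through the origin and all three lines must contain the common endpoint $p$; if $p\ne 0$ the three lines coincide and the $120^\circ$ condition fails. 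So the conclusion you want falls out of the planarity reduction, which is the key idea your proposal lacks.
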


\begin{proof}
 Since the Gaussian density of $\mathcal{N}$ is less than two, each branch $\gamma_i$ of $N_{-1}$ has multiplicity one and is embedded away from its endpoints. Each $\gamma_i$ is a smooth, embedded curve, satisfying
 $$\vec{k}= -\frac{x^\perp}{2}\, .$$
Since this is an ODE of second order, $\gamma_i$ is contained in a 2-dimensional plane through the origin and a member of the family of self-similarly shrinking solutions to curve shortening flow in the plane, classified by Abresch-Langer \cite{AbreschLanger86}. Since $N_{-1}$ contains no loops, $\gamma_i$ is diffeomorphic either to a line or a half-line. Since $\gamma_i$ is embedded, the classification of Abresch-Langer implies that $\gamma_i$ is contained in a line through the origin. In the case that $\mathcal{N}$ has no triple junctions, $N_{-1}$ can only be a line through the origin. In the other case it can only be the union of three half lines, meeting at equal angles at the origin.
\end{proof}

\begin{theorem}\label{short-time-existence-nonregular-network-theorem}
 Let $N_0$ be a smooth, compact, non-regular network in $\R^{1+k}$. Then there exists a $T>0$ and a smooth solution to the network flow $(N_t)_{0<t<T}$ such that $N_t \ra N_0$ in $C^0$. The convergence is in $C^1$ in a neighbourhood of each initial, regular triple junction and in $C^\infty$ away from all triple junctions.  Furthermore, the norm of the curvature is $O(t^{-1/2})$.
\end{theorem}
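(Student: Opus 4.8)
\textbf{Proof proposal for Theorem \ref{short-time-existence-nonregular-network-theorem}.}
The plan is to imitate Ilmanen's elliptic regularisation construction of Section \ref{section-short-time}, but now in the setting $n=1$, which produces a Brakke flow $\{\mu_t\}_{t\ge 0}$ starting from $N_0$; the main new point is to show that this flow is smooth for short time even though the initial triple points are \emph{not} regular (i.e.~the three exterior unit normals need not sum to zero). First I would set up the construction: since $N_0$ need not be orientable and need not bound, I would use flat chains with coefficients in the finite group $G=H_{k-1}(\R^{1+k}\setminus N_0,\mathbb{Z}_2)$, exactly as in Section \ref{section-general-initial-clusters}, so that $[N_0]$ is a cycle whose support contains $N_0$ and Ilmanen's minimisers $P^\eps$ of $I^\eps$ exist by the compactness theorem for flat chains with coefficients in $G$. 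By the argument of Lemma \ref{Y-regular-lemma} the approximating translating flows $\{\mu^\eps_t\}$ are $Y$-regular for $t>0$ (now using that $1$-dimensional stationary varifolds close to a line, resp.~a $Y$-cone, are regular, resp.~consist of three $C^{1,\alpha}$ arcs meeting at a $C^{1,\alpha}$ vertex), so the limit $\cm=\{\mu_t\}_{t>0}$ is $Y$-regular for $t>0$ by Corollary \ref{Y-regular-corollary}.

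Next I would establish initial regularity. The key is that the only self-similarly shrinking tangent flows at an initial point that can occur are the ones classified in Lemma \ref{network-tangent-flow-lemma}: a static multiplicity-one line, or a static $\mathcal{Y}$. To see that these are the only possibilities, I would blow up at a point $(x_i,t_i)$ with $t_i\to 0$ where the curvature (or the $\tilde K_{2,\alpha}$-norm) is large relative to $|t_i|^{-1/2}$, exactly as in the proof of Proposition \ref{initial-Y-lemma}; passing to a subsequential limit $\cm'$ which is $Y$-regular for $t>-1$, the monotonicity formula forces $\cm'$ to be self-similar about some point, hence $\cm'(-1)$ is one of the configurations in Lemma \ref{network-tangent-flow-lemma}, and then $\cm'$ is static and equal to that configuration for all $t\ge -1$. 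By Theorem \ref{Y-regular-theorem} (the $Y$-regular local regularity theorem, in the case $n=1$) the convergence $\cm^i\to\cm'$ is smooth, so the rescaled curvature at $O$ tends to $0$, contradicting the choice of $(x_i,t_i)$. This yields that $\cm$ is smooth on $0<t<T$ for some $T>0$, with $|A|(x,t)=o(t^{-1/2})$, and by the remark after Proposition \ref{initial-Y-lemma} also $\tilde K_{2,\alpha}(\cm,(x,t))=o(t^{-1/2})$, giving the stated curvature bound.

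It then remains to show that $N_t\to N_0$ in the asserted senses as $t\to 0$. Away from all triple points of $N_0$, $N_0$ is locally a smooth embedded arc; the $o(t^{-1/2})$ curvature bound together with standard interior estimates for the flow (as in Corollary \ref{thm:initial_smooth.2}) upgrades the weak convergence $\mu_t\to\mu_{N_0}$ to $C^\infty$ convergence there. In a neighbourhood of a \emph{regular} triple point the argument of Proposition \ref{initial-Y-lemma} gives $C^1$ convergence. The one genuinely new case is a neighbourhood of a \emph{non-regular} triple point: here I would argue that, no matter what the initial angles are, the flow instantly forms a \emph{regular} (equal-angle) triple junction — i.e.~for $t>0$ arbitrarily small the flow near that vertex is a smooth mean curvature flow with a $120^\circ$ triple edge, only $C^0$-close to $N_0$ at $t=0$. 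This follows from the blow-up analysis above: at such a point any tangent flow at $(x_0,0)$ is a static line or a static $\mathcal{Y}$, and since three distinct arcs abut $x_0$ the tangent flow must be the static $\mathcal{Y}$, which by $Y$-regularity means the flow is a smooth equal-angle triple junction in a punctured space-time neighbourhood, attaining $N_0$ only in $C^0$ at the vertex. I expect this last point — controlling the flow in $C^0$ up to $t=0$ near a non-regular vertex and ruling out, e.g., sudden vanishing or extra junctions there — to be the main obstacle, and the essential tools for it are Ilmanen's no-sudden-vanishing property of the elliptic-regularisation flow, the density bound (entropy $<2$, by monotonicity and the absence of closed loops in the relevant blow-ups), and Lemma \ref{network-tangent-flow-lemma} restricting the tangent-flow structure.
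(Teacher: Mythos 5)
There is a genuine gap at precisely the step you yourself flag as ``the main obstacle,'' and it cannot be closed by the method you propose. The blow-up argument of Proposition~\ref{initial-Y-lemma} relies crucially on the fact that, for a \emph{regular} initial cluster, any parabolic blow-up of the extended flow at a point $(x_0,0)$ has time-$(-1)$ slice equal to a multiplicity-one plane or a $120^\circ$ union of three half-planes, which then forces the blow-up to be static and yields $|A|=o(t^{-1/2})$. At a \emph{non-regular} vertex $x_0$ of $N_0$ this fails: the tangent cone of $N_0$ at $x_0$ is three rays meeting at unequal angles, so the blow-up of the flow at $(x_0,0)$ has negative-time slices equal to that non-stationary cone, and for $t>0$ it is a self-expanding network coming out of it --- not a static $\mathcal{Y}$. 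In particular there is no contradiction with $K(\cm^i,O)\to k\in(0,\infty]$, and your conclusion $|A|(x,t)=o(t^{-1/2})$ is actually false: the self-expander limit shows the curvature near a non-regular vertex is genuinely of order $t^{-1/2}$, which is exactly why the theorem asserts only $O(t^{-1/2})$.

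The paper's route is different and avoids this. It does \emph{not} run elliptic regularisation directly on $N_0$; instead it first replaces $N_0$ by regular networks $N_0^i$ obtained by modifying $N_0$ in a ball $B_{s_i}(0)$ (e.g.~gluing in a self-expander of the tangent cone at scale $s_i\to 0$), applies the already-proved regular short-time existence (Theorem~\ref{short-time-existence-theorem}) to get $Y$-regular flows $\mathcal{N}_i$, and then establishes three \emph{uniform} (in $i$) facts on a fixed time interval $(0,T)$: Gaussian density ratios $\le 2-\varepsilon/2$ in $B_r(0)$ and $<3/2$ on the annulus (so no extra triple junctions enter), any tangent flow is a static line or $\mathcal{Y}$ via Lemma~\ref{network-tangent-flow-lemma} and a generalisation of \cite[Lemma 8.5]{IlmanenNevesSchulze14} (which rules out closed loops), and a uniform curvature bound $C/t^{1/2}$ from \cite[Theorem 8.8]{IlmanenNevesSchulze14}. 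Passing $i\to\infty$ then gives the flow and the $O(t^{-1/2})$ bound; $C^0$ convergence at the non-regular vertex follows because the $C/t^{1/2}$ curvature bound controls the speed of the triple junction. Your plan of directly constructing the flow via flat $G$-chains is fine as far as it goes, but without the approximation by regular data and the loop-ruling/density argument you have no mechanism to identify the tangent flows at a non-regular vertex and hence no way to get the $O(t^{-1/2})$ bound or smoothness for $t>0$ near that vertex.
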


\begin{proof}
To simplify notation we assume that $N_0$ has only one non-regular triple junction at the origin. The general case follows then easily by performing the following approximation scheme simultaneously at each non-regular triple junction.

First note that there exist $r, \tau, \varepsilon>0$ with the following properties 
\begin{enumerate}
 \item $N_0 \cap B_{r}(0)$ consists of three curves, close to three half-lines, meeting at the origin.
 \item For all $(x,t)\in B_{r}(0)\times (0,\tau)$ it holds
 \begin{equation*}\label{eq:network.1}
 \int_{N_0} \rho_{x,t}(\cdot,0) \, d\mathcal{H}^1 \leq 2-\varepsilon\, .
 \end{equation*}
\item For all $(x,t) \in (B_{2r}(0)\setminus B_{r}(0))\times (0,\tau)$
 \begin{equation*}\label{eq:network.2}
 \int_{N_0} \rho_{x,t}(\cdot,0) \, d\mathcal{H}^1 \leq 3/2-\varepsilon\, ,
 \end{equation*}
\end{enumerate}
where $\rho_{x,t}(\cdot,0)$ is the backwards heat kernel in the monotonicity formula, centred at $(x,t)$.

For a sequence $s_i \ra 0$, $0<s_i<r$ we modify $N_0$ in $B_{s_i}(0)$ such that the three curves are meeting at equal angles to obtain a regular initial network $N_0^i$.  This can for example be done by glueing in, at a sufficiently small scale, a self-expanding network coming out of the tangent cone at $0$ of $N_0$. We can further assume that $(1)$ and $(2)$ continue to hold with $\varepsilon$ replaced by $\varepsilon/2$ and that $N^i_0 \ra N_0$ in $C^0$. 

As in the proof of Theorem \ref{short-time-existence-theorem} there exist $Y$-regular flows $\mathcal{N}_i$ starting at 
$N^i_0$ which are smooth on $0<t<T_i$. Note first that the proof of Proposition~\ref{initial-Y-lemma} implies that there is $T'>0$ such that $\mathcal{N}_i$ is smooth for $0<t<T'$ outside of $B_{r}(0)\times (0,\tau)$ for all $i$ with curvature bounded by $C/t^{1/2}$. Let $T:=\min\{\tau,T'\}$. The monotonicity formula together with $(1)$ and $(2)$ implies
\begin{enumerate}
 \item[(4)] The Gaussian density ratios of $\mathcal{N}_i$ are bounded above by $2-\varepsilon/2$ in $B_{r}(0)\times (0,T)$.\\[-1ex]
 \item[(5)] The Gaussian densities of $\mathcal{N}_i$ in $(B_{2r}(0)\setminus B_{r}(0))\times (0,T)$ are less than 3/2. Thus no triple junctions can cross the annulus, and $\mathcal{N}_i$ has exactly one triple junction in $B_{r}(0)\times (0,\min\{T_i,T\})$. 
\end{enumerate}

We claim that $\mathcal{N}^i$ is smooth in $B_{r}(0)\times (0,T)$ and has curvature bounded by $C/t^{1/2}$: Assume $T_i<T$. We first note that \cite[Lemma 8.5]{IlmanenNevesSchulze14} directly generalises to arbitrary codimension. Since $\mathcal{N}_i$ only has one triple junction in $B_{r}(0)\times (0,\min\{T_i,T\})$ and no closed loops, it implies that any tangent flow at $(x,T_i)$, where $x\in B_r(0)$, is a smooth, self-similarly shrinking network without closed loops and at most one triple junction. Since $\mathcal{N}_i$ is $Y$-regular, Lemma \ref{network-tangent-flow-lemma} together with $(4)$ implies that $\mathcal{N}_i$ is smooth in a neighbourhood of $(x,T_i)$. Thus $T_i\geq T$. 

One can similarly use Lemma \ref{network-tangent-flow-lemma} to check that the proof of \cite[Theorem 8.8]{IlmanenNevesSchulze14} directly generalises to show that the curvature is also bounded by $C/t^{1/2}$ in $B_{r}(0)\times (0,T)$.

Extracting a subsequential limit as $i \ra \infty$ one obtains a $Y$-regular flow $\mathcal{N}$ which is smooth for $0<t<T$ such that $N_t \rightarrow N_0$ as radon measures. The convergence in $C^\infty$ away from the triple junctions and in $C^1$ in a neighbourhood of the regular triple junctions follows as in the proof of Theorem \ref{short-time-existence-theorem}. The convergence in $C^0$ at the non-regular triple point at the origin follows from the approximation, since the bound on the curvature implies that the speed of the triple junction is also bounded by $C/t^{1/2}$.
\end{proof}

\begin{appendix}
\section{Initial regularity}
We verify that any local solution to Brakke flow, which can be written as the graph of a $C^1$-function and has smooth initial data, is actually locally smooth and attains its initial data smoothly. We work again with parabolic $C^{q,\alpha}$ spaces. 

\begin{theorem}\label{thm:initial_smooth.1} Let $U:=B_1(0)\subset \R^n$ and assume that 
$$ u:U\times[0,1) \rightarrow \R^k$$
is $C^{1,\alpha}$ and that $\text{graph}(u)$ defines a unit density Brakke flow. If $u(\cdot,0):U\rightarrow \R^k$ is smooth, then $u:U\times[0,b) \rightarrow \R^k$ is smooth.
\end{theorem}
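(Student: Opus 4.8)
The plan is to show that a $C^{1,\alpha}$ solution of the graphical mean curvature flow system with smooth initial data is in fact smooth, by a bootstrap argument based on parabolic Schauder theory. The key observation is that $\operatorname{graph}(u)$ moving by mean curvature flow is equivalent, in the graphical gauge, to $u=(u_1,\dots,u_k)$ solving a quasilinear parabolic system of the form
\[
\partial_t u_l = a^{pq}(Du)\,\partial_p\partial_q u_l, \qquad l=1,\dots,k,
\]
where $a^{pq}(Du)=g^{pq}$, $g_{pq}=\delta_{pq}+\sum_m\partial_p u_m\,\partial_q u_m$ is the induced metric, and the coefficients $a^{pq}$ are real-analytic functions of $Du$. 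Since $u\in C^{1,\alpha}$, the coefficients $a^{pq}(Du)$ are a priori only $C^{0,\alpha}$ in space-time, so one cannot immediately differentiate; but $C^{0,\alpha}$ coefficients already put us in the range of interior parabolic Schauder estimates for linear equations. Before this the main point is to justify that a $C^{1,\alpha}$ solution in the sense of Brakke flow is a genuine (strong, or at least distributional-then-classical) solution of this system: here I would use that a $C^{1,\alpha}$ graph defines a multiplicity-one varifold flow whose first variation is controlled, so the Brakke inequality together with the unit-density hypothesis forces the flow to satisfy Brakke's equation with equality, i.e.\ the graph moves by its mean curvature vector in the distributional sense, which is exactly the weak form of the system above.

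Next I would carry out the interior bootstrap. Viewing $u$ as a solution of the \emph{linear} system $\partial_t u_l - a^{pq}(x,t)\,\partial_p\partial_q u_l = 0$ with $a^{pq}\in C^{0,\alpha}$, interior parabolic Schauder estimates give $u\in C^{2,\alpha}_{\mathrm{loc}}$ on $U\times(0,1)$. Then $a^{pq}(Du)\in C^{1,\alpha}_{\mathrm{loc}}$, and differentiating the system in a spatial direction $x_s$ shows that $v:=\partial_s u$ solves a linear system with $C^{1,\alpha}$ coefficients and a $C^{0,\alpha}$ (indeed $C^{\alpha}$, coming from $\partial_s a^{pq}\cdot\partial_p\partial_q u$) inhomogeneous term, so Schauder again gives $v\in C^{2,\alpha}_{\mathrm{loc}}$, i.e.\ $u\in C^{3,\alpha}_{\mathrm{loc}}$; iterating yields $u\in C^\infty(U\times(0,1))$. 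This part is entirely standard.

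The substantive part of the statement is regularity \emph{up to the initial time} $t=0$, since we are asked to conclude $u$ is smooth on $U\times[0,b)$ for some $b>0$. Here I would use that $u(\cdot,0)$ is smooth, hence extends to a smooth solution $\bar u$ of the graphical system on a short time interval $U'\times[0,b)$ (short-time existence for the quasilinear parabolic system with smooth initial data), and then argue uniqueness: any $C^{1,\alpha}$ solution of the system with the same initial data must agree with $\bar u$. Uniqueness is subtle because of the low regularity of the coefficients, but one can obtain it by an energy estimate — write the equation for the difference $w=u-\bar u$, note $w(\cdot,0)=0$, use that $a^{pq}(Du)-a^{pq}(D\bar u)$ is controlled by $|Dw|$ with the $C^{1,\alpha}$ bound supplying a Lipschitz constant, and derive a Gronwall-type inequality for $\int |w|^2$ (or $\int|Dw|^2$) forcing $w\equiv 0$. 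An alternative, perhaps cleaner, route is to invoke parabolic Schauder estimates \emph{up to $t=0$} with smooth initial data directly: once one knows $u\in C^{2,\alpha}$ locally for $t>0$, a barrier/compactness argument combined with the boundary Schauder estimate on $U\times[0,b)$ gives uniform $C^{2,\alpha}$ control as $t\to 0$, after which the interior bootstrap extends to the closed slab. I expect the main obstacle to be exactly this initial-time matching: transferring the a priori interior estimates (which degenerate as $t\to 0$) to estimates up to $t=0$, which requires either a quantitative uniqueness statement for $C^{1,\alpha}$ solutions of the quasilinear system or a careful application of boundary parabolic Schauder theory at the initial slice. Everything else is routine bootstrapping.
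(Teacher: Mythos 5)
Your interior bootstrap is fine and is essentially the same as what the paper does after establishing $C^{2,\alpha}$; the real content of the theorem is, as you correctly identify, the regularity \emph{up to $t=0$}, and that is where your proposal has a gap. The paper handles the initial time by a trick you do not consider: it \emph{extends $u$ backward in time}, setting $u(x,t) = u(x,0) + t\,Q(u(x,0),Du(x,0),D^2u(x,0))$ for $t<0$, so that the extended graph is a Brakke flow with a $C^{0,\alpha}$ transport term $f$ (supported in $t\le 0$ and vanishing on $t=0$, hence Hölder across the initial slice). This turns $t=0$ into an interior time, after which Tonegawa's $C^{2,\alpha}$ regularity theorem for Brakke flows with transport term applies directly on $U\times(-1,1)$, and the standard bootstrap finishes the job. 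No boundary Schauder theory and no uniqueness argument are needed.

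Both of your proposed routes for the initial-time matching are incomplete as stated. For Route 1 (short-time existence plus Gronwall uniqueness): since the paper's $C^{1,\alpha}$ is a \emph{parabolic} Hölder space, $\partial_t u$ is not even known to exist classically up to $t=0$, so you cannot write down the pointwise equation for $w=u-\bar u$ near $t=0$; you would have to run the energy argument at the level of the Brakke (integral varifold) inequality, and you would also need to construct the comparison solution $\bar u$ with boundary data that is only $C^{1,\alpha}$ on the lateral boundary, which itself requires an argument. For Route 2 (boundary Schauder with a compactness/barrier argument): you never explain in what sense $u$ solves the linear equation $\partial_t u = a^{pq}\partial_p\partial_q u$ \emph{up to} $t=0$, which is a prerequisite for any Schauder estimate with initial data; interior estimates on $U\times(0,1)$ degenerate as $t\to 0$, and compactness alone does not upgrade this to a statement on the closed slab. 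These gaps are precisely what the paper's backward-extension device circumvents, by converting the initial-time problem into an interior one for a flow-with-forcing where Tonegawa's theorem applies. Your plan would need to be substantially fleshed out at this point, whereas the paper's argument is short and self-contained modulo the cited regularity theorem.
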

\begin{proof}
 Let the nonparametric equation for mean curvature flow be given by
 $$\partial_t u = Q(u,Du,Du^2) .$$
 We extend $u$ to $U\times (-1,1)$ by setting
 $$u(x,t) = u(x,0) + t Q(u(x,0), Du(x,0), D^2u(x,0))$$
 for $t<0$. Note that since $u(\cdot, 0)$ is smooth, $u$ is smooth on $U\times(-1,0]$ and $C^{1,\alpha}$ on $U\times(-1,1)$. The graph of $u(\cdot, t)$ does not anymore constitute a Brakke flow for $t\in (-1,1)$, but it does so with transport term as follows: For $x \in \R^{n+k}$ write $x=(x',x'')$ where $x'\in \R^n, x'' \in \R^k$ and let $ f:U\times (-1,1) \rightarrow \R^k$ be defined by
 \begin{equation*}
f(x,t) = 
Q(u(x',0), Du(x',0), D^2u(x',0)) - Q(u(x',t), Du(x',t), D^2u(x',t))
\end{equation*}
for $t\leq 0$ and $f(x,t) =0$ otherwise. Note that $f$ is $C^{0,\alpha}$ on $U\times(-1,1)$. Then $\text{graph}(u)$ constitutes a Brakke flow with transport term as considered in \cite{Tonegawa14}. We can thus apply \cite[Theorem 6.3]{Tonegawa14} (with $g=0$) to see that $u$ is $C^{2,\alpha}$ on $U\times(-1,1)$. Therefore $u$ is $C^{2,\alpha}$ on $U\times [0,1)$. Now by standard parabolic PDE theory, $u$ is smooth on $U\times [0,1)$.
\end{proof}

\begin{remark} The same proof shows that if the assumption $u(\cdot,0)$ is smooth is relaxed to $u(\cdot,0)$ is in $C^{q, \alpha}$ for $q\geq 4$, then $u$ is $C^{q,\alpha}$ on $U\times[0,1)$. 
 \end{remark}
 
 \begin{corollary} \label{thm:initial_smooth.2} Let $U:=B_1(0)\subset \R^n$ and assume that 
$$ u:U\times[0,1) \rightarrow \R^k$$
is $C^{1}(U\times [0,1)) \cap C^\infty(U\times(0,1))$ and that $\text{graph}(u)$ defines a unit density Brakke flow. If $u(\cdot,0):U\rightarrow \R^k$ is smooth, then $u:U\times[0,b) \rightarrow \R^k$ is smooth.
\end{corollary}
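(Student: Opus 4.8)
The plan is to reduce Corollary \ref{thm:initial_smooth.2} to Theorem \ref{thm:initial_smooth.1} by producing a single function that is globally $C^{1,\alpha}$ on $U\times[0,1)$, so that the hypotheses of the theorem are met. The only gap between the two statements is regularity: the corollary only assumes $u\in C^1(U\times[0,1))\cap C^\infty(U\times(0,1))$, whereas the theorem wants $C^{1,\alpha}$ up to the initial time. First I would observe that the conclusion is local in the spatial variable and interior in $t>0$ automatically by the smoothness hypothesis, so it suffices to prove smoothness in a neighbourhood of each point $(x_0,0)$ with $x_0\in U$; after a dilation and translation we may assume $x_0=0$ and work on a slightly smaller ball.

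The key step is to upgrade the $C^1$-in-time-up-to-$0$ hypothesis to $C^{1,\alpha}$. To do this I would use interior parabolic estimates for the nonparametric mean curvature flow equation $\partial_t u = Q(u,Du,D^2u)$, which holds on $U\times(0,1)$ where $u$ is smooth. Since $u(\cdot,0)$ is smooth and $u$ is continuous up to $t=0$ in $C^1$, one has a uniform $C^1$ bound on $U\times[0,1)$; feeding this into the Schauder/Krylov-Safonov machinery for the quasilinear parabolic equation on time slabs $(s,1)$ and then letting $s\searrow 0$, together with the continuity of $u$ and $Du$ at $t=0$ coming from the $C^1$ hypothesis, yields that $u$ is in fact $C^{1,\alpha}$ on $U'\times[0,1)$ for a slightly smaller ball $U'$ and some $\alpha\in(0,1)$. (Alternatively, one can argue purely by comparison: since $u(\cdot,0)$ is smooth, the static extension $\tilde u(x,t)=u(x,0)+tQ(u(x,0),Du(x,0),D^2u(x,0))$ used in the proof of Theorem~\ref{thm:initial_smooth.1} is smooth up to $t=0$, and the $C^1$-closeness of $u$ to its initial data forces $u-\tilde u$ to be small in a way that, combined with interior estimates, gives the H\"older bound on the time derivative near $t=0$.)

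Once $u$ is known to be $C^{1,\alpha}$ on $U'\times[0,1)$ with $u(\cdot,0)$ smooth, one applies Theorem \ref{thm:initial_smooth.1} verbatim on $U'\times[0,1)$ to conclude that $u$ is smooth on $U'\times[0,1)$, hence in particular smooth in a neighbourhood of $(0,0)$. Since $x_0$ was arbitrary, $u$ is smooth on $U\times[0,b)$ for the appropriate $b$, which is the claim.

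The main obstacle is the regularity upgrade from $C^1$ to $C^{1,\alpha}$ up to the initial time: away from $t=0$ this is immediate, but the $C^1$ hypothesis at $t=0$ is borderline for directly invoking standard parabolic Schauder theory, which typically wants a H\"older modulus of continuity for the gradient in order to bootstrap. The cleanest resolution is probably to exploit the smoothness of the initial data together with the already-known interior smoothness and a barrier/comparison argument to control $\partial_t u$ near $t=0$, rather than trying to squeeze the estimate out of the $C^1$ hypothesis alone; once that is in hand, everything else is a direct appeal to Theorem \ref{thm:initial_smooth.1}.
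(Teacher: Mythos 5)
Your overall reduction is correct and matches the paper: upgrade the regularity of $u$ up to $t=0$ from $C^1$ to a H\"older class and then invoke Theorem~\ref{thm:initial_smooth.1}. But the first mechanism you propose for that upgrade would not work. Interior parabolic Schauder and Krylov--Safonov estimates on the slabs $U\times(s,1)$ give bounds only at parabolic distance $\gtrsim \delta$ from $\{t=s\}$, with constants that blow up as $\delta\to 0$; letting $s\searrow 0$ therefore does \emph{not} yield a uniform $C^{1,\alpha}$ bound on $U'\times[0,1)$. The $C^1$ hypothesis at $t=0$ supplies a modulus of continuity but not a H\"older one, which is exactly the obstruction you later identify yourself. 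So that paragraph, as written, is a gap rather than a proof.

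Your alternative -- exploiting the smooth initial data and a barrier/comparison argument to control the flow near $t=0$ -- is the correct route and is in fact what the paper does: it observes that shrinking balls of radius $R(t)=(R_0-2nt)$ remain barriers in higher codimension and adapts the barrier argument of Edelen (Section 9.11 of \cite{Edelen16}) to conclude $u\in C^{1,1}(U\times[0,1))$, which is even stronger than the $C^{1,\alpha}$ you asked for; Theorem~\ref{thm:initial_smooth.1} then applies directly. Your instinct to prefer the barrier/comparison method over squeezing Schauder estimates out of a bare $C^1$ bound is exactly right, and you should commit to it rather than presenting the slab-Schauder argument as a primary option. As it stands, the proposal lands on the paper's approach but only after first offering a step that would fail.
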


\begin{proof} To apply Theorem \ref{thm:initial_smooth.1} we need to show that $u$ is $C^{1,\alpha}$ on $U\times[0,1)$. First note that for higher codimension mean curvature flow, balls of radius $R(t) = (R_0 - 2n t)$ still act as barriers. Thus we can adapt the final barrier argument in Section 9.11 in \cite{Edelen16} to show that $u \in C^{1,1}(U\times [0,1))$. 
\end{proof}

\begin{remark}It suffices to assume that $u\in C^1(U\times [0,1))$ since Brakke's local regularity theorem \cite{Brakke} (or alternatively \cite{Tonegawa14}) implies that $u$ is smooth for $t>0$. By the proof of Proposition \ref{initial-Y-lemma} this even extends to unit regular Brakke flows: the initial surface is attained locally in $C^1$ and thus the higher initial regularity extends. 
\end{remark}

\end{appendix}


\providecommand{\bysame}{\leavevmode\hbox to3em{\hrulefill}\thinspace}
\providecommand{\MR}{\relax\ifhmode\unskip\space\fi MR }
\providecommand{\MRhref}[2]{%
  \href{http://www.ams.org/mathscinet-getitem?mr=#1}{#2}
}
\providecommand{\href}[2]{#2}

\end{document}